\DeclareMathOperator{\lspan}{span}                          
\DeclareMathOperator{\conv}{conv}                           
\DeclareMathOperator{\supp}{supp}                           
\DeclareMathOperator{\diam}{diam}                           
\DeclareMathOperator{\Lip}{Lip}                             
\newcommand{\NN}{\mathbb{N}}                                
\newcommand{\ZZ}{\mathbb{Z}}                                
\newcommand{\RR}{\mathbb{R}}                                
\newcommand{\ep}{\varepsilon}
\newcommand{\abs}[1]{\left|{#1}\right|}                     
\newcommand{\pare}[1]{\left({#1}\right)}                    
\newcommand{\set}[1]{\left\{{#1}\right\}}                   
\newcommand{\norm}[1]{\left\|{#1}\right\|}                  
\newcommand{\duality}[1]{\left<{#1}\right>}                 
\newcommand{\cl}[1]{\overline{#1}}                          
\newcommand{\restrict}{\mathord{\upharpoonright}}           
\newcommand{\compl}{\stackrel{c}{\hookrightarrow}}          
\newcommand{\lipfree}[1]{\mathcal{F}({#1})}                 
\newcommand{\lipnorm}[1]{\norm{#1}_L}                       
\renewcommand{\leq}{\leqslant}
\renewcommand{\geq}{\geqslant}
\theoremstyle{plain}
\newtheorem{theorem}{Theorem}[section]
\newtheorem{lemma}[theorem]{Lemma}
\newtheorem{proposition}[theorem]{Proposition}
\newtheorem*{claim*}{Claim}
\newtheorem{fact}[theorem]{Fact}
\newtheorem{question}{Question}
\theoremstyle{definition}
\newtheorem*{definition*}{Definition}
\newtheorem{definition}[theorem]{Definition}
\theoremstyle{remark}
\newtheorem{remark}[theorem]{Remark}
\begin{document}
\title{Lipschitz extension and Lipschitz-free spaces over nets in normed spaces}

\author[R. J. Aliaga]{Ram\'on J. Aliaga}
\address[R. J. Aliaga]{Instituto Universitario de Matem\'atica Pura y Aplicada,
Universitat Polit\`ecnica de Val\`encia,
Camino de Vera S/N,
46022 Valencia, Spain}
\email{ramon.aliaga@upv.es}

\author[R. Medina]{Rub\'en Medina}
\address[R. Medina]{Universidad P\'ublica de Navarra (ETSIIIT). Departamento de Estad\'istica, Inform\'atica y Matem\'aticas. Institute for Advanced Materials and Mathematics. 31009-Navarra (Spain). \newline
\href{https://orcid.org/0000-0002-4925-0057}{ORCID: \texttt{0000-0002-4925-0057}}}
\email{ruben.medina@unavarra.es}

\begin{abstract}
We consider subsets $S$ of a metric space $M$ such that Lipschitz mappings defined on $S$ can be extended to Lipschitz mappings on $M$,
and we show that the union of such subsets has the same property under appropriate geometric conditions.
We then derive several consequences to the isomorphic structure and classification of Lipschitz and Lipschitz-free spaces.
Our main result is that the Lipschitz-free space $\mathcal{F}(M)$ is isomorphic to its countable $\ell_1$-sum when $M$ is either a net $N_X$ in any Banach space $X$ or the integer grid $\mathbb{Z}_{\ell_1}$ in $\ell_1$. We also prove that the Lipschitz space $\mathrm{Lip}_0(\mathbb{Z}_{\ell_1})$ is isomorphic to $\mathrm{Lip}_0(\ell_1)$ and that $\mathrm{Lip}_0(N_X)$ contains a complemented copy of $\mathrm{Lip}_0(X)$, among other results.
This answers questions raised by Albiac, Ansorena, C\'uth and Doucha and Candido, C\'uth and Doucha, respectively, and extends previous results by the same authors as well as H\'ajek and Novotn\'y. 
\end{abstract}

\subjclass[2020]{Primary 26A16, 46B20, 54C20}

\keywords{Lipschitz function, extension operator, Lipschitz-free space, net, homogeneous metric space.}

\maketitle

\section{Introduction}

\subsection{Motivation and background}

The Lipschitz extension problem has received substantial attention from researchers in Metric Geometry and Computer Science in recent years (see, for instance, \cite{BrudnyiBrudnyi07, LN5, Naor24} and references therein). Specifically, given a metric space $M$, a subset $S$ of $M$ and a real Banach space $X$, one wonders about the existence of a constant $L\geq1$ such that every 1-Lipschitz mapping $f:S\to X$ can be extended to an $L$-Lipschitz mapping $F:M\to X$. Of particular interest is the case where $M$ and $S$ are fixed and one looks for the least extension constant $L$ that works for any range space $X$. This constant is represented by $e(S,M)$\footnote{We warn the reader that this notation is not universal. For instance, other works such as \cite{LN5} use the notation $e(M,X)$ for the optimal extension constant when $X$ is fixed and $S$ runs through the subsets of $M$. Our convention is more consistent with the notation $\lambda(S,M)$ used e.g. in \cite{BrudnyiBrudnyi08}.} and turns out to be the smallest norm of a pointwise continuous linear extension operator $E:\Lip_0(S)\to\Lip_0(M)$ (see Definition \ref{def:lip extendable} and Fact \ref{clipfact}). Here, $\Lip_0(M)$ denotes the Banach space of Lipschitz functions $M\to\RR$ that vanish at a prescribed point. If the extension operator is not required to be pointwise continuous, its smallest norm is denoted $\lambda(S,M)$ instead (see e.g. \cite{BrudnyiBrudnyi08}).

Our main interest in this work is the isomorphic classification of Lipschitz spaces $\Lip_0(M)$ and their preduals, the Lipschitz-free spaces denoted by $\lipfree{M}$. Many of the existing results and techniques are based on obtaining isomorphisms by means of Pe\l czy\'nski's decomposition method (see for instance \cite{CandidoKaufmann1,CandidoKaufmann2,GP,Gartland25,Kaufmann}), which involves determining complementability relations as well as spaces isomorphic to their countable $\ell_p$-sums (where $p=1$ for Lipschitz-free spaces and $p=\infty$ for Lipschitz spaces).
Lipschitz-free spaces isomorphic to their infinite $\ell_1$-sums and Lipschitz spaces isomorphic to their infinite $\ell_\infty$-sums have been thoroughly studied in the last decade (see, for instance, \cite{AACD3,CCD,CG23,HajekNovotny}). On the other hand, the search for complementability relations is intimately connected to the study of Lipschitz extension constants. Indeed, the existence of a pointwise continuous extension operator $E:\Lip_0(S)\to\Lip_0(M)$ is equivalent to the existence of a projection from $\lipfree{M}$ onto $\lipfree{S}$, with the same norm as $E$ (see Fact \ref{clipfact}).

Many of the recent efforts in the study of extension constants focus on estimating the global constants $$e(M)=\sup\set{e(S,M):S\subset M}$$ and $\lambda(M)$ (defined analogously) for different classes of metric spaces $M$ (see, for instance, \cite{Naor15,Naor24}). However, these constants may be infinite in general. This fact limits their applicability to the study of the structure of Lipschitz(-free) spaces to the cases where $e(M)<\infty$, and the main examples where this holds correspond to spaces $M$ that are ``finite-dimensional'' in some sense, for instance doubling spaces \cite{BDMS,LN5}. In order to overcome this issue, we deal with the full constants $e(S,M)$ and consider the family of subsets $S\subset M$ for which $e(S,M)$ or $\lambda(S,M)$ are finite, which we call \emph{(weak$^*$) Lipschitz extendable subsets}. Our main observation, from which most of our results are derived, is that Lipschitz extendability is sometimes stable under taking unions, provided that some natural geometric restrictions are satisfied.

\subsection{Summary of results}

Section \ref{extensions} is devoted to the Lipschitz extension problem. We review its relation to the structure of Lipschitz and Lipschitz-free spaces and we state and prove our main technical tool, Theorem \ref{th:lip extendable infinite union}. It provides sufficient geometric conditions on an infinite family of Lipschitz extendable subsets of a metric space so that the union is also Lipschitz extendable. In a nutshell, this is possible when the sets are separated enough, not too big, and Lipschitz extendable with a uniform constant.

In Section \ref{applications}, we apply Theorem \ref{th:lip extendable infinite union} and the techniques behind it to establish isomorphism and complementation between certain Lipschitz and Lipschitz-free spaces. Many of these relations provide full or partial solutions to various questions raised in the literature.

First, we consider homogeneous metric spaces. Homogeneity allows us to move sets around in order to fulfill the conditions required to apply Theorem \ref{th:lip extendable infinite union}. Combining this with Kalton's decomposition theorem \cite{Kalton}, we prove that $\lipfree{M}$ is isomorphic to its $\ell_1$-sum if the metric space $M$ is homogeneous and unbounded and its closed balls are uniformly Lipschitz extendable to $M$ (Theorem \ref{th:homogeneous unbounded extendable}).

Next, we consider the case where the metric space is a net $N_X$ in a Banach space $X$. Lipschitz-free spaces over nets are targets of study for researchers in both Metric Geometry and in Functional Analysis, as they are intimately connected to the coarse Lipschitz geometry of the space (see, for instance, \cite{CDW16, HM1, HM2, HajekNovotny}).
In this context, Albiac et al asked in \cite[Question 6.5]{AACD3} whether $\lipfree{N_X}$ is always isomorphic to its $\ell_1$-sum. The dual version of the question (i.e. whether $\Lip_0(N_X)$ is isomorphic to its $\ell_\infty$-sum) was also raised by Candido, C\'uth and Doucha in \cite[Question 4]{CCD}. Partial solutions are known: H\'ajek and Novotn\'y gave a positive answer for nets in certain classes of Banach spaces with Schauder bases in 2017 \cite[Theorem 8]{HajekNovotny}, whereas  Albiac et al solved it for nets in doubling spaces in 2021 \cite[Corollary 5.10]{AACD3}. We give a full affirmative answer in Theorem \ref{th:net l1 sum} by reducing the problem to the homogeneous case.

We also consider the relation between the Lipschitz spaces $\Lip_0(N_X)$ and $\Lip_0(X)$. It is asked in \cite[Question 3]{CCD} whether these spaces are always isomorphic, by analogy with the one-dimensional case where $\Lip_0(\RR)=L_\infty$ and $\Lip_0(\ZZ)=\ell_\infty$. The authors of \cite{CCD} provide a positive answer for finite-dimensional $X$. Here, we use Theorem \ref{th:lip extendable infinite union} to provide a partial solution to the problem by showing that $\Lip_0(X)$ is isomorphic to a complemented subspace of $\Lip_0(N_X)$ (Theorem \ref{th:lip net complemented}). With this knowledge, \cite[Question 3]{CCD} is now equivalent to the following question, i.e. whether the converse of Theorem \ref{th:lip net complemented} holds as well.

\begin{question}\label{complequest}
    Is it true that, for every Banach space $X$ and every net $N_X$ of $X$, the space $\Lip_0(N_X)$ is isomorphic to a complemented subspace of $\Lip_0(X)$?
\end{question}

\noindent A natural way to tackle this question would be to try to find a Lipschitz extension operator $E:\Lip_0(N_X)\to\Lip_0(X)$, but this is known to not always be possible \cite{Naor15}. In fact, if $\dim(X)=n$, Bourgain showed that almost extensions of Lipschitz maps from an $(\varepsilon,\varepsilon)$-net in the unit sphere of $X$ are possible but they incur an error that grows linearly with $n$ \cite{Bou86}. Moreover, Naor showed in \cite{Naor21} that this error bound is almost sharp in certain cases. Hence, finding a complemented copy of $\Lip_0(N_X)$ inside $\Lip_0(X)$ may be a nontrivial task.

The closest we are to a positive solution of \cite[Question 3]{CCD}, or equivalently Question \ref{complequest}, is our Theorem \ref{th:grid ell1}, where we prove that the Lipschitz space $\Lip_0(\ZZ_{\ell_1})$ over the integer grid of $\ell_1$ is isomorphic to $\Lip_0(\ell_1)$. We remark that the grid is not a net in $\ell_1$, so we still do not know of any infinite-dimensional Banach space for which Question \ref{complequest} is answered affirmatively. We also show in Theorem \ref{th:gridsum} that the Lipschitz-free space $\lipfree{\ZZ_{\ell_1}}$ is isomorphic to its countable $\ell_1$-sum by means of a specially tailored version of our main tool Theorem \ref{th:lip extendable infinite union} (see Lemma \ref{lm:lemma_extension_partition_ell1}). This answers a question raised in \cite[p. 2717]{CCD}.

Our next result, Theorem \ref{th:compactfinal}, uses a similar technique to show that if $X$ is a Banach space with a finite-dimensional decomposition then there is a compact, convex and generating subset $K$ of $X$ for which $\Lip_0(X)$ is isomorphic to $\Lip_0(K)$. This line of research initiated in \cite{GP}, where Garc\'ia-Lirola and Proch\'azka asked whether the Lipschitz-free space over a separable Banach space $X$ is always isomorphic to the Lipschitz-free space over some compact metric space, preferably a subset of $X$, and gave one example (Pe\l czy\'nski's universal basis space) for which this holds. It continued with the work of Basset \cite{Basset}, where the author gives the first examples of separable metric spaces $M$ (not Banach spaces) such that $\lipfree{M}$ is not isomorphic to the Lipschitz-free space over any compact space. The Banach space case is still open, and our result provides a partial affirmative answer.

We finish our paper by addressing a question posed by the first named author in \cite[Question 5]{Aliaga25} and showing that, for $n,m\in\NN$, the Lipschitz-free space $\lipfree{\RR^n\times\ZZ^m}$ is isomorphic to $\lipfree{\RR^n}\oplus\lipfree{\ZZ^{n+m}}$ (Proposition \ref{prop:bundles}).

\subsection{Definitions and preliminaries}

We will only consider real scalars throughout this document. For any undefined notions related to Banach spaces or Lipschitz spaces we refer to \cite{Banach} or \cite{Weaver2}, respectively.

The symbol $X$ will be reserved for Banach spaces, and the closed unit ball of $X$ will be denoted by $B_X$. By ``operator'' we mean a bounded linear operator between Banach spaces, and similarly for ``projection''. Given Banach spaces $X,Y$, we will write $X\equiv Y$ if they are linearly isometric, $X\sim Y$ if they are linearly isomorphic, and $X\compl Y$ if $Y$ contains a complemented subspace that is isomorphic to $X$. For $p\in\set{1,\infty}$, the $\ell_p$-sum of a sequence $(X_n)$ of Banach spaces will be denoted by $\pare{\bigoplus_{n\in\NN}X_n}_p$. If all $X_n=X$ are equal, we call this the countable $\ell_p$-sum of $X$ and denote it simply by $\pare{\bigoplus_\NN X}_p$. Many of our results will be ultimately based on Pe\l czy\'nski's decomposition theorem, or ``method'', in the following form: \textit{if $X\compl Y\compl X$ and $X\sim\pare{\bigoplus_\NN X}_p$ for $p\in\set{1,\infty}$ then $X\sim Y$}.

The symbol $M$ will stand for a metric space with metric $d$ (Banach spaces will be regarded as metric spaces endowed with the norm metric). Usually, we will assume that $M$ is pointed, that is, an arbitrary point $0\in M$ has been designated as a base point. The closed ball centered at $x\in M$ with radius $r>0$ will be denoted $B(x,r)$ or $B_M(x,r)$. We will consider the vector space $\Lip(M)$ of all Lipschitz functions $f:M\to\RR$, and the \emph{Lipschitz space} $\Lip_0(M)$ consisting of all $f\in\Lip(M)$ such that $f(0)=0$. The best Lipschitz constant of $f$, given by
$$
\lipnorm{f} = \sup\set{\frac{f(x)-f(y)}{d(x,y)} \,:\, x\neq y\in M} ,
$$
is a complete seminorm on $\Lip(M)$ and a complete norm on $\Lip_0(M)$. For any $x\in M$, we consider the evaluation functional $\delta(x)\in\Lip_0(M)^*$ taking $f\in\Lip_0(M)$ to $f(x)$. The space
$$
\lipfree{M} = \cl{\lspan}\,\set{\delta(x) \,:\, x\in M} \subset \Lip_0(M)^*
$$
is called the \emph{Lipschitz-free space} over $M$, and is in fact an isometric predual of $\Lip_0(M)$. The weak$^*$ topology induced by $\lipfree{M}$ agrees on $B_{\Lip_0(M)}$ with the topology of pointwise convergence. Thus, by the Banach-Dieudonn\'e theorem, an operator on, or to, $\Lip_0(M)$ is weak$^*$ continuous if and only if it is pointwise continuous.

The mapping $\delta:M\to\lipfree{M}$ is an isometric embedding. Lipschitz-free spaces satisfy the following linearization property: any Lipschitz map $\varphi:M\to X$ such that $\varphi(0)=0$ can be extended to an operator $\Phi:\lipfree{M}\to X$, in the sense that $\Phi(\delta(x))=\varphi(x)$ for all $x\in M$. Moreover, its norm $\norm{\Phi}$ equals the Lipschitz constant of $\varphi$. Because of this, $\lipfree{M}$ can be regarded as a canonical linearization of the metric space $M$. As a consequence, if two metric spaces $M_1$, $M_2$ are bi-Lipschitz equivalent with distortion $D\geq 1$, i.e. there exist a bijection $\varphi:M_1\to M_2$ and $c>0$ such that
$$
c\cdot d(\varphi(x),\varphi(y))\leq d(x,y)\leq cD\cdot d(\varphi(x),\varphi(y))
$$
for all $x,y\in M_1$, then $\lipfree{M_1}$ and $\lipfree{M_2}$ are $D$-isomorphic. In particular, isometric metric spaces have isometric Lipschitz-free spaces. Lipschitz-free spaces are also isometrically invariant with respect to changes of base point.

We will occasionally use the following product rule for Lipschitz constants:
\begin{equation}\label{eq:product rule}
\lipnorm{f\cdot g} \leq \lipnorm{f}\sup_{x\in\supp(f)}\abs{g(x)}+\lipnorm{g}\sup_{x\in\supp(g)}\abs{f(x)}
\end{equation}
for $f,g\in\Lip(M)$. It can be proved in a similar way as e.g. \cite[Proposition 1.30]{Weaver2}.

Throughout Section \ref{applications}, we will also need to use a number of auxiliary results on isomorphism or complementation relations between different Lipschitz and Lipschitz-free spaces that are now standard tools in the field. We list them all now, mostly without proof.

The first fact, due to C\'uth, Doucha and Wojtaszczyk \cite{CDW16}, is that every infinite-dimensional Lipschitz-free space contains a complemented copy of $\ell_1$.

\begin{lemma}[C\'uth, Doucha, Wojtaszczyk]\label{lm:cdw}
For any infinite metric space $M$, $\ell_1\compl\lipfree{M}$.
\end{lemma}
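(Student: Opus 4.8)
The plan is to produce a complemented copy of $\ell_1$ inside $\lipfree{M}$ for any infinite metric space $M$. The natural strategy is to exhibit a sequence of points in $M$ whose evaluation differences $\delta(x_n)-\delta(y_n)$ span an isomorphic copy of $\ell_1$, and then to build a bounded projection onto this copy. First I would distinguish cases based on the metric structure of $M$: since $M$ is infinite, either it is unbounded, or it is bounded and contains a sequence of points that accumulate somewhere or remain mutually separated. In each case the goal is to extract an infinite sequence of pairs $(x_n,y_n)$ that are ``metrically independent'' in a quantitative way.

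The key geometric extraction is the following. I would try to find points $x_n,y_n\in M$ together with a scale $r_n>0$ such that the pairs are well separated at their own scale, meaning $d(x_n,y_n)\approx r_n$, while different pairs live at very different or very distant scales, so that $d(x_n,y_m)$ is large compared to $\min(r_n,r_m)$ whenever $n\neq m$. Concretely one wants the normalized molecules $u_n = (\delta(x_n)-\delta(y_n))/d(x_n,y_n)$ to behave like the unit vector basis of $\ell_1$. The lower $\ell_1$-estimate for any finitely supported combination $\sum_n a_n u_n$ is automatic in $\lipfree{M}$ because molecules always satisfy $\norm{\sum_n a_n u_n}\leq \sum_n \abs{a_n}$; the content is the reverse inequality and the existence of a projection, both of which follow if I can construct suitable Lipschitz functions.

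For the projection, I would build, for each $n$, a real-valued Lipschitz function $f_n\in\Lip_0(M)$ that ``peaks'' on the pair $(x_n,y_n)$ and is essentially flat on all other pairs: ideally $\duality{u_m,f_n}=\delta_{nm}$ (Kronecker delta, after scaling) with the family $(f_n)$ uniformly bounded in Lipschitz norm and with disjoint or nearly disjoint supports. The separation of scales lets one take each $f_n$ to be a small bump supported near the pair $(x_n,y_n)$, for instance a truncated distance function like $f_n(z) = \max(0, r_n/2 - d(z,x_n))$ suitably normalized and sign-adjusted, cut off so that it vanishes outside a ball that avoids every other pair. Because the supports are essentially disjoint and each function has controlled Lipschitz constant, the map $\Phi(\mu) = \sum_n \duality{f_n,\mu}\, u_n$ defines a bounded operator from $\lipfree{M}$ into the closed span of the $u_n$, and one checks $\Phi(u_m)=u_m$, so $\Phi$ is a bounded projection onto $\cl{\lspan}\{u_n\}\sim\ell_1$.

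The main obstacle is the geometric extraction step: guaranteeing, for a completely arbitrary infinite metric space, a sequence of pairs with the required separation-of-scales property so that the cutoff bumps $f_n$ can be made to have disjoint supports while keeping $d(x_n,y_n)$ comparable to the cutoff radius. In a bounded space with a convergent sequence one must choose nested shrinking scales $r_n\to 0$ and argue that points can be selected along the sequence so that consecutive gaps dominate; in an unbounded space one can instead push successive pairs out to infinity. Handling both regimes uniformly — and verifying that the resulting Lipschitz functions indeed pair correctly with the molecules while remaining uniformly bounded — is the delicate part, though it is ultimately a careful but elementary separation argument rather than a deep one, which is why this result can be quoted as a standard tool.
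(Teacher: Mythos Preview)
The paper does not prove this lemma at all; it merely quotes it as a known result of C\'uth, Doucha and Wojtaszczyk \cite{CDW16} and uses it as a black box. So there is no ``paper's own proof'' to compare against.

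Your sketch is essentially the argument of the original reference: extract a sequence of well-separated pairs, show the normalized molecules are equivalent to the $\ell_1$ basis, and build a projection via biorthogonal bump functions with disjoint supports. The outline is correct and the identification of the main difficulty (the geometric extraction in an arbitrary infinite metric space) is accurate. Two small remarks. First, you have the terminology reversed: the inequality $\norm{\sum_n a_n u_n}\leq\sum_n\abs{a_n}$ is the \emph{upper} $\ell_1$-estimate (the triangle inequality, which is indeed automatic), and the nontrivial content is the \emph{lower} estimate $\norm{\sum_n a_n u_n}\geq c\sum_n\abs{a_n}$; your text calls the former the lower estimate. Second, for the boundedness of the projection $\Phi(\mu)=\sum_n\duality{f_n,\mu}u_n$ you should make explicit why disjoint supports and a uniform Lipschitz bound on the $f_n$ suffice: it is because for any bounded scalar sequence $(a_n)$ the function $\sum_n a_n f_n$ is Lipschitz with constant at most $2\sup_n\lipnorm{f_n}$ (check pairs of points lying in the same support, in different supports, or with one outside all supports, using that each $f_n$ vanishes off its support). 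This dual estimate is what gives $\sum_n\abs{\duality{f_n,\mu}}\leq C\norm{\mu}$, and hence both the lower $\ell_1$-estimate and the boundedness of $\Phi$.
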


Next is Kalton's decomposition theorem, which we shall use in the following form.

\begin{lemma}[Kalton]\label{lm:kalton decomposition}
For any metric space $M$, $\lipfree{M} \compl \pare{\bigoplus_{n\in\NN} \lipfree{B(0,2^n)}}_1$.
\end{lemma}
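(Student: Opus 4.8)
The plan is to realise $\lipfree{M}$ as a complemented subspace of $Y=\pare{\bigoplus_{n\in\NN}\lipfree{B(0,2^n)}}_1$ by exhibiting operators $T\colon\lipfree{M}\to Y$ and $S\colon Y\to\lipfree{M}$ with $S\circ T=\mathrm{id}_{\lipfree{M}}$; then $T$ is an isomorphism onto its range and $T\circ S$ is a projection of $Y$ onto a copy of $\lipfree{M}$, which is exactly the assertion $\lipfree{M}\compl Y$. For $S$ I would simply use the canonical isometric inclusions $\iota_n\colon\lipfree{B(0,2^n)}\to\lipfree{M}$ (available since $0\in B(0,2^n)\subset M$) and set $S\big((\gamma_n)_n\big)=\sum_n\iota_n(\gamma_n)$; this is bounded, with $\norm{S((\gamma_n)_n)}\le\sum_n\norm{\gamma_n}$, precisely because the sum defining $Y$ is an $\ell_1$-sum. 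All the work goes into $T$.

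To build $T$, first fix a \emph{radial partition of unity}: choose one-dimensional tent functions $\phi_n$ supported in $[2^{n-1},2^{n+1}]$ (with the lowest one modified into a plateau near $0$ so as to capture all small scales), put $\varphi_n=\phi_n(d(\cdot,0))$, and arrange that $0\le\varphi_n\le1$, $\sum_n\varphi_n\equiv1$ on $M\setminus\set{0}$, $\lipnorm{\varphi_n}\le C2^{-n}$, and $\supp\varphi_n\subset B(0,2^{n+1})$. Since $f(0)=0$ forces $(\varphi_n f)(0)=0$, multiplication $f\mapsto\varphi_n f$ maps $\Lip_0(M)$ into itself boundedly (by the product rule \eqref{eq:product rule} together with $\abs{f(x)}\le\lipnorm{f}\,d(x,0)\le 2^{n+1}\lipnorm f$ on $\supp\varphi_n$) and is pointwise continuous; its predual is a weighting operator $W_n\colon\lipfree{M}\to\lipfree{M}$ determined by $W_n\delta(x)=\varphi_n(x)\delta(x)$. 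As $\supp\varphi_n\subset B(0,2^{n+1})$, the range of $W_n$ lies in the isometric copy of $\lipfree{B(0,2^{n+1})}$ inside $\lipfree{M}$. I would then set $T\gamma=(W_n\gamma)_n$ (after a harmless re-indexing of the balls to absorb the shift from $2^{n+1}$ to $2^{n}$). Because $\sum_n\varphi_n(x)=1$, we get $S\circ T=\sum_n W_n=\mathrm{id}$ on the dense span of the $\delta(x)$, hence everywhere.

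The main obstacle is showing that $T$ actually takes values in $Y$, i.e. the $\ell_1$-estimate $\sum_n\norm{W_n\gamma}\le C\norm{\gamma}$; everything else is soft. By the definition of the free-space norm it suffices to prove $\sum_n\norm{W_n(\delta(x)-\delta(y))}\le C\,d(x,y)$ for all $x,y$. Writing $a=d(x,0)\le b=d(y,0)$, the key is the \emph{asymmetric split} through the nearer point,
$$W_n(\delta(x)-\delta(y))=\varphi_n(y)\,(\delta(x)-\delta(y))+(\varphi_n(x)-\varphi_n(y))\,\delta(x),$$
whose norm is at most $\varphi_n(y)\,d(x,y)+\abs{\varphi_n(x)-\varphi_n(y)}\,a$. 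Summing the first terms gives exactly $d(x,y)$ by the partition-of-unity property. For the second terms one must show $a\sum_n\abs{\phi_n(a)-\phi_n(b)}\le C\,d(x,y)$: here only scales with $2^{n+1}\geq a$ contribute (both $a,b$ lie beyond the support of $\phi_n$ once $2^{n+1}<a$), and on those scales the estimate $\abs{\phi_n(a)-\phi_n(b)}\le\lipnorm{\phi_n}\,\abs{a-b}\le C2^{-n}\abs{a-b}$ combined with the geometric summation $\sum_{2^n\geq a/2}2^{-n}\le C/a$ yields $a\cdot C\abs{a-b}/a=C\abs{a-b}\le C\,d(x,y)$. This is the one place where the exponential spacing of the balls $B(0,2^n)$ is essential: it is exactly what makes the Lipschitz constants summable against the weight $a$.

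Finally, the molecule estimate upgrades to $\sum_n\norm{W_n\gamma}\le C\norm{\gamma}$ on the dense subspace of finitely supported elements and hence, by continuity, on all of $\lipfree{M}$, so $T\colon\lipfree{M}\to Y$ is bounded. Together with $S\circ T=\mathrm{id}$ this makes $T$ an isomorphic embedding whose range is complemented via the projection $T\circ S$, giving $\lipfree{M}\compl Y$. (If $M$ is bounded the partition is a finite sum and the statement is immediate.) I expect the only delicate points to be the bounded-overlap/support bookkeeping for the partition of unity and the geometric-sum estimate above; the weighting operators and the isometric inclusions are routine.
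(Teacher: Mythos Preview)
Your proposal is correct and follows essentially the same approach as the paper: exhibit operators $T:\lipfree{M}\to Y$ and $S:Y\to\lipfree{M}$ with $ST=\mathrm{id}$, where $S$ sums the canonical inclusions and $T$ is built from weighting operators associated to a radial partition of unity. The only difference is one of packaging: the paper simply quotes \cite[Lemma 4.2]{Kalton} for the existence of operators $T_k$ with $\sum_k T_k\mu=\mu$ and $\sum_k\norm{T_k\mu}\leq C\norm{\mu}$ (and then collapses the indices $k\leq 1$ into the first coordinate to pass from $\ZZ$ to $\NN$), whereas you reconstruct that lemma explicitly via the tent functions $\phi_n$ and the asymmetric-split molecule estimate---which is precisely Kalton's original argument.
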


\noindent This differs from Kalton's original formulation \cite[Proposition 4.3]{Kalton}, which merely claims that $\pare{\bigoplus_{k\in\ZZ} \lipfree{B(0,2^k)}}_1$ contains a subspace isomorphic to $\lipfree{M}$, but a similar argument yields Lemma \ref{lm:kalton decomposition} as follows. By \cite[Lemma 4.2]{Kalton}, there exist operators $T_k:\lipfree{M}\to\lipfree{B(0,2^k)}$, $k\in\ZZ$, and a universal constant $C<\infty$ such that $\mu=\sum_{k\in\ZZ}T_k\mu$ and $\sum_{k\in\ZZ}\norm{T_k\mu}\leq C\norm{\mu}$ for all $\mu\in\lipfree{M}$. Let $Y=\pare{\bigoplus_{n\in\NN} \lipfree{B(0,2^n)}}_1$ and consider the operators
\begin{align*}
T:\lipfree{M}\to Y \qquad &, \qquad T\mu=\big( \textstyle\sum_{k\leq 1}T_k\mu,T_2\mu,T_3\mu,\ldots \big) \\
S:Y\to\lipfree{M} \qquad &, \qquad S((\mu_n)_{n\in\NN})=\textstyle\sum_{n\in\NN}\mu_n
\end{align*}
then $ST$ is the identity, hence $TS$ is a projection of $Y$ onto $T(\lipfree{M})\sim\lipfree{M}$.

We will also need Kaufmann's theorem from \cite{Kaufmann}.

\begin{lemma}[Kaufmann]\label{lm:kaufmann}
For any Banach space $X$, $\lipfree{B_X} \sim \lipfree{X} \sim \pare{\bigoplus_\NN \lipfree{X}}_1$.
\end{lemma}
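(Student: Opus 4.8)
The plan is to derive all three isomorphisms from three complementation relations together with Pe\l czy\'nski's decomposition method. The relations are: $(\mathrm a)$ $\lipfree{B_X}\compl\lipfree X$, coming from the radial retraction of $X$ onto $B_X$; $(\mathrm b)$ $\lipfree X\compl\pare{\bigoplus_\NN\lipfree{B_X}}_1$, which is Kalton's decomposition; and $(\mathrm c)$ $\pare{\bigoplus_\NN\lipfree{B_X}}_1\compl\lipfree{B_X}$, the main new ingredient, obtained by packing well-separated rescaled copies of $B_X$ inside $B_X$. Granting these, the conclusion follows mechanically. Since $\pare{\bigoplus_\NN\lipfree{B_X}}_1$ is trivially isomorphic to its own countable $\ell_1$-sum and contains $\lipfree{B_X}$ as a complemented summand, $(\mathrm c)$ together with this trivial reverse inclusion yields $\lipfree{B_X}\sim\pare{\bigoplus_\NN\lipfree{B_X}}_1$; in particular $\lipfree{B_X}$ is isomorphic to its own $\ell_1$-sum. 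Then $(\mathrm b)$ upgrades to $\lipfree X\compl\lipfree{B_X}$, which combined with $(\mathrm a)$ and the self-$\ell_1$-sum property of $\lipfree{B_X}$ gives $\lipfree X\sim\lipfree{B_X}$ by Pe\l czy\'nski. Finally $\lipfree X\sim\lipfree{B_X}\sim\pare{\bigoplus_\NN\lipfree{B_X}}_1\sim\pare{\bigoplus_\NN\lipfree X}_1$.

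Ingredient $(\mathrm a)$ is immediate: the map $x\mapsto x/\max\set{1,\norm x}$ is a Lipschitz retraction of $X$ onto $B_X$, so composition with it is a weak$^*$-continuous extension operator $\Lip_0(B_X)\to\Lip_0(X)$ and hence, by Fact \ref{clipfact}, induces a projection of $\lipfree X$ onto $\lipfree{B_X}$. Ingredient $(\mathrm b)$ is Lemma \ref{lm:kalton decomposition} for $M=X$: each ball $B(0,2^n)=2^nB_X$ satisfies $\lipfree{2^nB_X}\equiv\lipfree{B_X}$, since the free space is invariant under rescaling the metric, so $\lipfree X\compl\pare{\bigoplus_n\lipfree{2^nB_X}}_1\equiv\pare{\bigoplus_\NN\lipfree{B_X}}_1$.

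Ingredient $(\mathrm c)$ is the technical heart. Fix a unit vector $u\in X$ and put $c_n=2^{-n}u$, $\rho_n=2^{-n}/16$, and $B_n=B(c_n,\rho_n)\subset B_X$. These balls are pairwise disjoint, each lies at distance $\approx 2^{-n}$ from $0$ while having diameter $\approx 2^{-n}/8$, and one checks a uniform comparison $d(x,0)+d(y,0)\le C\,d(x,y)$ for a universal constant $C$ whenever $x\in B_n$, $y\in B_m$ with $n\neq m$ (for $n<m$ one has $d(x,y)\geq 3\cdot 2^{-n}/8$ while $d(x,0)+d(y,0)\leq 17\cdot 2^{-n}/8$). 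Writing $\tilde{B}_n=B_n\cup\set{0}$ with base point $0$, each $\lipfree{\tilde{B}_n}$ is isomorphic to $\lipfree{B_X}$: indeed $B_n=c_n+\rho_nB_X$ gives $\lipfree{B_n}\equiv\lipfree{B_X}$ by translation and rescaling, while adjoining the far-away point $0$ adds at most a one-dimensional summand (via the $1$-Lipschitz retraction of $\tilde B_n$ onto $B_n$ fixing $B_n$), which is absorbed because $\lipfree{B_n}$ contains a complemented copy of $\ell_1$ by Lemma \ref{lm:cdw}. It therefore suffices to prove $\pare{\bigoplus_n\lipfree{\tilde{B}_n}}_1\compl\lipfree{B_X}$.

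For this I would produce, on the Lipschitz side, the restriction operator $\Psi:\Lip_0(B_X)\to\pare{\bigoplus_n\Lip_0(\tilde{B}_n)}_\infty$, $\Psi g=(g\restrict_{\tilde{B}_n})_n$, of norm $\leq 1$, and an extension operator $\Phi$ in the reverse direction with $\Psi\Phi=\mathrm{Id}$. To build $\Phi$, choose Lipschitz bumps $\varphi_n$ equal to $1$ on $B_n$ and supported on the pairwise disjoint enlarged balls $B(c_n,2\rho_n)$, with $\lipnorm{\varphi_n}\lesssim 1/\rho_n$, and set $\Phi((f_n)_n)=\sum_n\varphi_n\cdot(f_n\circ r_n)$, where $r_n$ is the ($\leq 2$-Lipschitz) radial retraction of $X$ onto $B_n$. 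As the supports are disjoint, at each point at most one summand survives, so $\Phi((f_n)_n)$ restricts to $f_m$ on $B_m$, giving $\Psi\Phi=\mathrm{Id}$. The product rule \eqref{eq:product rule} bounds the Lipschitz constant on each support uniformly, since $\lipnorm{\varphi_n}\sup_{\supp\varphi_n}\abs{f_n\circ r_n}\lesssim(1/\rho_n)\cdot\lipnorm{f_n}2^{-n}\lesssim\lipnorm{f_n}$ and $\lipnorm{f_n\circ r_n}\sup\abs{\varphi_n}\leq 2\lipnorm{f_n}$; the global constant then follows from the comparison $d(x,0)+d(y,0)\leq C\,d(x,y)$ together with the fact that $\Phi((f_n)_n)$ vanishes off the supports, so that any segment joining an active to an inactive region crosses a zero of $\Phi((f_n)_n)$. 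Both $\Phi$ and $\Psi$ are weak$^*$-continuous because each coordinate of their output depends pointwise on a single coordinate of the input, so taking preadjoints converts $\Psi\Phi=\mathrm{Id}$ into a projection of $\lipfree{B_X}$ onto a copy of $\pare{\bigoplus_n\lipfree{\tilde{B}_n}}_1$, which is $(\mathrm c)$. The main obstacle is precisely this construction, namely verifying that the patched extension $\Phi$ has Lipschitz norm independent of $n$: this is exactly where the geometric choice of $\rho_n$ and the separation estimate are used, while everything else reduces to routine duality and Pe\l czy\'nski bookkeeping.
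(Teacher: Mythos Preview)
The paper does not prove this lemma; it is simply cited as Kaufmann's theorem from \cite{Kaufmann}. Your argument is essentially a correct reconstruction of Kaufmann's original proof: the three ingredients $(\mathrm a)$, $(\mathrm b)$, $(\mathrm c)$ and the Pe\l czy\'nski bookkeeping are exactly the standard route, and the numerical estimates you give (or minor variants of them) all check out.

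One remark worth making is that your construction for ingredient $(\mathrm c)$ --- packing well-separated rescaled balls $B_n\subset B_X$ and building a patched extension operator via bump functions --- is precisely a special instance of the paper's own Theorem~\ref{th:lip extendable infinite union}, which comes later. Your sets $B_n$ satisfy conditions (i)--(iii) there (each $B_n$ is a Lipschitz retract of $B_X$ with uniform constant, $\diam(B_n)$ is comparable to $d(B_n,0)$, and they are well-separated), so that theorem would give the extendability of $\bigcup_n B_n$ directly; Lemma~\ref{lm:metric l1 sum} then yields $(\bigoplus_n\lipfree{B_n})_1\oplus\ell_1\compl\lipfree{B_X}$, avoiding the detour through $\tilde B_n$ and the absorption argument. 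This is of course circular as a proof strategy within the paper (since Lemma~\ref{lm:kaufmann} is quoted before Theorem~\ref{th:lip extendable infinite union}), but it shows that your hand-built $\Phi$ is doing exactly the same job. The one place where your writeup is slightly compressed is the global Lipschitz bound for $\Phi$ across different supports: the ``segment'' heuristic is fine when one endpoint lies outside all supports, but for two points in distinct supports the cleanest route is the one you hint at, namely $\abs{\Phi f(x)}\lesssim\sup_k\lipnorm{f_k}\cdot d(x,0)$ combined with $d(x,0)+d(y,0)\leq C\,d(x,y)$.
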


The next result has been stated by many authors in slightly different forms \cite{AACD2,Godard,HajekNovotny,Kaufmann}. We will need the following formulation, similar to \cite[Lemma 3.3]{Aliaga25}.

\begin{lemma}\label{lm:metric l1 sum}
Let $\lambda\geq 1$ and $\set{S_i:i\in I}$ be a family of non-empty, pairwise disjoint subsets of $M$ with $0\notin S_i$ and such that $d(x,0)+d(y,0)\leq \lambda\cdot d(x,y)$ for all $x,y$ belonging to different sets $S_i$. Then $\lipfree{\bigcup_{i\in I} S_i} \sim \pare{\bigoplus_{i\in I} \lipfree{S_i}}_1 \oplus \ell_1(I)$.
\end{lemma}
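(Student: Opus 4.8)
The plan is to route the argument through the enlarged sets $\hat S_i := S_i\cup\{0\}$, which by hypothesis pairwise intersect only at the base point and are $\lambda$-separated away from it, and to proceed in two stages. First I would glue the free spaces $\lipfree{\hat S_i}$ along $0$ into an $\ell_1$-sum, and then trade each summand $\lipfree{\hat S_i}$ for $\lipfree{S_i}\oplus\RR$, collecting the resulting lines into the factor $\ell_1(I)$. Since $\delta(0)=0$, adjoining $0$ does not alter the ambient free space, so $\lipfree{\bigcup_{i\in I}S_i}=\lipfree{\bigcup_{i\in I}\hat S_i}$ and the base point may be taken to be $0$ throughout.

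For the first stage I would write down the two natural maps explicitly. The forward map $T\colon\lipfree{\bigcup_{i}\hat S_i}\to\pare{\bigoplus_{i\in I}\lipfree{\hat S_i}}_1$ is the linearization of the assignment that sends $x\in S_i$ to the copy of $\delta(x)$ in the $i$-th coordinate and sends $0$ to $0$; within a single piece this is an isometry, and the separation hypothesis $d(x,0)+d(y,0)\le\lambda\,d(x,y)$ is exactly what controls the increment across two different pieces, so that $\lipnorm{T}\le\lambda$. The backward map $S$ is $\sum_i J_i$, where $J_i\colon\lipfree{\hat S_i}\to\lipfree{\bigcup_i\hat S_i}$ is the isometric inclusion induced by $\hat S_i\subseteq\bigcup_i\hat S_i$ (with the common base point $0$), so that $\norm{S}\le 1$ on the $\ell_1$-sum. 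Evaluating on the generators $\delta(x)$ shows that $ST$ and $TS$ are the respective identities, yielding $\lipfree{\bigcup_iS_i}\sim\pare{\bigoplus_{i\in I}\lipfree{\hat S_i}}_1$ with distortion at most $\lambda$.

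For the second stage, fix $i$ and choose $p_i\in S_i$ realizing $d(p_i,0)\le 2\,\dist(0,S_i)$, which I take as the base point for both $\lipfree{S_i}$ and $\lipfree{\hat S_i}$. I would then define a linear extension operator $E_i\colon\Lip_0(S_i)\to\Lip_0(\hat S_i)$ by $E_ig|_{S_i}=g$ and $E_ig(0)=0$; the inequality $|g(x)|=|g(x)-g(p_i)|\le\lipnorm{g}\,(d(x,0)+d(p_i,0))\le 3\,\lipnorm{g}\,d(x,0)$ gives $\norm{E_i}\le 3$. Together with the restriction $\rho_i$ (of norm $\le 1$ and satisfying $\rho_iE_i=\mathrm{id}$), this produces a pointwise-continuous projection $P_i=E_i\rho_i$ of norm $\le 3$ whose range is isomorphic to $\Lip_0(S_i)$ and whose kernel, the functions vanishing on $S_i$, is one-dimensional. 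Dualizing the uniformly bounded $P_i$ gives $\lipfree{\hat S_i}\sim\lipfree{S_i}\oplus\RR$ with a constant independent of $i$, and since the decompositions are uniform I may reassemble the $\ell_1$-sums to obtain $\pare{\bigoplus_i\lipfree{\hat S_i}}_1\sim\pare{\bigoplus_i\lipfree{S_i}}_1\oplus\ell_1(I)$, completing the chain.

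The step I expect to be the main obstacle is the uniformity of the second stage, and in particular the degenerate case $\dist(0,S_i)=0$, where $0\in\cl{S_i}$ and no point $p_i$ close to $0$ can be selected, so the norm-$3$ extension operator above is unavailable. In that situation $S_i$ is necessarily infinite and every Lipschitz function on $S_i$ extends uniquely and isometrically to $\hat S_i$, so $\lipfree{\hat S_i}\equiv\lipfree{S_i}$ and \emph{no} genuine new line appears; the required factor $\RR$ must instead be absorbed, using that $\lipfree{S_i}$ contains a complemented copy of $\ell_1$ (Lemma \ref{lm:cdw}) and hence $\lipfree{S_i}\sim\lipfree{S_i}\oplus\RR$. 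Reconciling these two regimes while keeping the isomorphism constants uniform over $i$ — so that the per-piece equivalences still glue into a single isomorphism of the $\ell_1$-sums — is the delicate point that the argument has to address with care.
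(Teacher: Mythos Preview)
The paper does not prove Lemma~\ref{lm:metric l1 sum}; it is stated as a known preliminary with references to \cite{AACD2,Godard,HajekNovotny,Kaufmann} and to \cite[Lemma~3.3]{Aliaga25}, so there is no in-paper argument to compare against. Your two-stage outline is precisely the standard route taken in those references: the first stage is the ``gluing at a common point'' isomorphism (this is \cite[Propositions~2.8 and~3.9]{Weaver2}, invoked verbatim later in the proof of Theorem~\ref{th:gridsum}), and the second stage is the routine splitting off of a one-dimensional complement from each $\lipfree{\hat S_i}$.

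The obstacle you isolate in the last paragraph is genuine but not serious. For indices with $\dist(0,S_i)>0$ your norm-$3$ extension operator gives $\lipfree{\hat S_i}\sim\lipfree{S_i}\oplus\RR$ with constant at most~$7$; the one-dimensional complements carry the norms $c\mapsto|c|\,d(0,p_i)$, and the resulting weighted $\ell_1$ is isometric to the unweighted one by coordinatewise rescaling. For indices with $\dist(0,S_i)=0$ you correctly observe that $\lipfree{\hat S_i}\equiv\lipfree{S_i}$ and that $S_i$ is infinite; the construction in \cite{CDW16} (pass to a sequence with geometrically decaying gaps, so that the associated molecules are uniformly equivalent to the $\ell_1$-basis and uniformly complemented) yields $\ell_1\compl\lipfree{S_i}$ with a universal constant, hence $\lipfree{S_i}\sim\lipfree{S_i}\oplus\RR$ uniformly over such $i$ as well. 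The two regimes therefore glue without difficulty. If you prefer to avoid the issue altogether, note that every invocation of Lemma~\ref{lm:metric l1 sum} in the paper has $\dist(0,S_i)>0$ for all~$i$, so the degenerate case never actually occurs where the lemma is used.
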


\noindent Following \cite{Aliaga25}, we say that sets $S_i$ satisfying the hypothesis of Lemma \ref{lm:metric l1 sum} are \emph{well-separated} (with respect to the base point).

Finally, we will need the following density result as well. Unlike the previous lemmas, this one is only valid for Lipschitz spaces, but not for Lipschitz-free spaces. It is a particular case of \cite[Lemma 1.3]{CCD}.

\begin{lemma}\label{lm:lip density}
Suppose that $(S_n)$ is a sequence of subsets of $M$ with the property that for every $x\in M$ there exist points $x_n\in S_n$ such that $x_n\to x$. Then $\Lip_0(M) \compl \pare{\bigoplus_{n\in\NN} \Lip_0(S_n)}_\infty$.
\end{lemma}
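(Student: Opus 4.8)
The plan is to realise $\Lip_0(M)$ as the range of a bounded projection on $Y:=\pare{\bigoplus_{n\in\NN}\Lip_0(S_n)}_\infty$ by producing operators $J\colon\Lip_0(M)\to Y$ and $P\colon Y\to\Lip_0(M)$ with $PJ=\mathrm{Id}_{\Lip_0(M)}$; then $JP$ is a projection of $Y$ onto $J(\Lip_0(M))\sim\Lip_0(M)$, which is exactly $\Lip_0(M)\compl Y$. Applying the density hypothesis to the base point $0\in M$, I first fix points $p_n\in S_n$ with $p_n\to 0$ and declare each $p_n$ to be the base point of $S_n$; this does not affect $Y$ up to isometry, since Lipschitz spaces are invariant under change of base point.

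The embedding $J$ is restriction recentred at the base point: set $J(f)=\pare{f\restrict_{S_n}-f(p_n)}_{n\in\NN}$. Each coordinate vanishes at $p_n$, so it lies in $\Lip_0(S_n)$, and since the Lipschitz seminorm is translation-invariant one has $\lipnorm{f\restrict_{S_n}-f(p_n)}=\lipnorm{f\restrict_{S_n}}\le\lipnorm{f}$. Hence $\norm{J(f)}_\infty:=\sup_n\lipnorm{f\restrict_{S_n}-f(p_n)}\le\lipnorm{f}$ and $J$ is a well-defined norm-one operator.

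For the projection I would use an ultrafilter limit. Fix a free ultrafilter $\mathcal{U}$ on $\NN$. Given $g=(g_n)_n\in Y$ and $x\in M$, choose any $x_n\in S_n$ with $x_n\to x$ (these exist by hypothesis) and set $P(g)(x)=\lim_{\mathcal{U}}g_n(x_n)$. The limit exists because $\abs{g_n(x_n)}=\abs{g_n(x_n)-g_n(p_n)}\le\norm{g}_\infty\,d(x_n,p_n)$ stays bounded as $d(x_n,p_n)\to d(x,0)$. The value is independent of the approximating sequence: if $x_n,x_n'\to x$ then $\abs{g_n(x_n)-g_n(x_n')}\le\norm{g}_\infty\,d(x_n,x_n')\to0$, so the two $\mathcal{U}$-limits coincide; this independence also gives linearity of $P$. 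Taking $x_n,y_n\to x,y$ and passing to the $\mathcal{U}$-limit in $\abs{g_n(x_n)-g_n(y_n)}\le\norm{g}_\infty\,d(x_n,y_n)$ yields $\abs{P(g)(x)-P(g)(y)}\le\norm{g}_\infty\,d(x,y)$, so $P(g)\in\Lip(M)$ with $\lipnorm{P(g)}\le\norm{g}_\infty$; and $P(g)(0)=\lim_{\mathcal{U}}g_n(p_n)=0$ by using the constant sequence $x_n=p_n$. Thus $P$ is a norm-one operator into $\Lip_0(M)$.

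Finally, $PJ=\mathrm{Id}$: for $f\in\Lip_0(M)$ the $n$-th coordinate of $J(f)$ is $g_n=f\restrict_{S_n}-f(p_n)$, so $g_n(x_n)=f(x_n)-f(p_n)\to f(x)-f(0)=f(x)$ by continuity of $f$, whence $P(J(f))(x)=f(x)$. This forces $J$ to be an isometric embedding (from $\lipnorm{f}=\lipnorm{P(J(f))}\le\norm{J(f)}_\infty\le\lipnorm{f}$) and $JP$ to be the desired projection. The only genuinely delicate point is the well-definedness of $P$, namely verifying that the ultrafilter limit does not depend on the chosen approximating sequences; everything else is a routine check of Lipschitz estimates, and, crucially, no Lipschitz extension operator (which need not exist) is ever required.
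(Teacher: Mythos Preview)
Your argument is correct: the recentred restriction operator $J$ and the ultrafilter-limit operator $P$ satisfy $PJ=\mathrm{Id}$, so $JP$ projects $Y$ onto a copy of $\Lip_0(M)$. The paper itself does not prove this lemma but merely cites \cite[Lemma~1.3]{CCD}; the argument there is essentially the same Banach-limit/ultrafilter construction you give, so your approach coincides with the cited one.
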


\section{Lipschitz extendable sets}\label{extensions}

Given any subset $S\subset M$ and any function $f\in\Lip(S)$, McShane's theorem (see e.g. \cite[Theorem 1.33]{Weaver2}) guarantees the existence of extensions $F\in\Lip(M)$ such that $F\restrict_S=f$ and $\lipnorm{F}=\lipnorm{f}$. Consequently, if $0\in S$ then one can identify $\lipfree{S}$ isometrically with the subspace $\cl{\lspan}\,\delta(S)$ of $\lipfree{M}$. However, McShane's theorem is purely existential and provides no information on the dependence of $F$ on $f$ as $f$ varies. The following stronger notions demand that there exist a linear dependence, possibly at the expense of increasing the Lipschitz constant. As we will see below, they are related to the complementability of $\lipfree{S}$ in $\lipfree{M}$.

\begin{definition}\label{def:lip extendable}
Let $M$ be a metric space and $S\subset M$.
\begin{enumerate}[label={\upshape{(\alph*)}}]
\item We say that $S$ is \emph{Lipschitz extendable in $M$} if there exists an extension operator $E:\Lip_0(S)\to\Lip_0(M)$. By \emph{extension operator} we mean an operator such that $(Ef)(x)=f(x)$ for all $f\in\Lip_0(S)$ and $x\in S$.
\item If $E$ can be chosen to be pointwise-to-pointwise continuous (hence weak$^*$-weak$^*$ continuous), we say that $S$ is \emph{weak$^*$ Lipschitz extendable} in $M$.
\item If $1\leq C<\infty$ and $E$ can be chosen such that $\norm{E}\leq C$, we say that $S$ is \emph{(weak$^*$) $C$-Lipschitz extendable} in $M$, or \emph{(weak$^*$) Lipschitz extendable with constant $C$}.
\end{enumerate}
\end{definition}

\begin{remark}
\label{remark:lip0_lip}
In Definition \ref{def:lip extendable}, we are implicitly assuming that $S$ contains the base point $0$ so that $\Lip_0(S)$ is correctly defined. In fact, the definition can be restated equivalently without using the base point: $S$ is (weak$^*$) $C$-Lipschitz extendable in $M$ if and only if there exists a (pointwise continuous) linear extension mapping $E':\Lip(S)\to\Lip(M)$ with $\lipnorm{E'f}\leq C\lipnorm{f}$. Indeed, if such a mapping exists and $0\in S$, then the restriction $E$ of $E'$ to $\Lip_0(M)$ witnesses Lipschitz extendability. Conversely, if $0\in S$ and there exists an extension operator $E:\Lip_0(S)\to\Lip_0(M)$, then we can take $E'f=f(0)+E(f-f(0))$ for any $f\in\Lip(S)$. This argument also shows that the choice of base point, whether it belongs to $S$ or not, does not affect Lipschitz extendability or its constant.
\end{remark}

In general, Lipschitz extendability of a set $S$ depends on the ambient metric space $M$. For instance, every set $S$ is extendable to one extra point (see Fact \ref{fact:one_point} below), but there are situations where no extension operator exists, see e.g. \cite{Naor15}. An important observation is that if $S$ is (weak$^*$) $C$-Lipschitz extendable to $M$ then it is also (weak$^*$) $C$-Lipschitz extendable to any subset $S'$ such that $S\subset S'\subset M$. This follows by composing extension operators with the restriction $R:f\in\Lip_0(M)\mapsto f\restrict_{S'}$, as $R$ is non-expansive and pointwise continuous.

Extension operators are a key tool in the study of the structure of Lipschitz and Lipschitz-free spaces due to the following known facts, brought to the forefront by \cite{AmbrosioPuglisi,HajekNovotny,Kaufmann,LancienPernecka}.
For part (c), recall that the Lipschitz extension constant $e(S,M)$ is the infimum of all $L<\infty$ such that, for every Banach space $X$ and every $1$-Lipschitz map $f:S\to X$, there is a $L$-Lipschitz map $F:M\to X$ extending $f$, that is, satisfying ${F\restrict}_S=f$ (with the convention that $e(S,M)=\infty$ if no such $L$ exists).

\begin{fact}\label{clipfact}
Let $M$ be a metric space, $S\subset M$ with $0\in S$, and $C\in [1,\infty)$.
\begin{enumerate}[label={\upshape{(\alph*)}}]
\item If $S$ is $C$-Lipschitz extendable in $M$, then $\Lip_0(S)$ is $C$-isomorphic to a $C$-complemented subspace of $\Lip_0(M)$.
\item $S$ is weak$^*$ $C$-Lipschitz extendable in $M$ if and only if $\lipfree{S}$ is $C$-complemented in $\lipfree{M}$.
\item The constant $e(S,M)$ equals
\begin{align*}
e(S,M) &= \inf\set{L\geq 1 \,:\, \text{$S$ is weak$^*$ $L$-Lipschitz extendable in $M$}} \\
&= \inf\set{\norm{P} \,:\, P:\lipfree{M}\to\lipfree{S}\text{ is a linear projection}} .
\end{align*}
\end{enumerate}
\end{fact}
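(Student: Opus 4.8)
\emph{Setup and part (a).} The plan is to route everything through two canonical maps: the restriction operator $R:\Lip_0(M)\to\Lip_0(S)$, $Rf=f\restrict_S$, which is non-expansive and weak$^*$-weak$^*$ continuous, and the isometric embedding $\iota:\lipfree{S}\to\lipfree{M}$ induced by the inclusion $S\hookrightarrow M$ (sending $\delta(x)$ to $\delta(x)$). A one-line computation on evaluation functionals, $\duality{\iota\,\delta(x),f}=f(x)=\duality{\delta(x),Rf}$, shows $R=\iota^*$. The defining property of an extension operator $E:\Lip_0(S)\to\Lip_0(M)$, namely $(Ef)\restrict_S=f$, is precisely the relation $RE=\mathrm{Id}_{\Lip_0(S)}$, so $E$ is a right inverse of $R$. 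For (a), given such an $E$ with $\norm{E}\leq C$, the identity $RE=\mathrm{Id}$ forces $E$ to be injective with $E^{-1}=R\restrict_{E(\Lip_0(S))}$ of norm $\leq\norm{R}\leq 1$; hence $E$ is a $C$-isomorphism onto its range. Moreover $P:=ER$ satisfies $P^2=E(RE)R=ER=P$ and $\norm{P}\leq\norm{E}\,\norm{R}\leq C$, so it is a norm-$\leq C$ projection of $\Lip_0(M)$ onto $E(\Lip_0(S))$, giving the claim.

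\emph{Part (b).} Here I would invoke the standard duality between weak$^*$-weak$^*$ continuous operators and adjoints: weak$^*$ extendability means $E$ can be chosen weak$^*$ continuous, i.e.\ $E=P^*$ for a unique operator $P:\lipfree{M}\to\lipfree{S}$ with $\norm{P}=\norm{E}$, and conversely every such adjoint is weak$^*$ continuous. Then $RE=\iota^*P^*=(P\iota)^*$, and since the adjoint map is injective the extension identity $RE=\mathrm{Id}_{\Lip_0(S)}$ is equivalent to $P\iota=\mathrm{Id}_{\lipfree{S}}$. The latter says exactly that $\iota P$ is an idempotent on $\lipfree{M}$ with range $\iota(\lipfree{S})$, i.e.\ a projection onto $\lipfree{S}$ of norm $\leq\norm{P}$. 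Running this equivalence both ways, and using that $\iota$ is isometric so that passing between $P$ and $\iota P$ preserves norms, yields the norm-tracking equivalence asserted in (b).

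\emph{Part (c).} By (b), weak$^*$ $L$-Lipschitz extendability coincides with the existence of a norm-$\leq L$ projection of $\lipfree{M}$ onto $\lipfree{S}$, which gives the second equality at once upon taking infima over $L$. For the first equality I would prove two inequalities using the linearization property of $\lipfree{\cdot}$. If there is a projection $P:\lipfree{M}\to\lipfree{S}$ with $\norm{P}\leq L$ (equivalently $P\iota=\mathrm{Id}$), then for any Banach space $X$ and any $1$-Lipschitz $f:S\to X$ — which we may assume fixes the base point after translating by $f(0)$ — the linearization $\hat f:\lipfree{S}\to X$ has $\norm{\hat f}\leq 1$, and $F:=\hat f\circ P\circ\delta:M\to X$ is an $L$-Lipschitz extension of $f$, since $\hat f\circ P\circ\iota=\hat f$ forces $F\restrict_S=f$; thus $e(S,M)\leq L$. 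Conversely, to bound the extendability infimum by $e(S,M)$, I would exploit the \emph{universal} target $X=\lipfree{S}$ together with the isometric map $f=\delta:S\to\lipfree{S}$: for any $L>e(S,M)$ there is an $L$-Lipschitz extension $F:M\to\lipfree{S}$ of $\delta$, whose linearization $\hat F:\lipfree{M}\to\lipfree{S}$ satisfies $\norm{\hat F}\leq L$ and, checked on the dense set of evaluation functionals, $\hat F\iota=\mathrm{Id}_{\lipfree{S}}$. This is precisely a projection of the required norm, so $S$ is weak$^*$ $L$-Lipschitz extendable; letting $L\downarrow e(S,M)$ completes the equality.

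\emph{Main obstacle.} I expect the only genuinely substantive step to be the converse direction of (c): recognizing that testing the abstract, range-universal constant $e(S,M)$ against the single pair $(X,f)=(\lipfree{S},\delta)$ captures all the needed information, because the resulting linearized extension is automatically a projection onto $\lipfree{S}$. Everything else is formal once the identification $R=\iota^*$ and the adjoint description of weak$^*$ continuous operators are in hand; the only technical bookkeeping is the base-point normalization in the first inequality of (c), handled by replacing $f$ with $f-f(0)$ without altering its Lipschitz constant.
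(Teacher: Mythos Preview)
Your proof is correct and follows essentially the same approach as the paper's: both use the restriction operator $R$ and the identity $RE=\mathrm{Id}$ to build the projection $ER$ in (a), the adjoint/preadjoint correspondence for weak$^*$ continuous operators in (b), and in (c) the linearization $\hat f\circ P\circ\delta$ for one inequality and the universal test map $\delta:S\to\lipfree{S}$ for the other. Your framing via $R=\iota^*$ is a touch more categorical than the paper's direct computation on evaluation functionals, but the content is identical.
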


\begin{proof}
(a) Suppose that there exists an extension operator $E:\Lip_0(S)\to\Lip_0(M)$ and let $R:\Lip_0(M)\to\Lip_0(S)$ denote the restriction operator $f\mapsto f\restrict_S$. Then the composition $RE$ is the identity, therefore $(ER)^2=ER$ is a projection with $\norm{ER}\leq\norm{E}$. Since $R$ is onto by McShane's theorem, the image of $ER$ is $E(\Lip_0(S))\sim\Lip_0(S)$.

(b) If, in (a), $E$ is pointwise-to-pointwise continuous then it has a preadjoint $E_*:\lipfree{M}\to\lipfree{S}$ which is a projection as $\duality{f,E_*(\delta(x))}=\duality{Ef,\delta(x)}=f(x)$ for all $x\in S$ and $f\in\Lip_0(S)$. Conversely, if there is a projection $P$ of $\lipfree{M}$ onto $\lipfree{S}$ then $P^*:\Lip_0(S)\to\Lip_0(M)$ is an extension operator as $(P^*f)(x)=\duality{P^*f,\delta(x)}=\duality{f,P(\delta(x))}=\duality{f,\delta(x)}=f(x)$ for all $f\in\Lip_0(S)$ and $x\in S$.

(c) The equality of both infima follows from (b). We will prove that $e(S,M)$ equals the second one.

Let $L>e(S,M)$. Since the map $\delta:S\to\lipfree{S}$ is an isometry, it can be extended to a $L$-Lipschitz map $\Delta:M\to\lipfree{S}$. By the linearization property of Lipschitz-free spaces, there is an operator $P:\lipfree{M}\to\lipfree{S}$ with $\norm{P}\leq L$ that extends $\Delta$, i.e. $P(\delta(x))=\Delta(x)$ for all $x\in M$. In particular $P(\delta(x))=\delta(x)$ for $x\in S$. Since $\delta(S)$ is linearly dense in $\lipfree{S}$, we conclude that $P$ is a projection onto $\lipfree{S}$.

Conversely, suppose that there exists a projection $P:\lipfree{M}\to\lipfree{S}$ with $\norm{P}\leq L$, and let $f:S\to X$ be a $1$-Lipschitz map into a Banach space $X$. We may assume that $f(0)=0$. By the linearization property, there exists an operator $F:\lipfree{S}\to X$ with $\norm{F}\leq 1$ and $F(\delta(x))=f(x)$ for $x\in S$. Then $g=F\circ P\circ \delta:M\to X$ is an $L$-Lipschitz extension of $f$, as $g(x)=F(P(\delta(x)))=F(\delta(x))=f(x)$ for all $x\in S$. So $e(S,M)\leq L$.
\end{proof}

Thus, in our notation, $e(S,M)$ is the least constant $C$ such that $S$ is weak$^*$ $C$-Lipschitz extendable in $M$, similarly to how $\lambda(S,M)$ is the least constant $C$ such that $S$ is $C$-Lipschitz extendable in $M$. At the time of writing, we do not know whether both constants are equivalent in general.

\medskip

Lipschitz extendable sets used in the Lipschitz-free space literature belong almost exclusively to the following two classes:
\begin{itemize}
\item Lipschitz retracts. If there exists a $C$-Lipschitz retraction $r:M\to S$ (i.e. a mapping with $r(x)=x$ for all $x\in S$), then $S$ is weak$^*$ $C$-Lipschitz extendable in $M$ as witnessed by the extension operator $f\mapsto f\circ r$.
\item Metric spaces with finite Nagata dimension, including doubling spaces (such as subsets of $\RR^n$) and ultrametric spaces. We will not define these notions here, and refer instead to \cite[Section 3]{FG} for the definitions and the proof that these sets are always weak$^*$ Lipschitz extendable. In this case extendability is absolute, that is, it does not depend on the ambient space $M$. 
\end{itemize}

The former class already yields a simple case where extendability is guaranteed, that will be needed later.

\begin{fact}\label{fact:one_point}
Let $M$ be a metric space and $x_0\in M$. Then $M\setminus\set{x_0}$ is weak$^*$ $(2+\ep)$-Lipschitz extendable in $M$ for every $\ep>0$.
\end{fact}

\begin{proof}
Put $M_0=M\setminus\set{x_0}$. If $x_0$ is an accumulation point of $M$ then every $f\in\Lip(M_0)$ admits a unique extension to a function in $\Lip(M)$ with the same Lipschitz constant, so extendability holds with constant $1$. Otherwise $D:=d(x_0,M_0)>0$, and we may pick $y_0\in M_0$ such that $d(x_0,y_0)<D(1+\ep)$. Define a mapping $r:M\to M$ by $r(x_0)=y_0$ and $r(x)=x$ for $x\neq x_0$. Then $r$ is a retraction onto $M_0$, and
$$
d(r(x),r(x_0)) = d(x,y_0) \leq d(x,x_0)+d(x_0,y_0) < d(x,x_0)+D(1+\ep) \leq d(x,x_0)\cdot(2+\ep)
$$
for $x\in M_0$. Thus $M_0$ is a $(2+\ep)$-Lipschitz retract of $M$ and this yields the conclusion.
\end{proof}

Our goal in this section is to determine new classes of Lipschitz extendable sets. So we ask a rather natural question: if two sets are Lipschitz extendable, does their union have the same property? Since we have a different extension operator from each of them, can we combine them both somehow? If the sets are separated from each other, it is possible to do this in a straightforward manner.

\begin{lemma}\label{lm:lip extendable union}
Let $S_1,S_2$ be (weak$^*$) Lipschitz extendable subsets of $M$. If $d(S_1,S_2)>0$ and $S_2$ is bounded, then $S_1\cup S_2$ is (weak$^*$) Lipschitz extendable in $M$.
\end{lemma}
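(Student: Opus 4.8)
The plan is to glue the two given extension operators using a Lipschitz cutoff localized around the bounded set $S_2$. By the base-point-free reformulation of Definition~\ref{def:lip extendable}, fix linear extension mappings $E_1'\colon\Lip(S_1)\to\Lip(M)$ and $E_2'\colon\Lip(S_2)\to\Lip(M)$ with $\lipnorm{E_i'g}\le C_i\lipnorm{g}$ (pointwise continuous in the weak$^*$ case); we may assume both sets are non-empty, as otherwise the conclusion is immediate. Put $\delta=d(S_1,S_2)>0$ and define the cutoff $\psi(x)=\max\set{0,\,1-\tfrac{2}{\delta}d(x,S_2)}$, so that $0\le\psi\le1$, $\psi\equiv1$ on $S_2$, $\psi\equiv0$ on $\set{x:d(x,S_2)\ge\delta/2}\supseteq S_1$, $\lipnorm{\psi}\le2/\delta$, and $\supp(\psi)$ lies in the \emph{bounded} set $\set{x:d(x,S_2)\le\delta/2}$.

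For $f\in\Lip(S_1\cup S_2)$ I would set $F_1=E_1'(f\restrict_{S_1})$, $G_2=E_2'(f\restrict_{S_2})$, and define
\[
E'f = F_1 + \psi\cdot(G_2-F_1).
\]
Since $\psi=0$ on $S_1$ and $\psi=1$ on $S_2$, one checks that $E'f$ agrees with $f$ on $S_1\cup S_2$; and $E'$ is visibly linear. It remains to bound $\lipnorm{E'f}$ by a finite multiple of $\lipnorm{f}$.

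Write $H=G_2-F_1$, so that $\lipnorm{F_1}\le C_1\lipnorm{f}$ and $\lipnorm{H}\le(C_1+C_2)\lipnorm{f}$. Applying the product rule \eqref{eq:product rule} to $\psi H$ and using $0\le\psi\le1$, the contribution of the $\lipnorm{H}\sup\abs{\psi}$ term is at most $\lipnorm{H}$, and the remaining term is $\lipnorm{\psi}\sup_{x\in\supp(\psi)}\abs{H(x)}$. The whole difficulty is therefore to estimate $\sup_{\supp(\psi)}\abs{H}$. For $x\in\supp(\psi)$ choose $y\in S_2$ with $d(x,y)\le\delta/2$, so that $\abs{H(x)}\le\abs{H(y)}+\lipnorm{H}\,\delta/2$; and on $S_2$ we have $H(y)=f(y)-F_1(y)$. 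To control this last quantity I would fix a pair $q\in S_1$, $y_0\in S_2$ with $d(q,y_0)\le\delta+1$, and use $F_1(q)=f(q)$ to write $f(y)-F_1(y)=(f(y)-f(q))-(F_1(y)-F_1(q))$, which is bounded by $(1+C_1)\lipnorm{f}\,d(y,q)\le(1+C_1)\lipnorm{f}\,(\diam(S_2)+\delta+1)$. Collecting the estimates yields $\lipnorm{E'f}\le C\lipnorm{f}$ for a finite constant $C$ depending on $C_1,C_2,\delta,\diam(S_2)$, which by the reformulation of Definition~\ref{def:lip extendable} proves that $S_1\cup S_2$ is Lipschitz extendable.

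The hard part is exactly the bound on $\sup_{\supp(\psi)}\abs{H}$: a priori $H$ is merely a Lipschitz function, whose pointwise values are not controlled by its Lipschitz norm, so some localization is unavoidable. It is precisely the two hypotheses that supply it — boundedness of $S_2$ makes $\supp(\psi)$ bounded and $\diam(S_2)$ finite, while the positive separation $\delta$ simultaneously keeps $\lipnorm{\psi}$ finite and forces $\psi\restrict_{S_1}=0$. I note that the constant $C$ obtained this way is not universal and degrades with the geometry of the configuration, but the statement only asserts finiteness. Finally, for the weak$^*$ case, pointwise continuity of $E'$ is inherited from that of $E_1'$, $E_2'$, the restriction maps $f\mapsto f\restrict_{S_i}$, and multiplication by the fixed function $\psi$; together with the boundedness just established, Banach--Dieudonn\'e upgrades this to weak$^*$-to-weak$^*$ continuity.
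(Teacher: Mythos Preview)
Your proof is correct and follows essentially the same approach as the paper: define a Lipschitz cutoff $\psi$ supported near $S_2$, set $E'f = F_1 + \psi\cdot(G_2-F_1)$, and bound the Lipschitz constant via the product rule \eqref{eq:product rule}. The only cosmetic differences are that the paper uses cutoff radius $\delta$ rather than $\delta/2$, works with $\Lip_0$ and a base point in $S_1$ rather than the base-point-free formulation, and bounds $\abs{F_1}$ and $\abs{G_2}$ separately on $\supp(\psi)$ using the base point, whereas you bound $\abs{H}=\abs{G_2-F_1}$ directly by first passing to a nearby $y\in S_2$ (where $H(y)=f(y)-F_1(y)$) and then comparing to a bridge point $q\in S_1$; both routes yield a finite constant depending on $C_1,C_2,\delta,\diam(S_2)$.
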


\begin{proof}
We may assume that the sets are non-empty. Fix a base point $0\in S_1$, as we may, and set $r=d(S_1,S_2)$. By Remark \ref{remark:lip0_lip}, there exist extension operators $E_1:\Lip_0(S_1)\to\Lip_0(M)$ and $E_2:\Lip(S_2)\to\Lip(M)$ with norms $C_1,C_2<\infty$, respectively. Put
$$
h(x)=\max\set{0,1-\frac{1}{r}d(x,S_2)}
$$
for $x\in M$, so that $h\in\Lip_0(M)$, $0\leq h\leq 1$, $h=0$ on $S_1$, $h=1$ on $S_2$, and $h$ has bounded support. Then, for $f\in\Lip_0(S_1\cup S_2)$, define a function $Ef:M\to\RR$ by
$$
Ef = E_1(f\restrict_{S_1}) + h\cdot\pare{ E_2(f\restrict_{S_2}) - E_1(f\restrict_{S_1}) } .
$$
The map $E:f\mapsto Ef$ is clearly linear, and $Ef$ is an extension of $f$ by the properties of $h$. Moreover, if $E_1,E_2$ are pointwise continuous then so is $E$. So we only need to estimate the Lipschitz constant of $Ef$. Denote $F_i=E_i(f\restrict_{S_i})$ for simplicity, and note $\lipnorm{F_i}\leq C_i\lipnorm{f}$. Then, using \eqref{eq:product rule}, we estimate
\begin{align*}
\lipnorm{Ef} &\leq \lipnorm{F_1} + \lipnorm{h\cdot (F_2-F_1)} \\
&\leq C_1\lipnorm{f} + \lipnorm{h}\sup_{x\in\supp(h)}\abs{F_2(x)-F_1(x)} + \norm{h}_\infty\lipnorm{F_2-F_1} \\
&\leq (2C_1+C_2)\lipnorm{f} + \frac{1}{r}\sup_{x\in\supp(h)}\pare{\abs{F_1(x)}+\abs{F_2(x)}} .
\end{align*}
Given $x\in\supp(h)$ we have
$$
\abs{F_1(x)} \leq \lipnorm{F_1}d(x,0) \leq C_1\lipnorm{f}\cdot(d(S_2,0)+\diam(S_2)+r)
$$
and, fixing an arbitrary point $x_0\in S_2$,
\begin{align*}
\abs{F_2(x)} &\leq \lipnorm{F_2}d(x,x_0) + \abs{f(x_0)} \\
&\leq C_2\lipnorm{f}\cdot(\diam(S_2)+r) + \lipnorm{f}d(x_0,0) \\
&\leq C_2\lipnorm{f}\cdot(\diam(S_2)+r) + \lipnorm{f}\cdot(d(S_2,0)+\diam(S_2)) .
\end{align*}
Combining all of the above yields $\lipnorm{Ef}\leq C\lipnorm{f}$ for some $C<\infty$, as was to be shown.
\end{proof}

Repeated application of Lemma \ref{lm:lip extendable union} yields, by induction, a similar result for the union of finitely many Lipschitz extendable sets.

\begin{proposition}
Let $S_1,\ldots,S_n\subset M$ be (weak$^*$) Lipschitz extendable subsets of $M$. Suppose that $d(S_i,S_j)>0$ for all $i\neq j$, and at most one of the sets $S_i$ is unbounded. Then $S_1\cup\ldots\cup S_n$ is (weak$^*$) Lipschitz extendable in $M$.
\end{proposition}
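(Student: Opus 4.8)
The plan is to prove the statement by induction on $n$, using Lemma~\ref{lm:lip extendable union} as the engine of the inductive step. The base case $n=1$ is immediate, since $S_1$ is assumed (weak$^*$) Lipschitz extendable. Before starting the induction I would relabel the sets so that the single possibly-unbounded one is $S_1$; thus $S_2,\ldots,S_n$ are all bounded. I would also invoke the observation made after Definition~\ref{def:lip extendable} that (weak$^*$) Lipschitz extendability and its constant are independent of the choice of base point, so that the base point may be placed freely (say in $S_1$) at each stage without affecting the argument.

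For the inductive step, assume the statement for families of $n-1$ sets and put $T=S_1\cup\cdots\cup S_{n-1}$. The subfamily $\set{S_1,\ldots,S_{n-1}}$ inherits both hypotheses: its members are pairwise positively separated, and at most one of them (namely $S_1$) is unbounded. Hence $T$ is (weak$^*$) Lipschitz extendable in $M$ by the inductive hypothesis. I would then apply Lemma~\ref{lm:lip extendable union} to the pair $T$ and $S_n$. Both sets are (weak$^*$) Lipschitz extendable --- $T$ as just shown and $S_n$ by assumption --- and $S_n$ is bounded by the relabeling. The separation hypothesis of the lemma holds because $d(T,S_n)=\min_{1\leq i\leq n-1}d(S_i,S_n)$ is a minimum of finitely many strictly positive numbers, hence positive. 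The lemma therefore gives that $T\cup S_n=S_1\cup\cdots\cup S_n$ is (weak$^*$) Lipschitz extendable in $M$, closing the induction.

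There is no serious obstacle here; the only point demanding care is the order in which the sets are absorbed. Because Lemma~\ref{lm:lip extendable union} requires the newly adjoined set to be bounded, the induction must repeatedly peel off a bounded set and fold it into the growing union $T$, keeping the (at most one) unbounded set inside $T$ throughout. Attempting the symmetric procedure would violate the boundedness hypothesis of the lemma, so the asymmetry of the $n=2$ case is precisely what dictates the shape of the induction.
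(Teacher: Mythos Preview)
Your proposal is correct and follows exactly the approach the paper intends: the paper's proof is the single sentence ``Repeated application of Lemma~\ref{lm:lip extendable union} yields, by induction, a similar result for the union of finitely many Lipschitz extendable sets.'' Your write-up simply makes explicit the relabeling and the fact that the bounded sets must be adjoined one at a time to the growing union containing the (possibly) unbounded one, which is precisely the care needed to make the induction go through.
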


This argument does not extend (without modifications) to the infinite case because the constant of Lipschitz extendability could increase without bound. But it is possible to make the same general construction work for infinite unions if we assume that they are Lipschitz extendable with the same constant and place some additional geometrical constraints. The following theorem shows that this is the case when the sets are ``sufficiently small'' and ``sufficiently separated'' relative to some anchor point $x_0\in M$. While these conditions seem technical, they will be sufficient for our applications in the following section.

\newpage
\begin{theorem}\label{th:lip extendable infinite union}
Let $M$ be a complete metric space, $x_0\in M$, and $C,D,\lambda<\infty$. Suppose that $\set{S_i:i\in I}$ is a family of non-empty closed subsets of $M$ not containing $x_0$ and satisfying:
\begin{enumerate}[label={\upshape{(\roman*)}}]
\item each $S_i$ is (weak$^*$) $C$-Lipschitz extendable in $M$,
\item $\diam(S_i)\leq D\cdot d(S_i,x_0)$ for all $i$, and
\item $d(x,x_0)+d(y,x_0)\leq \lambda\cdot d(x,y)$ for all $x,y$ belonging to different sets $S_i$.
\end{enumerate}
Then, the union $\bigcup_{i\in I} S_i$ is (weak$^*$) $K$-Lipschitz extendable in $M$ for a constant $K<\infty$ that depends only on $C,D,\lambda$.
\end{theorem}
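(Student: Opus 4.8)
The plan is to glue the individual extension operators together with a partition of unity calibrated to the scale of each $S_i$, in the spirit of Lemma~\ref{lm:lip extendable union}. I would first take $x_0$ as the base point. Since extendability is independent of the base point and, in the weak$^*$ setting, a weak$^*$ extension operator for $\bigcup_i S_i$ amounts to a pointwise-continuous linear map $\Lip_0(\bigcup_i S_i\cup\{x_0\})\to\Lip_0(M)$ that is the identity on $\bigcup_i S_i$ (the dual of $\lipfree{\bigcup_i S_i}$, formed inside $\lipfree{M}$ with base point $x_0$, being $\Lip_0(\bigcup_i S_i\cup\{x_0\})$), it suffices to extend functions $f$ with $f(x_0)=0$. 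Writing $r_i=d(S_i,x_0)>0$, hypotheses (ii) and (iii) supply the two facts I will lean on: each $S_i$ lies in the annulus $r_i\le d(\cdot,x_0)\le(1+D)r_i$, and $d(S_i,S_j)\ge(r_i+r_j)/\lambda$ for $i\ne j$.

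Next I would introduce the bumps $h_i(x)=\max\{0,\,1-(2\lambda/r_i)\,d(x,S_i)\}$, so that $h_i=1$ on $S_i$, $\lipnorm{h_i}\le 2\lambda/r_i$, and $\supp h_i\subset\{x:d(x,S_i)\le r_i/(2\lambda)\}$. The separation $d(S_i,S_j)\ge(r_i+r_j)/\lambda$ forces the supports to be pairwise disjoint and to miss $x_0$. With $F_i=E_i(f\restrict_{S_i})$ the given $C$-bounded extensions, I would set $Ef=\sum_i h_iF_i$. Disjointness of the supports makes this a single term at each point, so $E$ is well defined and linear, it is pointwise (hence weak$^*$, by Banach--Dieudonn\'e) continuous as soon as the $E_i$ are, and $Ef=f$ on $\bigcup_i S_i$ because $h_i=1$ there while every $h_j$ with $j\ne i$ vanishes.

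The crux is the bound $\lipnorm{Ef}\le K\lipnorm{f}$. The single-blob estimate comes from the product rule \eqref{eq:product rule}: since $f(x_0)=0$ and $S_i$ sits in the annulus, $\abs{f(s)}\le(1+D)r_i\lipnorm{f}$ on $S_i$, so $\sup_{\supp h_i}\abs{F_i}$ is at most a multiple of $r_i\lipnorm{f}$; together with $\lipnorm{h_i}\le 2\lambda/r_i$ and $\lipnorm{F_i}\le C\lipnorm{f}$ this gives $\lipnorm{h_iF_i}\le K_0\lipnorm{f}$ for some $K_0=K_0(C,D,\lambda)$. I would then split a pair $x,y$ by cases. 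If both lie in one support $\supp h_i$, or one lies in $\supp h_i$ and the other outside all supports, then $Ef(x)-Ef(y)=h_iF_i(x)-h_iF_i(y)$ (the outside point contributes the value $0=h_iF_i$), and the difference is at most $K_0\lipnorm{f}\,d(x,y)$.

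The delicate case, which I expect to be the main obstacle, is $x\in\supp h_i$, $y\in\supp h_j$ with $i\ne j$, where no common Lipschitz constant is available. Here smallness replaces it: as $h_iF_i$ vanishes at $x_0$, $\abs{Ef(x)}=\abs{h_iF_i(x)-h_iF_i(x_0)}\le K_0\lipnorm{f}\,d(x,x_0)$, and similarly for $y$, while a short computation from (iii) yields $d(x,x_0)+d(y,x_0)\le(1+2\lambda)\,d(x,y)$ for points in distinct blobs; hence $\abs{Ef(x)-Ef(y)}\le K_0(1+2\lambda)\lipnorm{f}\,d(x,y)$, and $K=K_0(1+2\lambda)$ works. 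The real point of difficulty is the interplay making this go through: the estimate $\sup_{\supp h_i}\abs{F_i}\lesssim r_i\lipnorm{f}$, which prevents the steep bumps from inflating the Lipschitz constant, hinges on $f$ vanishing at the \emph{common} anchor $x_0$ together with (ii). Reducing a general $f$ to one with $f(x_0)=0$ is automatic in the weak$^*$ formulation but, for the plain extendability statement, needs a linear ``baseline'' correction absorbing the offsets $f\restrict_{S_i}$; this is exactly where all three hypotheses must be used together.
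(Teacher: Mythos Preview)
Your proposal is correct and follows essentially the same strategy as the paper: the same bumps $h_i$ (the paper writes $r_i=d(S_i,x_0)/(2\lambda)$ and $\Pi_i$, which are exactly your $h_i$), the same disjointness check, the same product-rule bound on each $h_iF_i$, and the same three-case split. The one genuine variation is your cross-blob estimate: instead of routing through anchor points $p_i\in S_i$, $p_j\in S_j$ as the paper does, you bound $\abs{Ef(x)}\le K_0\lipnorm{f}\,d(x,x_0)$ directly (using $h_iF_i(x_0)=0$) and then invoke a perturbed version of (iii) on the supports. This is a clean shortcut and gives a slightly better constant.

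Your closing worry is overstated. You already set things up correctly in the first paragraph: take $x_0$ as the base point of $M$ and prove the bound for $f\in\Lip_0(S\cup\{x_0\})$, which is exactly what the paper does (it writes ``$f\in\Lip_0(S)$'' with base point $x_0$, meaning the same thing). That yields a linear, seminorm-bounded extension map $\Lip(S\cup\{x_0\})\to\Lip(M)$ via $E'f=f(x_0)+E(f-f(x_0))$, i.e.\ $K$-Lipschitz-extendability of $S\cup\{x_0\}$ in the base-point-free sense; by the paper's stated convention that the base point may lie outside $S$ without affecting the constant, this \emph{is} the conclusion for $S$. No separate ``baseline correction absorbing the offsets $f\restrict_{S_i}$'' is needed, and the plain and weak$^*$ cases are handled uniformly.
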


Our argument yields $K\leq \text{constant}\cdot CD^2\lambda^2$ (assuming $D\geq 1$), but our applications will not need the precise value. Recall that condition (iii) means that the sets $S_i$ are well-separated, as in Lemma \ref{lm:metric l1 sum}.

\begin{proof}
By Fact \ref{fact:one_point}, the set $\bigcup_{i\in I} S_i$ is weak$^*$ extendable in $\bigcup_{i\in I} S_i \cup\set{x_0}$ with a fixed constant. Therefore, it suffices to prove that the set $S=\bigcup_{i\in I} S_i \cup\set{x_0}$ is extendable in $M$ with the conditions in the theorem statement.

Recall that the choice of base point does not affect Lipschitz extendability or its constant. Thus, we may assume that $x_0$ is the base point of $M$ and, again by Remark \ref{remark:lip0_lip}, that for each $i\in I$ there exists a linear extension mapping $E_i:\Lip(S_i)\to\Lip(M)$ such that $\lipnorm{E_if}\leq C\lipnorm{f}$ for any $f\in\Lip(S_i)$.

For each $i\in I$, put $r_i=d(S_i,x_0)/2\lambda>0$ and $U_i=\set{x\in M : d(x,S_i)\leq r_i}$, and define a function $\Pi_i\in\Lip(M)$ by
$$
\Pi_i(x) = \max\set{0,1-\frac{1}{r_i}d(x,S_i)}
$$
for $x\in M$. Taking infima in (iii), we obtain
$$
d(S_i,x_0)+d(S_j,x_0) \leq \lambda d(S_i,S_j)
$$
for all $i\neq j\in I$. This guarantees $d(S_i,S_j)>0$. Moreover, $U_i$ and $U_j$ are disjoint: if there existed $x\in U_i\cap U_j$, we would have
$$
d(S_i,S_j) \leq d(x,S_i)+d(x,S_j) \leq r_i+r_j = \frac{1}{2\lambda}(d(S_i,x_0)+d(S_j,x_0)) \leq \frac{1}{2}d(S_i,S_j)
$$
by (iii), contradicting $d(S_i,S_j)>0$. We also have $x_0\notin U_i$ as (iii) forces $\lambda\geq 1$.

Now fix $f\in\Lip_0(S)$. For each $i\in I$, let $f_i:M\to\RR$ be given by
$$
f_i(x) = \Pi_i(x)\cdot(E_i f\restrict_{S_i})(x)
$$
for $x\in M$. Finally, define a function $Ef:M\to\RR$ by
$$
Ef(x) = \begin{cases}
f_i(x) &\text{, if } x\in U_i \\
0 &\text{, otherwise}
\end{cases}
$$
for $x\in M$. Note that $Ef$ is well defined because the sets $U_i$ are pairwise disjoint. Moreover, each $f_i$ is an extension of $f\restrict_{S_i}$ because $\Pi_i(x)=1$ for $x\in S_i$, and $f(x_0)=0$ as $x_0\notin U_i$ for all $i$, thus $Ef$ is an extension of $f$. It is also clear that the map $f\mapsto Ef$ is pointwise continuous if every operator $E_i$ is. Thus, it only remains to be proved that $\lipnorm{Ef}\leq K\lipnorm{f}$ for some $K$ depending only on $C,D,\lambda$; this will show that $E:\Lip_0(S)\to\Lip_0(M)$ is the desired extension operator.

We start by estimating the Lipschitz constant of $f_i$. Fix a point $p_i\in S_i$. Then, for $x\in U_i$ we have
\begin{align*}
\abs{E_i(f\restrict_{S_i})(x)} &\leq \abs{E_i(f\restrict_{S_i})(p_i)} + \lipnorm{E_i(f\restrict_{S_i})}d(x,p_i) \\
&\leq \abs{f(p_i)} + C\lipnorm{f}d(x,p_i) \\
&\leq \lipnorm{f}d(p_i,x_0) + C\lipnorm{f}d(x,p_i) \\
&\leq \lipnorm{f}(d(S_i,x_0)+\diam(S_i)) + C\lipnorm{f}(r_i+\diam(S_i)) \\
&\leq \lipnorm{f}\cdot\pare{1+D + \frac{C}{2\lambda} + CD}\cdot d(S_i,x_0)
\end{align*}
Since $f_i$ vanishes outside of $U_i$, we can use \eqref{eq:product rule} to estimate
\begin{align*}
\lipnorm{f_i} &\leq \norm{\Pi_i}_\infty\lipnorm{E_i(f\restrict_{S_i})} + \lipnorm{\Pi_i}\sup\set{\abs{E_i(f\restrict_{S_i})(x)} : x\in U_i} \\
&\leq 1\cdot C\lipnorm{f} + \frac{1}{r_i}\cdot \lipnorm{f}\cdot\pare{1+D + \frac{C}{2\lambda} + CD}\cdot d(S_i,x_0) \\
&= \lipnorm{f}\cdot\underbrace{\pare{2C+2\lambda(1+D+CD)}}_{K'} .
\end{align*}

Now let $x,y\in M$. If none of them belong to any $U_i$, then $Ef(x)=Ef(y)=0$. If both belong to the same set $U_i$, or one of them (say $x$) does and the other belongs to none, then $\abs{Ef(x)-Ef(y)}=\abs{f_i(x)-f_i(y)}\leq K'\lipnorm{f}d(x,y)$. Finally, suppose $x\in U_i$ and $y\in U_j$ for $i\neq j$. Fix points $p_i\in S_i$ and $p_j\in S_j$. Then we estimate
\begin{align*}
\abs{Ef(x)-Ef(y)} &\leq \abs{Ef(x)-Ef(p_i)} + \abs{Ef(p_i)-Ef(p_j)} + \abs{Ef(p_j)-Ef(y)} \\
&= \abs{f_i(x)-f_i(p_i)} + \abs{f(p_i)-f(p_j)} + \abs{f_j(p_j)-f_j(y)} \\
&\leq K'\lipnorm{f}(d(x,p_i)+d(p_j,y)) + \lipnorm{f}d(p_i,p_j) \\
&\leq \lipnorm{f}(K'(r_i+\diam(S_i)+r_j+\diam(S_j)) + d(p_i,x_0)+d(p_j,x_0)) \\
&\leq \lipnorm{f}(K'(r_i+r_j)+(K'+1)(\diam(S_i)+\diam(S_j)) + d(S_i,x_0)+d(S_j,x_0)) \\
&\leq \lipnorm{f}(K'(r_i+r_j)+((K'+1)D+1)(d(S_i,x_0)+d(S_j,x_0)) \\
&= \lipnorm{f}\underbrace{(K'+2\lambda((K'+1)D+1))}_K (r_i+r_j) \\
&= K\lipnorm{f}(2(r_i+r_j)-r_i-r_j) \\
&\leq K\lipnorm{f}\pare{\frac{1}{\lambda}(d(S_i,x_0)+d(S_j,x_0)) - d(x,S_i)-d(y,S_j)} \\
&\leq K\lipnorm{f}\pare{d(S_i,S_j) - d(x,S_i)-d(y,S_j)} \\
&\leq K\lipnorm{f}d(x,y) .
\end{align*}
This ends the proof.
\end{proof}

\section{Applications}\label{applications}

In this section, we will apply the ideas from Section \ref{extensions}, in particular Theorem \ref{th:lip extendable infinite union}, to obtain new results concerning the isomorphic structure and classification of Lipschitz and Lipschitz-free spaces. The section is split into separate subsections, each focusing on a different object or notion. Additional background and required results are provided in each subsection.

\subsection{Homogeneous metric spaces}
\label{sec:homogeneous}

A metric space $M$ is \emph{(metrically) homogeneous} if, given any $x,y\in M$, there is a bijective isometry $\varphi:M\to M$ such that $\varphi(x)=y$. In other words, the isometry group of $M$ acts transitively on $M$. For instance, metric groups with an invariant distance are homogeneous, as translations are isometries. Homogeneity allows us to ``move sets around'' in $M$ in such a way that geometric separation constraints such as the ones required in Theorem \ref{th:lip extendable infinite union} are satisfied. That is the core observation behind the following result.

\begin{proposition}\label{pr:homogeneous unbounded sequence}
Let $M$ be homogeneous and unbounded. Let $C\in [1,\infty)$, and suppose that $(S_n)$ is a sequence of non-empty bounded subsets of $M$ that are weak$^*$ $C$-Lipschitz extendable in $M$. Then
$$
\pare{\bigoplus_\NN\bigoplus_{n=1}^\infty\lipfree{S_n}}_1\compl\lipfree{M} .
$$
\end{proposition}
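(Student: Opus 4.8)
The plan is to exploit homogeneity and unboundedness to scatter countably many isometric copies of each $S_n$ throughout $M$, arranging them to be well-separated and ``small relative to their distance from an anchor point,'' then to apply Theorem \ref{th:lip extendable infinite union} to their union and read off the $\ell_1$-sum structure via Lemma \ref{lm:metric l1 sum}. Note first that the target space is $\big(\bigoplus_{(k,n)\in\NN\times\NN}\lipfree{S_n}\big)_1$, i.e. countably many copies of each $\lipfree{S_n}$, so I would index the copies by pairs $(k,n)$.

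I would begin with two harmless reductions: pass to the completion $\widehat{M}$ (still homogeneous and unbounded, with $\lipfree{\widehat M}=\lipfree{M}$) so that Theorem \ref{th:lip extendable infinite union} applies, and recall that the Lipschitz-free space of a set coincides with that of its closure, so closing up the sets below changes neither the spaces nor the extendability constants. Then fix an anchor $x_0\in M$, a point $s_n\in S_n$ for each $n$, and a bijection $j\mapsto(k_j,n_j)$ of $\NN$ onto $\NN\times\NN$. Using unboundedness, choose recursively $z_j\in M$ whose distances $R_j:=d(z_j,x_0)$ grow geometrically, say $R_{j+1}\geq 10R_j$, and also satisfy $R_j\geq 2\diam(S_{n_j})$ (finite since the $S_n$ are bounded). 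By homogeneity pick an isometry $\varphi_j$ of $M$ with $\varphi_j(s_{n_j})=z_j$ and set $S'_{k_j,n_j}:=\cl{\varphi_j(S_{n_j})}$, an isometric copy of $\cl{S_{n_j}}$ containing $z_j$.

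Next I would verify the three hypotheses of Theorem \ref{th:lip extendable infinite union} for $\{S'_{k,n}\}$ with anchor $x_0$. Condition (i) holds because weak$^*$ $C$-Lipschitz extendability is preserved by isometries of $M$ (transport a pointwise-continuous extension operator of norm $\leq C$ by $f\mapsto f\circ\varphi_j^{-1}$, using that extendability is base-point independent). Since every point of $S'_{k_j,n_j}$ lies within $\diam(S_{n_j})\leq R_j/2$ of $z_j$, we get $R_j/2\leq d(S'_{k_j,n_j},x_0)\leq R_j$, whence $\diam(S'_{k_j,n_j})=\diam(S_{n_j})\leq R_j/2\leq d(S'_{k_j,n_j},x_0)$, giving (ii) with $D=1$. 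For (iii), if $x$ lies in a copy at scale $R_j$ and $y$ in a distinct copy at scale $R_{j'}\geq 10R_j$, then $d(x,x_0)\leq\tfrac32 R_j$, $d(y,x_0)\leq\tfrac32 R_{j'}$, and $d(x,y)\geq d(y,x_0)-d(x,x_0)\geq\tfrac{7}{20}R_{j'}$, so that $d(x,x_0)+d(y,x_0)\leq\lambda\,d(x,y)$ with a uniform $\lambda$ (e.g. $\lambda=5$); the geometric estimates survive the passage to closures by continuity. In particular the copies are disjoint and avoid $x_0$.

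Theorem \ref{th:lip extendable infinite union} then makes $S':=\bigcup_{k,n}S'_{k,n}$ weak$^*$ $K$-Lipschitz extendable in $\widehat M$, so by Fact \ref{clipfact}(b) $\lipfree{S'}$ is $K$-complemented in $\lipfree{\widehat M}=\lipfree{M}$. Since (iii) is exactly well-separatedness, Lemma \ref{lm:metric l1 sum} gives
$$
\lipfree{S'} \sim \Big(\bigoplus_{(k,n)\in\NN\times\NN}\lipfree{S'_{k,n}}\Big)_1\oplus\ell_1 \equiv \Big(\bigoplus_\NN\bigoplus_{n=1}^\infty\lipfree{S_n}\Big)_1\oplus\ell_1 ,
$$
using $\lipfree{S'_{k,n}}\equiv\lipfree{S_n}$. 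As $\big(\bigoplus_\NN\bigoplus_n\lipfree{S_n}\big)_1$ is trivially complemented in this direct sum and the direct sum is complemented in $\lipfree{M}$, transitivity of $\compl$ yields the claim and the extra $\ell_1$ summand is irrelevant. The main obstacle is precisely the simultaneous fulfilment of (ii) and (iii): the growth rate of the $R_j$ must be fast enough to dominate the fixed but possibly large diameters while keeping $D$ and $\lambda$ uniform, which is exactly where unboundedness (to reach large scales) and homogeneity (to transport each $S_n$ there isometrically) enter.
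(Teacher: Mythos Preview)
Your proof is correct and follows essentially the same route as the paper: scatter isometric copies of the $S_n$ at geometrically growing scales using homogeneity and unboundedness, verify the hypotheses of Theorem~\ref{th:lip extendable infinite union}, and then read off the $\ell_1$-sum via Lemma~\ref{lm:metric l1 sum}. The only cosmetic differences are that the paper first reduces to a single copy of each $S_n$ and then re-indexes, whereas you handle all countably many copies in one pass via a bijection $\NN\to\NN\times\NN$; and you are slightly more careful than the paper in explicitly passing to the completion and taking closures so that the completeness and closedness hypotheses of Theorem~\ref{th:lip extendable infinite union} are literally satisfied.
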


\begin{proof}
We will prove that $\pare{\bigoplus_n\lipfree{S_n}}_1\compl\lipfree{M}$. That is enough: if this holds, then the general statement is obtained by applying that case to a sequence $(S'_k)$ that contains countably many copies of each $S_n$ (note that we are not assuming the sets $S_n$ to be disjoint).

Fix a base point $0$ of $M$. We now build a sequence $(\mathcal{A}_n)$ of subsets of $M$ and an increasing sequence $(k_n)$ in $\NN$ iteratively. Start with $\mathcal{A}_0=\set{0}$ and $k_0=0$. For $n\geq 1$, once sets $\mathcal{A}_i$ and numbers $k_i$ have been chosen for $i<n$, we choose $k_n$ satisfying
\begin{itemize}
\item $k_n\geq k_{n-1}+4$,
\item $2^{k_n}>\diam(S_n)$, and
\item there exists some point $p_n\in M$ with $2^{k_n+1}\leq d(p_n,0)\leq 2^{k_n+2}$.
\end{itemize}
Fix some $q_n\in S_n$. By homogeneity, there exists a bijective isometry $\varphi:M\to M$ such that $\varphi(q_n)=p_n$. Let $\mathcal{A}_n=\varphi(S_n)$. Then $\mathcal{A}_n$ is isometric to $S_n$ and weak$^*$ $C$-Lipschitz extendable in $M$: if $E$ is the extension operator associated to $S_n$ then $E'f=E(f\circ\varphi)\circ\varphi^{-1}$ defines an extension operator for $f\in\Lip(\mathcal{A}_n)$.

Note that $2^{k_n}\leq d(x,0)\leq 2^{k_n+3}$ for all $x\in \mathcal{A}_n$. Suppose that $x\in\mathcal{A}_n$ and $y\in\mathcal{A}_m$ with $n>m\geq 1$. Then we have
\begin{align*}
d(x,0) + d(y,0) &\leq 2^{k_n+3} + 2^{k_m+3} = 8\cdot (2^{k_n}+2^{k_m}) \\
&\leq 16\cdot 2^{k_n} = 32\cdot (2^{k_n}-2^{k_n-1}) \\
&\leq 32\cdot (2^{k_n}-2^{k_m+3}) \leq 32 (d(x,0)-d(y,0)) \leq 32\,d(x,y) .
\end{align*}
We have thus shown that the sequence $(\mathcal{A}_n)_{n\geq 1}$ satisfies the hypotheses of Theorem \ref{th:lip extendable infinite union} with $D=1$ and $\lambda=32$. Therefore the union $\mathcal{A}=\bigcup_{n=1}^\infty\mathcal{A}_n$ is weak$^*$ Lipschitz extendable in $M$, and we deduce $\lipfree{\mathcal{A}}\compl\lipfree{M}$. Moreover, the sets $\mathcal{A}_n$ are non-empty and pairwise disjoint, so Lemma \ref{lm:metric l1 sum} implies
$$
\pare{\bigoplus_{n=1}^\infty\lipfree{S_n}}_1 \equiv \pare{\bigoplus_{n=1}^\infty\lipfree{\mathcal{A}_n}}_1 \compl \pare{\bigoplus_{n=1}^\infty\lipfree{\mathcal{A}_n}}_1\oplus\ell_1 \sim \lipfree{\mathcal{A}} \compl \lipfree{M}
$$
as we needed to prove.
\end{proof}

As a consequence, we obtain the following generalization of Kaufmann's theorem (Lemma \ref{lm:kaufmann}) from Banach spaces to certain homogeneous spaces.

\begin{theorem}\label{th:homogeneous unbounded extendable}
Let $M$ be homogeneous and unbounded. Suppose that there is $C\in [1,\infty)$ such that all closed balls in $M$ are weak$^*$ $C$-Lipschitz extendable in $M$. Then $\lipfree{M}\sim\pare{\bigoplus_\NN\lipfree{M}}_1$.
\end{theorem}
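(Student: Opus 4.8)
The plan is to apply Pe\l czy\'nski's decomposition method with $X=\pare{\bigoplus_\NN\lipfree{M}}_1$ in the role of the space satisfying $X\sim\pare{\bigoplus_\NN X}_1$ (this holds by associativity of countable $\ell_1$-sums, since $\NN\times\NN$ is countable). One complementation is immediate: $\lipfree{M}$ is $1$-complemented in $X$ as its first coordinate, so $\lipfree{M}\compl X$. Hence, by the decomposition method, it suffices to prove the reverse relation $X\compl\lipfree{M}$, i.e.\ that $\pare{\bigoplus_\NN\lipfree{M}}_1$ is complemented in $\lipfree{M}$; this is the only substantive point.

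To establish $X\compl\lipfree{M}$, I would combine Proposition \ref{pr:homogeneous unbounded sequence} with Kalton's decomposition. Set $Z=\pare{\bigoplus_{n\in\NN}\lipfree{B(0,2^n)}}_1$. The hypothesis provides a single constant $C$ making every closed ball $B(0,2^n)$ weak$^*$ $C$-Lipschitz extendable in $M$, and these balls are bounded; since $M$ is homogeneous and unbounded, this is exactly the input needed to apply Proposition \ref{pr:homogeneous unbounded sequence} to the sequence $S_n=B(0,2^n)$. Using associativity of $\ell_1$-sums to rewrite the resulting double sum, this gives
$$
\pare{\bigoplus_\NN Z}_1 \compl \lipfree{M} .
$$
On the other hand, Lemma \ref{lm:kalton decomposition} states precisely that $\lipfree{M}\compl Z$. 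Since complemented embeddings are preserved under countable $\ell_1$-sums, we deduce $\pare{\bigoplus_\NN\lipfree{M}}_1\compl\pare{\bigoplus_\NN Z}_1$. Combining this with the previous display and using transitivity of $\compl$ yields $X=\pare{\bigoplus_\NN\lipfree{M}}_1\compl\lipfree{M}$.

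Having verified $X\compl\lipfree{M}\compl X$ together with $X\sim\pare{\bigoplus_\NN X}_1$, Pe\l czy\'nski's method concludes that $\lipfree{M}\sim X=\pare{\bigoplus_\NN\lipfree{M}}_1$. I do not expect a genuine obstacle at this stage, since all the hard work has been pushed upstream into Proposition \ref{pr:homogeneous unbounded sequence} and, through it, into Theorem \ref{th:lip extendable infinite union}: the uniform extendability constant across all balls is exactly what lets homogeneity scatter disjoint isometric copies of the balls far enough apart for the infinite-union extension theorem to apply. The only point requiring care here is recognizing that the Kalton sum $Z$ plays a double role, appearing both as the inner summand produced by the Proposition and, via Kalton's lemma, as a complemented superspace of $\lipfree{M}$.
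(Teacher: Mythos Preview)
Your proposal is correct and follows essentially the same approach as the paper: both combine Proposition \ref{pr:homogeneous unbounded sequence} (applied to the balls $B(0,2^n)$), Kalton's decomposition (Lemma \ref{lm:kalton decomposition}), and Pe\l czy\'nski's method. The only cosmetic difference is the choice of pivot space for Pe\l czy\'nski: the paper uses $\pare{\bigoplus_\NN Z}_1$ directly (observing it is isometric to its own $\ell_1$-sum and is sandwiched by $\lipfree{M}$ on both sides), whereas you use $X=\pare{\bigoplus_\NN\lipfree{M}}_1$ and insert the extra coordinatewise step $\pare{\bigoplus_\NN\lipfree{M}}_1\compl\pare{\bigoplus_\NN Z}_1$.
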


\begin{proof}
On one hand, by Lemma \ref{lm:kalton decomposition} we have
$$
\lipfree{M} \compl \pare{\bigoplus_{n\in\NN}\lipfree{B(0,2^n)}}_1 \compl \pare{\bigoplus_\NN\bigoplus_{n\in\NN}\lipfree{B(0,2^n)}}_1 .
$$
On the other hand, we have by Proposition \ref{pr:homogeneous unbounded sequence}
$$
\pare{\bigoplus_\NN\bigoplus_{n\in\NN}\lipfree{B(0,2^n)}}_1 \compl \lipfree{M} .
$$
As the space $\pare{\bigoplus_\NN\bigoplus_{n\in\NN}\lipfree{B(0,2^n)}}_1$ is clearly isometric to its countable $\ell_1$-sum, Pe\l czy\'nski's decomposition method shows that it is isomorphic to $\lipfree{M}$, and this ends the proof.
\end{proof}

\begin{remark}
In Proposition \ref{pr:homogeneous unbounded sequence} or Theorem \ref{th:homogeneous unbounded extendable}, if we only assume extendability rather than weak$^*$ extendability then, using the same argument, we obtain the corresponding conclusion for Lipschitz spaces and $\ell_\infty$-sums in place of Lipschitz-free spaces and $\ell_1$-sums. For instance, in Theorem \ref{th:homogeneous unbounded extendable} we obtain $\Lip_0(M)\sim\pare{\bigoplus_\NN\Lip_0(M)}_\infty$.
\end{remark}

We do not know of any infinite-dimensional Lipschitz-free space $\lipfree{M}$ that does not satisfy the conclusion of Theorem \ref{th:homogeneous unbounded extendable}.

\begin{question}\label{q:l1 sum}
Does $\lipfree{M}\sim\pare{\bigoplus_\NN\lipfree{M}}_1$ hold for every infinite metric space $M$? Does it hold under the additional assumption that $M$ is homogeneous?
\end{question}

\subsection{Nets in Banach spaces}
\label{sec:nets}

A subset $N$ of $M$ is called a \emph{net in $M$}, or a $(\varepsilon,\delta)$-net if we need to be more precise, if there exist $\varepsilon,\delta>0$ such that $N$ is
\begin{itemize}
\item \emph{$\varepsilon$-dense in $M$}, that is, for all $x\in M$ there exists $y\in N$ with $d(x,y)\leq\varepsilon$, and
\item \emph{$\delta$-separated}, that is, $d(x,y)\geq\delta$ for all $x\neq y\in N$.
\end{itemize}

We will now apply the previous results to address several open problems concerning the Lipschitz and Lipschitz-free spaces over a net $N_X$ in a Banach space $X$. We start by highlighting an important observation by H\'ajek and Novotn\'y according to which the choice of $N_X$ is irrelevant if we are only concerned with isomorphic structure \cite[Proposition 5]{HajekNovotny}.

\begin{lemma}[H\'ajek, Novotn\'y]\label{lm:hajek novotny}
If $N_1,N_2$ are nets in $M$ with the same cardinality, then $\lipfree{N_1}\sim\lipfree{N_2}$.
\end{lemma}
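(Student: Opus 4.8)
The plan is to reduce the statement to a single clean phenomenon: passing to a coarser subnet only changes the Lipschitz-free space by an $\ell_1$-summand. The case of finite nets is immediate: a free space over a finite metric space of cardinality $n$ has dimension $n-1$, so two finite nets of the same cardinality have equal-dimensional, hence isomorphic, free spaces. I would therefore assume from now on that $N_1,N_2$ are infinite; in that case both have cardinality equal to the density character of $M$, so the equal-cardinality hypothesis is automatic and plays no further role.

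The first key observation is that, unlike nearest-point maps out of the whole space $M$ (which are merely coarsely Lipschitz), a nearest-point assignment between two \emph{separated} nets is genuinely Lipschitz: if $P,Q$ are nets and $\rho:P\to Q$ sends each point to a nearest point of $Q$, then for $x\neq y$ in $P$ one has $d(\rho(x),\rho(y))\leq d(x,y)+2\varepsilon$, where $\varepsilon$ is the density constant of $Q$, and this is controlled by $d(x,y)$ because the latter is bounded below by the separation constant of $P$. I would exploit this twice. First, to match the two nets up to a coarse-scale bi-Lipschitz equivalence: fix a large radius $R$, let $N_1^\flat\subseteq N_1$ be a maximal $R$-separated subset (still a net in $M$, just coarser), and let $\psi:N_1^\flat\to N_2$ be a nearest-point map. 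For $R$ larger than twice the density constant of $N_2$, the map $\psi$ is injective and bi-Lipschitz, and its image $N_2^\flat=\psi(N_1^\flat)\subseteq N_2$ is again a coarse net in $M$. Hence $N_1^\flat$ and $N_2^\flat$ are bi-Lipschitz equivalent, so $\lipfree{N_1^\flat}\sim\lipfree{N_2^\flat}$.

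Second, I would compare each net with its own coarsening. Since $N_i^\flat\subseteq N_i$ are nets, the nearest-point map $r_i:N_i\to N_i^\flat$ is a Lipschitz retraction, so $N_i^\flat$ is a weak$^*$ Lipschitz extendable (in fact retract) subset of $N_i$; by the retract discussion and Fact \ref{clipfact} this gives a projection of $\lipfree{N_i}$ onto $\lipfree{N_i^\flat}$, i.e. a decomposition $\lipfree{N_i}\sim\lipfree{N_i^\flat}\oplus A_i$, where $A_i$ is the kernel of the induced map and equals the closed span of the short molecules $u_x=\delta(x)-\delta(r_i(x))$ for $x\in N_i\setminus N_i^\flat$. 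The crucial claim, and what I expect to be the main obstacle, is that $A_i\sim\ell_1$: the normalized molecules $u_x/\norm{u_x}$ should be equivalent to the unit vector basis of $\ell_1$ and span a complemented subspace, which is plausible because the points $x$ are uniformly separated while each $u_x$ is supported on a small pair $\set{x,r_i(x)}$. This is precisely the kind of estimate underlying Lemma \ref{lm:cdw}, and I would try either to adapt that argument or to organize the extra points into well-separated bounded blocks and invoke Lemma \ref{lm:metric l1 sum}; the delicate point is controlling the projection constant uniformly.

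Granting $A_i\sim\ell_1$, the proof closes at once:
\begin{equation*}
\lipfree{N_1}\sim\lipfree{N_1^\flat}\oplus\ell_1\sim\lipfree{N_2^\flat}\oplus\ell_1\sim\lipfree{N_2},
\end{equation*}
using the bi-Lipschitz equivalence $\lipfree{N_1^\flat}\sim\lipfree{N_2^\flat}$ in the middle step. I would also note that one need not pin $A_i$ down exactly: it suffices that $A_i$ be \emph{absorbable}, meaning $\lipfree{N_i^\flat}\oplus A_i\sim\lipfree{N_i^\flat}$, and since $\ell_1$ is complemented in every infinite-dimensional Lipschitz-free space by Lemma \ref{lm:cdw} (whence $\lipfree{N_i^\flat}\oplus\ell_1\sim\lipfree{N_i^\flat}$), any identification of the retraction kernel with an $\ell_1$-space will suffice. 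The heart of the matter, in every version of the argument, is exactly this identification.
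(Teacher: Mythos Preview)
The paper gives no proof of this lemma; it simply cites \cite[Proposition~5]{HajekNovotny}. Your plan is precisely the H\'ajek--Novotn\'y strategy: pass to a coarser subnet via a Lipschitz nearest-point retraction, identify the complement of the retract as an $\ell_1$-space, and absorb it.

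The step you flag as the main obstacle is in fact easy. If $N$ is $\delta$-separated and $r:N\to N'$ is the nearest-point retraction onto a subnet, the molecules $m_x=\delta(x)-\delta(r(x))$, $x\in N\setminus N'$, have norms in $[\delta,R]$ for some $R<\infty$, and for any choice of signs $\epsilon_x\in\{\pm1\}$ the function taking the value $\epsilon_x\delta/2$ at each $x\in N\setminus N'$ and $0$ on $N'$ is $1$-Lipschitz on $N$ (all its differences are at most $\delta\leq d(x,y)$). Testing against it gives $\bigl\|\sum a_xm_x\bigr\|\geq(\delta/2)\sum|a_x|$, so the normalised $m_x$ are equivalent to the unit vector basis and $A_i\sim\ell_1(N_i\setminus N_i^\flat)$. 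No appeal to Lemma~\ref{lm:metric l1 sum} or to block decompositions is needed.

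Two caveats. Your aside that infinite nets in $M$ automatically have cardinality $\dens(M)$ is false for general $M$: take $M=\NN\times\Gamma$ with $\Gamma$ an uncountable set carrying the $0$--$1$ metric and the sum metric on $M$; then both $M$ itself and $\NN\times\{\gamma_0\}$ are nets, of cardinalities $|\Gamma|$ and $\aleph_0$ respectively. (The claim \emph{is} true in Banach spaces, which is all the paper actually uses.) Relatedly, Lemma~\ref{lm:cdw} only supplies a complemented countable $\ell_1$, so your absorption step as written covers only countable nets; for larger cardinals one must exhibit a complemented $\ell_1(|N_i|)$ inside $\lipfree{N_i^\flat}$, which requires a further argument (another pass of the retraction-and-molecule trick, arranged so that the coarsening removes $|N_i|$ many points). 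These are refinements rather than structural gaps in your plan.
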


First, we consider Question \ref{q:l1 sum} for the particular case of nets in Banach spaces. This has been asked in \cite[Question 6.5]{AACD3} in a more general framework; the (weaker) dual question has been posed in \cite[Question 4]{CCD}. The question is also implicit in \cite{HajekNovotny}, where it is answered under additional assumptions on the Banach space $X$. We will now give an affirmative answer for all $X$. We shall use the fact that balls in nets, like balls in Banach spaces, are Lipschitz retracts.

\begin{lemma}\label{lm:net ball retract}
Let $X$ be a Banach space and $N_X$ a net in $X$. Then there exists $C\in [1,\infty)$ such that, for every closed ball $B$ in $X$, the set $B\cap N_X$ is either empty or a $C$-Lipschitz retract of $N_X$.
\end{lemma}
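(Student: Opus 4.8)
Let $N_X$ be an $(\varepsilon,\delta)$-net in $X$ and write a closed ball as $B=B(c,r)$. I may assume $B\cap N_X\neq\emptyset$. The plan is to build a retraction $\rho:N_X\to B\cap N_X$ by \emph{correcting} the radial retraction of $X$ onto $B$ so that its values land in the net, and to control its Lipschitz constant through the separation of $N_X$ rather than through continuity of the correction.

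The starting point is the classical fact that the radial retraction $P:X\to B$, given by $P(x)=c+r\tfrac{x-c}{\max\set{r,\norm{x-c}}}$, is $2$-Lipschitz on every normed space $X$. The key mechanism I will use is the following elementary observation: if $g:N_X\to X$ is \emph{any} map and $G:X\to X$ is $L$-Lipschitz with $\sup_{x\in N_X}\norm{g(x)-G(x)}\le A$, then for $x\neq y\in N_X$ one has $\norm{g(x)-g(y)}\le 2A+L\norm{x-y}$, and since $\norm{x-y}\ge\delta$ this yields $\lipnorm{g}\le L+2A/\delta$. Thus any map on $N_X$ that stays uniformly close to $P$ is automatically Lipschitz, with a constant depending only on $\varepsilon,\delta$.

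With $G=P$ (so $L=2$), I define $\rho$ by $\rho(x)=x$ for $x\in B\cap N_X$ and, for $x\in N_X\setminus B$, by choosing $\rho(x)$ to be a net point lying in $B$ and within distance $2\varepsilon$ of $P(x)$. This is a retraction onto $B\cap N_X$ by construction, and $\norm{\rho(x)-P(x)}\le 2\varepsilon$ for every $x\in N_X$ (the bound is trivial on $B\cap N_X$, where $P$ is the identity). The observation above then gives $\lipnorm{\rho}\le 2+4\varepsilon/\delta=:C$, a constant independent of $B$, completing the argument.

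The one point requiring care — and the main obstacle — is showing that such a net point exists for every exterior $x$, i.e. that there is always a point of $N_X$ \emph{inside} $B$ that is close to the projection $P(x)$ on the boundary sphere. A net point guaranteed by $\varepsilon$-density near $P(x)$ may lie just outside $B$. I will handle this by first shifting inward: when $r\ge\varepsilon$, replace $P(x)$ by its radial projection $y$ onto the concentric ball $B(c,r-\varepsilon)$, which satisfies $\norm{y-P(x)}=\varepsilon$; then $\varepsilon$-density provides a net point $w$ with $\norm{w-y}\le\varepsilon$, and the triangle inequality forces $\norm{w-c}\le\varepsilon+(r-\varepsilon)=r$, so $w\in B\cap N_X$ and $\norm{w-P(x)}\le 2\varepsilon$. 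When $r<\varepsilon$ the ball is small: any fixed net point $w_0\in B\cap N_X$ lies within $\diam(B)\le 2r<2\varepsilon$ of $P(x)\in B$, so setting $\rho(x)=w_0$ suffices. This completes the case analysis and hence the construction.
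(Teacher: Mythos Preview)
Your proof is correct and follows essentially the same approach as the paper: both compose the $2$-Lipschitz radial retraction onto $B$ with a ``correction'' into $B\cap N_X$ of displacement at most $2\varepsilon$ (the paper phrases this as showing $B\cap N_X$ is a $(2\varepsilon,\delta)$-net in $B$ and then applying a nearest-point map), and both exploit the $\delta$-separation of $N_X$ to convert the additive $2\varepsilon$ error into the multiplicative constant $C=2+4\varepsilon/\delta$. Your case split $r\geq\varepsilon$ versus $r<\varepsilon$ and the inward radial shift are exactly the paper's argument for the $(2\varepsilon,\delta)$-net claim.
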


\begin{proof}
Suppose that $N_X$ is a $(\varepsilon,\delta)$-net in $X$ for some $\varepsilon,\delta>0$. Let $B=B_X(p,R)$ be a closed ball in $X$, where $p\in X$ and $R>0$, and suppose that $B\cap N_X\neq\varnothing$. We claim that $B\cap N_X$ is a $(2\varepsilon,\delta)$-net in $B$. Indeed, if $R\leq\varepsilon$ then this is clear as $\diam(B)\leq 2\varepsilon$ and $B\cap N_X$ is non-empty. On the other hand, if $R>\varepsilon$ then for every $x\in B$ there exists $y\in B(p,R-\varepsilon)$ with $\norm{x-y}\leq\varepsilon$, and also $z\in N_X$ with $\norm{y-z}\leq\varepsilon$. Thus $\norm{x-z}\leq 2\varepsilon$ and $z\in B$ as $\norm{p-z}\leq\norm{p-y}+\norm{y-z}\leq R$.

Now let $r:X\to B$ be the radial projection given by
$$
r(x) = \begin{cases}
p+R\cdot\dfrac{x-p}{\norm{x-p}} &\text{, if $x\notin B$} \\
x &\text{, if $x\in B$}
\end{cases}
$$
which is known to be a $2$-Lipschitz retraction, and let $\pi:B\to B\cap N_X$ be any nearest-point map. Put $\varphi:=\pi\circ r\restrict_{N_X}:N_X\to B\cap N_X$. For $x\in B\cap N_X$ we have $r(x)=x$ (as $x\in B$) and $\pi(x)=x$ (as $x\in N_X$), thus $\varphi$ is a retraction onto $B\cap N_X$. If $x\neq y\in N_X$, then
\begin{align*}
\norm{\varphi(x)-\varphi(y)} &\leq \norm{\pi(r(x))-r(x)} + \norm{r(x)-r(y)} + \norm{r(y)-\pi(r(y))} \\
&\leq 2\varepsilon + 2\norm{x-y} + 2\varepsilon \leq \pare{2+4\frac{\varepsilon}{\delta}}\norm{x-y}
\end{align*}
as $\norm{x-y}\geq\delta$. Thus the result holds with $C=2+4\frac{\varepsilon}{\delta}$, independent of $B$.
\end{proof}

\begin{theorem}\label{th:net l1 sum}
Let $X$ be a Banach space and $N_X$ a net in $X$. Then $\lipfree{N_X}\sim\pare{\bigoplus_\NN\lipfree{N_X}}_1$.
\end{theorem}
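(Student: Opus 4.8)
The plan is to run Pe\l czy\'nski's decomposition method exactly as in the proof of Theorem \ref{th:homogeneous unbounded extendable}. The difficulty is that a net $N_X$ is not itself homogeneous, so I cannot apply Proposition \ref{pr:homogeneous unbounded sequence} to it directly; instead I would exploit the homogeneity of the \emph{ambient} space $X$ to move copies of balls around, and identify the moved copies with the original balls by means of H\'ajek--Novotn\'y's Lemma \ref{lm:hajek novotny}. First I would dispose of the finite-dimensional case, which is already contained in \cite[Corollary 5.10]{AACD3}, and assume henceforth that $X$ is infinite-dimensional; then every net in $X$ has cardinality equal to $\dens(X)$, a fact I will use repeatedly. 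Write $S_n=B_{N_X}(0,2^n)$ and set $Z=\pare{\bigoplus_\NN\bigoplus_n\lipfree{S_n}}_1$, which is isometric to its own countable $\ell_1$-sum. The goal is to show $\lipfree{N_X}\compl Z\compl\lipfree{N_X}$, after which Pe\l czy\'nski's method yields $\lipfree{N_X}\sim Z\sim\pare{\bigoplus_\NN\lipfree{N_X}}_1$.

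The bound $\lipfree{N_X}\compl Z$ is immediate from Kalton's decomposition: Lemma \ref{lm:kalton decomposition} gives $\lipfree{N_X}\compl\pare{\bigoplus_n\lipfree{S_n}}_1$, and the latter is clearly complemented in $Z$ (which carries countably many copies of each summand). So the real content is the reverse relation $Z\compl\lipfree{N_X}$.

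For the reverse relation I would mimic the construction in Proposition \ref{pr:homogeneous unbounded sequence}. Choose anchor points $p_{n,k}\in X$, $n,k\in\NN$, with $\norm{p_{n,k}}\to\infty$ and spaced dyadically far apart (as in that proof), and put $T_{n,k}=N_X\cap B_X(p_{n,k},2^n)$. Spacing the centers appropriately makes the family $\set{T_{n,k}}$ well-separated with respect to the base point and guarantees $\diam(T_{n,k})\leq D\cdot d(T_{n,k},0)$ for a fixed $D$. Each $T_{n,k}$ is an intersection of $N_X$ with a ball of $X$, hence a $C$-Lipschitz retract of $N_X$ by Lemma \ref{lm:net ball retract}, and therefore weak$^*$ $C$-Lipschitz extendable. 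Thus $\set{T_{n,k}}$ meets all hypotheses of Theorem \ref{th:lip extendable infinite union}, so $\bigcup_{n,k}T_{n,k}$ is weak$^*$ Lipschitz extendable in $N_X$; combining Fact \ref{clipfact}(b) with Lemma \ref{lm:metric l1 sum} (the sets are well-separated) gives $\pare{\bigoplus_{n,k}\lipfree{T_{n,k}}}_1\compl\lipfree{N_X}$. It remains to identify this with $Z$. Here homogeneity enters: the ball $B_X(p_{n,k},2^n)$ is isometric to $B_X(0,2^n)$ via the translation $\tau_{-p_{n,k}}$, which carries $T_{n,k}$ onto a net of $B_X(0,2^n)$ of cardinality $\dens(X)$, the same cardinality as $S_n$. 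Lemma \ref{lm:hajek novotny} then yields $\lipfree{T_{n,k}}\sim\lipfree{S_n}$, and since all these nets are balls of the single net $N_X$ they share the same separation and density constants, so these isomorphisms are uniform; passing to $\ell_1$-sums gives $\pare{\bigoplus_{n,k}\lipfree{T_{n,k}}}_1\sim Z$, whence $Z\compl\lipfree{N_X}$.

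The step I expect to be the main obstacle is precisely this last identification in the third paragraph: one needs the far-away pieces $T_{n,k}$ to be \emph{simultaneously} honest ball-intersections of $N_X$ (so that Lemma \ref{lm:net ball retract} produces the retraction), well-separated and diameter-controlled relative to the base point (so that Theorem \ref{th:lip extendable infinite union} applies), and to have free spaces \emph{uniformly} isomorphic to those of the central balls $S_n$ across all indices $(n,k)$. The uniformity is the delicate point, since the balls range over unbounded radii $2^n$; it should follow from the fact that H\'ajek--Novotn\'y's isomorphism between two nets of equal cardinality is built from local moves and so its norm depends only on the common net parameters of $N_X$ rather than on the radius. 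Everything else --- the dyadic placement of the $p_{n,k}$, the verification of conditions (ii) and (iii) of Theorem \ref{th:lip extendable infinite union}, and the final application of Pe\l czy\'nski's method --- is routine and parallels Theorem \ref{th:homogeneous unbounded extendable} verbatim.
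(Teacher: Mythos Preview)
Your strategy is sound in outline, but the paper takes a markedly simpler route that sidesteps precisely the obstacle you flag. Rather than work with an arbitrary net and exploit the homogeneity of the ambient $X$, the paper applies Lemma~\ref{lm:hajek novotny} \emph{once} at the outset to replace $N_X$ by a net that is itself an additive subgroup of $X$ (such a net exists by \cite[Theorem C]{DBRS} when $\dim X=\infty$, and one takes $\ZZ^n$ otherwise). That net is then a homogeneous metric space in its own right, so Theorem~\ref{th:homogeneous unbounded extendable} applies directly after checking uniform extendability of its balls via Lemma~\ref{lm:net ball retract}. The whole proof is three lines.

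Your version, by contrast, requires infinitely many invocations of Lemma~\ref{lm:hajek novotny}, one for each pair $(T_{n,k},S_n)$, applied in varying ambient balls $B_X(0,2^n)$, and you need the resulting isomorphism constants to be bounded uniformly in $n$. That uniformity is plausible---the H\'ajek--Novotn\'y argument is indeed local, and its constant should depend only on the ratio $\varepsilon/\delta$ of net parameters---but it is not asserted by Lemma~\ref{lm:hajek novotny} as stated, and you would have to extract it from the original proof in \cite{HajekNovotny}. So this is a genuine (if likely surmountable) gap. The paper's reduction to a homogeneous net eliminates the issue: once $N_X$ is a subgroup, the translated ball-intersections $T_{n,k}$ are \emph{literally isometric} to $S_n$, not merely isomorphic in free space, and no uniformity question arises. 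The trade-off is that the paper imports a nontrivial external result (\cite{DBRS}) which your approach does not need.
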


\begin{proof}
By Lemma \ref{lm:hajek novotny}, it is enough to prove the theorem for one specific net $N_X$ in $X$. We may choose a net $N_X$ that is an additive subgroup of $X$: if $X$ is infinite-dimensional then such a net exists by \cite[Theorem C]{DBRS}, otherwise we may assume $X=\RR^n$ for some $n\in\NN$ and take $N_X=\ZZ^n$. Then $N_X$ is homogeneous as a metric space, as translations in $X$ are isometries. Moreover, by Lemma \ref{lm:net ball retract} there exists $C<\infty$ such that every ball in $N_X$ is a $C$-Lipschitz retract of $N_X$, hence weak$^*$ $C$-Lipschitz extendable in $N_X$. The result now follows by applying Theorem \ref{th:homogeneous unbounded extendable}.
\end{proof}

The second problem we consider is whether $\Lip_0(N_X)$ must be isomorphic to $\Lip_0(X)$. 
Note that this question only makes sense for Lipschitz spaces, as $\lipfree{N_X}$ is never isomorphic to $\lipfree{X}$: the former has the Radon-Nikod\'ym property and the latter does not, e.g. by \cite[Theorem C]{AGPP}.
The question was first posed in \cite[Question 3]{CCD} after proving that it holds when $X$ is finite-dimensional. In fact, in that setting we even have $\Lip_0(M)\sim\Lip_0(X)$ whenever $M$ is merely non-porous in $X$ \cite{Aliaga25}. However, finite dimensionality is essential in these arguments, via absolute Lipschitz extendability. 

While we cannot fully answer the question, we provide a partial answer by showing that $\Lip_0(X)\compl\Lip_0(N_X)$ for every Banach space $X$. This was already shown in \cite[Corollary 3.5]{CCD} under additional assumptions on $X$. The question is thus reduced to determining whether the reverse complementation relation $\Lip_0(N_X)\compl\Lip_0(X)$ holds; if it does, isomorphism then follows by Pe\l czy\'nski's method.

\begin{theorem}\label{th:lip net complemented}
Let $X$ be a Banach space and $N_X$ a net in $X$. Then $\Lip_0(X)\compl\Lip_0(N_X)$.
\end{theorem}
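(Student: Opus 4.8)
The plan is to realize $\Lip_0(X)$ as a complemented subspace of a countable $\ell_\infty$-sum of copies of $\Lip_0(N_X)$, and then to collapse that sum back to $\Lip_0(N_X)$ using the self-similarity provided by Theorem \ref{th:net l1 sum}. The engine producing the complemented embedding is Lemma \ref{lm:lip density}, applied not to $N_X$ itself but to a sequence of \emph{rescaled} copies of the net that approximate $X$ with increasing precision.

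First I would reduce to a convenient net. By Lemma \ref{lm:hajek novotny} the isomorphism class of $\lipfree{N_X}$, and hence by duality that of $\Lip_0(N_X)$, does not depend on the choice of net, so I may take $N_X$ to be an additive subgroup of $X$ exactly as in the proof of Theorem \ref{th:net l1 sum}; in particular $0\in N_X$. Fix scalars $t_n\downarrow 0$, say $t_n=2^{-n}$, and set $S_n=t_n N_X$. If $N_X$ is an $(\ep,\delta)$-net then $S_n$ is a $(t_n\ep,t_n\delta)$-net in $X$ containing $0$, so that for each $x\in X$ there are points $x_n\in S_n$ with $\norm{x-x_n}\leq t_n\ep\to 0$. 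Thus $(S_n)$ satisfies the hypothesis of Lemma \ref{lm:lip density} with $M=X$, and that lemma yields
$$
\Lip_0(X) \compl \pare{\bigoplus_{n\in\NN} \Lip_0(S_n)}_\infty .
$$
Moreover the dilation $x\mapsto t_n x$ is a bijective similarity between $N_X$ and $S_n$ fixing the base point, so $\Lip_0(S_n)\equiv\Lip_0(N_X)$ for every $n$, whence $\pare{\bigoplus_{n} \Lip_0(S_n)}_\infty \equiv \pare{\bigoplus_\NN \Lip_0(N_X)}_\infty$.

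It then remains to identify this $\ell_\infty$-sum with $\Lip_0(N_X)$ itself. Since $\Lip_0(N_X)=\lipfree{N_X}^*$ and the dual of an $\ell_1$-sum is the $\ell_\infty$-sum of the duals, Theorem \ref{th:net l1 sum} gives
$$
\Lip_0(N_X) = \lipfree{N_X}^* \sim \pare{\bigoplus_\NN \lipfree{N_X}}_1^{*} \equiv \pare{\bigoplus_\NN \Lip_0(N_X)}_\infty ,
$$
and combining the two displays yields $\Lip_0(X)\compl\Lip_0(N_X)$. I expect no serious obstacle, since every ingredient is already in place; the only points requiring care are verifying that the rescaled nets meet the density hypothesis of Lemma \ref{lm:lip density} with a common base point (which is why one reduces to a net that is a subgroup) and checking that the dilations act isometrically on the Lipschitz spaces. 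The genuinely substantive input has already been established, namely the $\ell_1$-decomposition of $\lipfree{N_X}$ in Theorem \ref{th:net l1 sum}; here it is only invoked through its dual formulation.
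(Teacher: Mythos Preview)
Your argument is correct, and it takes a genuinely different route from the paper. The paper does not invoke Theorem~\ref{th:net l1 sum} here at all: instead it works directly inside $N_X$, placing well-separated balls $S_n=B_X(x_n,2^n)$ with $\norm{x_n}=2^{n+2}$, using Lemma~\ref{lm:net ball retract} to see that each $S_n\cap N_X$ is a uniform Lipschitz retract of $N_X$, and then applying Theorem~\ref{th:lip extendable infinite union} to obtain $\pare{\bigoplus_n\lipfree{S_n\cap N_X}}_1\compl\lipfree{N_X}$. A separate rescaling argument shows that the translated, dilated copies $2^{-n}(S_n\cap N_X - x_n)$ exhaust $B_X$, so Lemma~\ref{lm:lip density} gives $\Lip_0(B_X)\compl\pare{\bigoplus_n\Lip_0(S_n\cap N_X)}_\infty$, and Kaufmann finishes. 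Your approach short-circuits all of this by feeding \emph{global} rescalings $t_nN_X$ into Lemma~\ref{lm:lip density} and then collapsing the resulting $\ell_\infty$-sum with the dual of Theorem~\ref{th:net l1 sum}. This is shorter and conceptually cleaner once Theorem~\ref{th:net l1 sum} is available; the paper's version, by contrast, is logically independent of Theorem~\ref{th:net l1 sum} (and hence of the subgroup-net construction from \cite{DBRS}) and showcases another direct use of Theorem~\ref{th:lip extendable infinite union}. One minor remark: the subgroup reduction is more than you need---a translation so that $0\in N_X$ would already suffice for your purposes, since the only feature you use is that the rescalings share the base point.
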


\begin{proof}
We make a construction similar to that of Proposition \ref{pr:homogeneous unbounded sequence} using a sequence of balls in $X$. For $n\in\NN$, let $S_n=B_X(x_n,2^n)$ where $x_n\in X$ is chosen such that $d(x_n,0)=2^{n+2}$. We claim that the sets $S_n\cap N_X$ satisfy the hypotheses of Theorem \ref{th:lip extendable infinite union} with respect to the metric space $N_X$. Indeed, weak$^*$ $C$-Lipschitz extendability with a uniform constant $C$ is guaranteed by Lemma \ref{lm:net ball retract}, and condition (ii) is satisfied with $D=1$. Given $x\in S_n$ and $y\in S_m$ with $n>m$, we have
\begin{align*}
d(x,0) + d(y,0) &\leq 2^{n+2}+2^n + 2^{m+2}+2^m < 10\cdot 2^n = 20\cdot (2^{n+2} - 2^n - 2^{n+1} - 2^{n-1}) \\
&\leq 20\cdot (2^{n+2} - 2^n - 2^{m+2} - 2^m) \leq 20\,(d(x,0)-d(y,0)) \leq 20\,d(x,y)
\end{align*}
thus condition (iii) is satisfied with $\lambda=20$. It follows that $\pare{\bigoplus_{n\in\NN}\lipfree{S_n\cap N_X}}_1 \compl \lipfree{N_X}$.

On the other hand, we may repeat the construction from \cite[Proposition 3.5]{Aliaga25} to show that $\Lip_0(B_X) \compl \pare{\bigoplus_{n\in\NN}\Lip_0(S_n\cap N_X)}_\infty$. Indeed, let $M_n=2^{-n}(S_n\cap N_X-x_n)$ (that is, we translate and rescale the set $S_n\cap N_X$ so that it becomes a subset of $B_X$). Because $N_X$ is $\varepsilon$-dense in $X$ for some $\varepsilon>0$, $S_n\cap N_X$ is $2\varepsilon$-dense in $S_n$ for $n$ large enough (by the same argument as in the proof of Lemma \ref{lm:net ball retract}), therefore $M_n$ is $2^{-(n-1)}\varepsilon$-dense in $B_X$ for $n$ large enough, and Lemma \ref{lm:lip density} implies that $\Lip_0(B_X) \compl \pare{\bigoplus_{n\in\NN}\Lip_0(M_n)}_\infty$. However, $\Lip_0(M_n)$ is isometric to $\Lip_0(S_n\cap N_X)$ as Lipschitz spaces are invariant under dilations, so this finishes the proof of our claim. We conclude by recalling that $\lipfree{X}\sim\lipfree{B_X}$ by Lemma \ref{lm:kaufmann}.
\end{proof}

\subsection{The integer grid in \texorpdfstring{$\ell_1$}{l1}}
\label{sec:grids}

Next, we consider the case of \emph{grids} in a Banach space $X$, that is, sets of the form
$$
\ZZ_X = \set{x\in X \,:\, x=\sum_{n=1}^N a_ne_n \text{ with $a_n\in\ZZ$ for all $n$}}
$$
where $(e_n)$ is a fixed Schauder basis of $X$. We will focus on the case where $(e_n)$ is the standard basis in $X=\ell_1$. The grid $\ZZ_{\ell_1}$ is an additive subgroup of $\ell_1$ but not a net in $\ell_1$; therefore, we may apply the results from Section \ref{sec:homogeneous} to it but not those from Section \ref{sec:nets}.

The particular geometry of $\ell_1$ allows for the construction of relatively simple extension operators. This was exploited in \cite{HajekPernecka,LancienPernecka} to construct Schauder bases in $\lipfree{\ell_1^n}$ and $\lipfree{\ell_1}$. We will begin by listing some of these sets admitting extension operators. For a subset $I\subset\NN$, we shall denote by $\ell_1(I)$ the subspace of $\ell_1$ consisting of elements whose support is contained in $I$, and identify $\ell_1^n$ with $\ell_1(\set{1,2,\ldots,n})$ for $n\in\NN$.

The following lemma is well known to specialists, although it cannot be found explicitly in the existing literature.

\newpage
\begin{lemma}
\label{lm:lemma_extension_Z_ell1}
The following subsets of $\ell_1$ are weak$^*$ $1$-Lipschitz extendable in $\ell_1$:
\begin{enumerate}[label={\upshape{(\alph*)}}]
\item $\ell_1(I)$ for $I\subset\NN$,
\item $Q_r:=\set{x\in \ell_1 \,:\, \abs{x_n}\leq r \text{ for all $n$}}$ for $r>0$,
\item $\ZZ_{\ell_1}$,
\item $\ZZ_{\ell_1(I)}:=\ZZ_{\ell_1}\cap\ell_1(I)$,
\item $\ZZ_{\ell_1}\cap Q_r$.
\end{enumerate}
\end{lemma}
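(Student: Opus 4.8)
The plan is to treat part (c) as the crux and obtain (a), (b), (d), (e) from it by composing $1$-Lipschitz retractions, using the elementary observation that weak$^*$ $C$-Lipschitz extendability \emph{composes}: if $A\subset B\subset M$ with $A$ weak$^*$ extendable in $B$ and $B$ weak$^*$ extendable in $M$, then composing the two extension operators gives a weak$^*$ extension operator for $A$ in $M$, with the product of the norms. For (a) and (b) I would write down explicit $1$-Lipschitz retractions of $\ell_1$: the coordinate projection $x\mapsto(x_n\mathbf{1}_{n\in I})_n$ onto $\ell_1(I)$, and the coordinatewise truncation $x\mapsto(\max\{-r,\min\{r,x_n\}\})_n$ onto $Q_r$. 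Both are $1$-Lipschitz because the $\ell_1$-norm is additive over coordinates and each coordinate map is $1$-Lipschitz, and each fixes its target pointwise; the operator $f\mapsto f\circ r$ is then a weak$^*$ $1$-Lipschitz extension operator (weak$^*$-continuous since $r$ is continuous). For (d) and (e) I would instead exhibit $1$-Lipschitz retractions \emph{within} $\ZZ_{\ell_1}$: the same coordinate projection maps $\ZZ_{\ell_1}$ onto $\ZZ_{\ell_1(I)}$, and coordinatewise truncation to $\{-m,\dots,m\}$ with $m=\lfloor r\rfloor$ maps $\ZZ_{\ell_1}$ onto $\ZZ_{\ell_1}\cap Q_r=\ZZ_{\ell_1}\cap Q_m$ (if $r<1$ this set is $\{0\}$ and there is nothing to prove). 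Composing these retraction operators with the operator from (c) then yields (d) and (e).

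The heart is (c). No retraction can exist here, since $\ell_1$ is connected while $\ZZ_{\ell_1}$ is totally disconnected, so I would build an explicit interpolation operator, the infinite-dimensional analogue of piecewise-linear interpolation of $\ZZ$ in $\RR$. For $k\in\ZZ$ let $\phi_k(t)=\max\{0,1-\abs{t-k}\}$ be the usual hat function, so that $\sum_{k\in\ZZ}\phi_k\equiv 1$ and at most two consecutive $\phi_k$ are nonzero at any $t$. For $x\in\ell_1$ and $k\in\ZZ_{\ell_1}$ set $w_k(x)=\prod_n\phi_{k_n}(x_n)$ and define the extension by $Ef(x)=\sum_{k\in\ZZ_{\ell_1}}f(k)\,w_k(x)$, giving an operator $\Lip_0(\ZZ_{\ell_1})\to\Lip_0(\ell_1)$. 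The first task is well-definedness. Viewing $(\phi_j(x_n))_{j\in\ZZ}$ as a probability vector on $\ZZ$ for each $n$, the numbers $w_k(x)$ are the weights of the corresponding product measure, and the law of a random sequence $K=(K_n)$ distributed accordingly; since $\sum_n\abs{x_n}<\infty$, a Borel--Cantelli argument (equivalently, convergence of $\prod_n(1-\abs{x_n})$) shows this measure is concentrated on finitely supported sequences, so $\sum_{k\in\ZZ_{\ell_1}}w_k(x)=1$ and $f$ is only ever evaluated at legitimate grid points. Because $\abs{f(k)}\leq\lipnorm{f}\norm{k}_1$ and $\sum_k\norm{k}_1 w_k(x)=\sum_n\mathbb{E}\abs{K_n}<\infty$, the series converges absolutely. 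Linearity is clear, $Ef$ extends $f$ because $w_k(x)=\delta_{k,x}$ for $x\in\ZZ_{\ell_1}$, and $Ef(0)=0$; pointwise (weak$^*$) continuity of $E$ follows from dominated convergence in $k$.

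It remains to show $\lipnorm{Ef}\leq\lipnorm{f}=:L$, where the genuine work lies. The clean estimate is the \emph{single-coordinate} bound: if $x,y$ agree in all coordinates but the $j$-th, then grouping the series by the value of $k_j$ and factoring out the common weights $\prod_{n\neq j}\phi_{k_n}(x_n)$ (which sum to $1$) expresses $Ef(x)-Ef(y)$ as an average, over the other coordinates, of increments of one-dimensional piecewise-linear interpolants $t\mapsto\sum_{k_j}f(\dots,k_j,\dots)\phi_{k_j}(t)$; each such interpolant is $L$-Lipschitz since consecutive grid points are at $\ell_1$-distance $1$, giving $\abs{Ef(x)-Ef(y)}\leq L\abs{x_j-y_j}$. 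Telescoping over any \emph{finite} set of coordinates shows $Ef$ is $L$-Lipschitz between points differing in finitely many coordinates. To reach arbitrary $x,y\in\ell_1$ I would approximate $y$ by the points $y^{(N)}$ agreeing with $y$ on the first $N$ coordinates and with $x$ afterwards: these differ from $x$ in finitely many coordinates, so $\abs{Ef(x)-Ef(y^{(N)})}\leq L\norm{x-y}_1$, and $y^{(N)}\to y$ in $\ell_1$. The last ingredient is genuine continuity of $Ef$ on $\ell_1$, so that $Ef(y^{(N)})\to Ef(y)$: each $w_k$ is continuous, hence $w_k(x^{(i)})\to w_k(y)$ pointwise, and the total masses $\sum_k\norm{k}_1 w_k(x^{(i)})=\sum_n\mathbb{E}\abs{K_n}$ converge because $\norm{x^{(i)}}_1\to\norm{y}_1$; Scheff\'e's lemma then gives $\sum_k\norm{k}_1\abs{w_k(x^{(i)})-w_k(y)}\to 0$, which dominates $\abs{Ef(x^{(i)})-Ef(y)}$.

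I expect the main obstacle to be located entirely in (c), and specifically in the infinite-dimensional bookkeeping: guaranteeing that the product measure sits on $\ZZ_{\ell_1}$ (so that the defining series never references non-summable integer sequences), and upgrading the transparent single-coordinate and finite-telescoping Lipschitz estimates to all of $\ell_1$, which forces one to establish continuity of $Ef$ as a separate step rather than reading it off the Lipschitz bound. Once (c) is in hand, parts (a), (b), (d), (e) are formal consequences of the retraction-and-composition scheme described above.
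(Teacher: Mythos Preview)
Your proposal is correct and constructs the same extension operator as the paper, but the route to establishing the norm bound differs in ways worth noting. For (c), the paper works first on each $\ell_1^n$, citing Lancien--Perneck\'a for the key fact that the unique coordinatewise-affine extension on a unit hypercube preserves the Lipschitz constant, glues across hypercubes and across dimensions using compatibility on faces, and then extends from the dense set $\bigcup_n\ell_1^n$ to all of $\ell_1$; this density trick sidesteps every infinite-product convergence issue, since on finitely supported vectors the product $\prod_n\phi_{k_n}(x_n)$ has only finitely many nontrivial factors. Your direct construction via the product weights and the probabilistic reading is the \emph{same} operator, but you pay for the directness with the Borel--Cantelli and Scheff\'e bookkeeping you describe; the paper's approach is cheaper, though yours makes the structure of the operator more transparent and yields the pleasant identity $\sum_k\norm{k}_1 w_k(x)=\norm{x}_1$ along the way. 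For (d) and (e), you reduce to (c) by composing with the $1$-Lipschitz retractions of $\ZZ_{\ell_1}$ onto $\ZZ_{\ell_1(I)}$ and onto $\ZZ_{\ell_1}\cap Q_{\lfloor r\rfloor}$, which is arguably cleaner than the paper's brief ``similar argument as for part (c)'' that rebuilds the cubewise interpolation on each restricted domain.
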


\begin{proof}
Case (a) is clear as the standard linear projection $P_I:\ell_1\to\ell_1(I)$ is a $1$-Lipschitz retraction. For case (b), the $1$-Lipschitz retraction $R_r:\ell_1\to Q_r$ is constructed by retracting coordinatewise onto $[-r,r]\subset\RR$.

For the remaining cases, we require the following construction by Lancien and Perneck\'a \cite{LancienPernecka}. Fix $n\in\NN$ and let $Q$ be a unit hypercube in $\ell_1^n$ with vertices $V_Q$ in $\ZZ_{\ell_1}$. More precisely, fix $v\in\ZZ_{\ell_1^n}$ and let $Q=\conv(V_Q)$ with set of vertices $V_Q=\set{v_\gamma\,:\,\gamma\in\set{0,1}^n}$ where we denote $v_\gamma=v+\sum_{i=1}^n\gamma_ne_n$. Then, for any function $f:V_Q\to\RR$, we consider the interpolation function $\Lambda_Qf:Q\to\RR$ given by
$$
\Lambda_Qf(x) = \sum_{\gamma\in\set{0,1}^n} f(v_\gamma)\cdot\prod_{i=1}^n \pare{1-\gamma_i+(-1)^{1-\gamma_i}(x_i-v_i)} 
$$
This is the unique extension of $f$ to $Q$ that is moreover coordinatewise affine, i.e. its restriction to any segment parallel to any $e_i$, $i\leq n$ is an affine function. Crucially, we have $\lipnorm{\Lambda_Qf}=\lipnorm{f}$. See \cite[Section 3]{PerneckaSmith} or \cite[Lemma 3.2]{LancienPernecka} for the details.

Note that, for a given function $f\in\Lip(\ZZ_{\ell_1^n})$, the functions $\Lambda_Q(f\restrict_{V_Q})$ can be glued together nicely, i.e. if hypercubes $Q,Q'$ have non-empty intersection then $\Lambda_Q(f\restrict_{V_Q})$ and $\Lambda_{Q'}(f\restrict_{V_{Q'}})$ agree on $Q\cap Q'$: this follows immediately from coordinatewise affinity. Thus the extension $\Lambda_nf:\ell_1^n\to\RR$ of $f$, given by $\Lambda_nf(x)=\Lambda_Q(f\restrict_{V_Q})(x)$ where $x\in Q$, is well-defined. Moreover $\lipnorm{\Lambda_nf}=\lipnorm{f}$, as is easily seen by partitioning linear segments $[x,y]$ in $\ell_1^n$ into pieces contained entirely within some hypercube $Q$. Finally, the operator $\Lambda_n:\Lip(\ZZ_{\ell_1^n})\to\Lip(\ell_1^n)$ is clearly linear and pointwise continuous. Thus $\ZZ_{\ell_1^n}$ is weak$^*$ $1$-Lipschitz extendable in $\ell_1^n$. This is already mentioned explicitly in \cite[Corollary 3.8]{CCD}.

Now consider $f\in\Lip_0(\ZZ_{\ell_1})$ and the sequence of functions $f_n:=\Lambda_n(f\restrict_{\ZZ_{\ell_1^n}})\in\Lip_0(\ZZ_{\ell_1^n})$. It is clear that $f_n\restrict_{\ell_1^m}=f_m$ for $n>m$, again by coordinatewise affinity. Thus we may unambiguously define an extension $Ef:\bigcup_n\ell_1^n\to\RR$ of $f$ by $Ef(x)=f_n(x)$ where $x\in\ell_1^n$, and $\lipnorm{Ef}=\lipnorm{f}$. Since $\bigcup_n\ell_1^n$ is dense in $\ell_1$, this function extends uniquely to an extension $Ef\in\Lip_0(\ell_1)$ of $f$ that also satisfies $\lipnorm{Ef}=\lipnorm{f}$ (see \cite[Proposition 1.6]{Weaver2}). A standard argument shows that the map $f\mapsto Ef$ is pointwise continuous. This establishes part (c).

For parts (d) and (e), we use a similar argument as for part (c), building the extension cubewise for the intersection with $\ell_1^n$ for any $n$, and then extending from the intersection with $\bigcup_n\ell_1^n$, which is dense, to the whole set. 
\end{proof}

With this knowledge, the results from Section \ref{sec:homogeneous} allow us to improve on \cite[Proposition 3.7]{CCD}, which states that the countable $\ell_\infty$-sums of the following spaces are isomorphic.

\begin{theorem}\label{th:grid ell1}
$\Lip_0(\ZZ_{\ell_1})$ is isomorphic to $\Lip_0(\ell_1)$.
\end{theorem}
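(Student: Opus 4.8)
The plan is to apply Pe\l czy\'nski's decomposition method to the pair $X=\Lip_0(\ell_1)$ and $Y=\Lip_0(\ZZ_{\ell_1})$, for which I need three ingredients: (A) $\Lip_0(\ZZ_{\ell_1})\compl\Lip_0(\ell_1)$, (B) $\Lip_0(\ell_1)\compl\Lip_0(\ZZ_{\ell_1})$, and (C) $\Lip_0(\ell_1)\sim\pare{\bigoplus_\NN\Lip_0(\ell_1)}_\infty$. Ingredients (A) and (C) are essentially free. For (A), $\ZZ_{\ell_1}$ is weak$^*$ $1$-Lipschitz extendable in $\ell_1$ by Lemma \ref{lm:lemma_extension_Z_ell1}(c), so Fact \ref{clipfact}(a) produces the complemented copy. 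For (C), I would dualize Kaufmann's isomorphism $\lipfree{\ell_1}\sim\pare{\bigoplus_\NN\lipfree{\ell_1}}_1$ from Lemma \ref{lm:kaufmann}, using $\pare{(\bigoplus_\NN\lipfree{\ell_1})_1}^*\equiv\pare{\bigoplus_\NN\Lip_0(\ell_1)}_\infty$. Once (B) is available, Pe\l czy\'nski's method with $p=\infty$ yields $X\sim Y$, which is the theorem (and in particular improves \cite[Proposition 3.7]{CCD}).

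The whole difficulty is concentrated in (B), and the plan is to mimic the proof of Theorem \ref{th:lip net complemented}, replacing the sets ``ball $\cap$ net'' (unavailable here, since $\ZZ_{\ell_1}$ is not a net in $\ell_1$) by finite-dimensional integer boxes. For $n\in\NN$ let $B_n=\set{x\in\ZZ_{\ell_1^n}:\abs{x_i}\leq n2^n\text{ for all }i}$ be the box of half-width $a_n=n2^n$ in the first $n$ coordinates. The crucial point is that each $B_n$ is weak$^*$ $1$-Lipschitz extendable in $\ZZ_{\ell_1}$ with a constant \emph{independent of $n$}: the coordinatewise clamping map $x\mapsto\pare{\max\set{-a_n,\min\set{a_n,x_i}}}_i$ is a $1$-Lipschitz retraction of $\ZZ_{\ell_1^n}$ onto $B_n$, while $\ZZ_{\ell_1^n}=\ZZ_{\ell_1(\set{1,\ldots,n})}$ is itself weak$^*$ $1$-Lipschitz extendable in $\ZZ_{\ell_1}$ by Lemma \ref{lm:lemma_extension_Z_ell1}(d); composing the two extension operators gives the claim. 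This uniform-in-dimension bound, which avoids any Bourgain-type obstruction precisely because we extend boxes rather than Euclidean balls, is what makes the argument possible.

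With the boxes in hand, I would proceed exactly as in Theorem \ref{th:lip net complemented}. First, translate them to well-separated positions: choose $x_n\in\ZZ_{\ell_1}$ supported on a fresh coordinate far from the origin and set $T_n=x_n+B_n$, arranging (as in Proposition \ref{pr:homogeneous unbounded sequence} and Theorem \ref{th:lip net complemented}) that $(T_n)$ satisfies conditions (i)--(iii) of Theorem \ref{th:lip extendable infinite union} with uniform constants. That theorem then shows $\bigcup_n T_n$ is weak$^*$ $K$-Lipschitz extendable in $\ZZ_{\ell_1}$, so $\Lip_0\pare{\bigcup_n T_n}\compl\Lip_0(\ZZ_{\ell_1})$ by Fact \ref{clipfact}(a); combining with the $\ell_1$-sum decomposition of Lemma \ref{lm:metric l1 sum} and dualizing gives $\pare{\bigoplus_n\Lip_0(T_n)}_\infty\compl\Lip_0(\ZZ_{\ell_1})$. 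Second, rescale: the dilated boxes $2^{-n}B_n$ consist of points of $\ell_1^n$ with coordinates in $2^{-n}\ZZ\cap[-n,n]$, and a truncate-and-round argument shows that for every $x\in\ell_1$ there are points of $2^{-n}B_n$ converging to $x$ (the tail vanishes because $x\in\ell_1$, and the rounding error is at most $n\,2^{-n}\to0$). Hence Lemma \ref{lm:lip density} gives $\Lip_0(\ell_1)\compl\pare{\bigoplus_n\Lip_0(2^{-n}B_n)}_\infty$. Since Lipschitz spaces are invariant under translations and dilations, $\Lip_0(T_n)\equiv\Lip_0(B_n)\equiv\Lip_0(2^{-n}B_n)$, so the two displayed complementations share the same middle space and chain together to yield (B).

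The main obstacle is step (B), and within it the two competing demands on the boxes: they must be \emph{uniformly} extendable (forcing the clamp-retraction viewpoint together with Lemma \ref{lm:lemma_extension_Z_ell1}(d), in place of any ball retraction), while their rescalings must be dense in \emph{all} of $\ell_1$ rather than in a fixed ball (forcing the half-width $a_n=n2^n$ to grow faster than the dilation factor $2^n$, so that $2^{-n}a_n=n\to\infty$, and the dimension $n$ to grow as well). Balancing these against the separation and diameter hypotheses (ii)--(iii) of Theorem \ref{th:lip extendable infinite union} is the only delicate bookkeeping; everything else is a direct transcription of the arguments already carried out for nets.
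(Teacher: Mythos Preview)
Your proposal is correct and follows essentially the same route as the paper: Pe\l czy\'nski's method with ingredients (A) and (C) obtained exactly as you describe, and (B) obtained by taking finite-dimensional integer boxes that are uniformly weak$^*$ extendable in $\ZZ_{\ell_1}$, translating them to well-separated positions via the homogeneity of $\ZZ_{\ell_1}$, and then rescaling so that Lemma \ref{lm:lip density} applies. The only cosmetic differences are that the paper uses half-width $4^n$ rather than your $n2^n$ (either works) and packages your translation step as a direct citation of Proposition \ref{pr:homogeneous unbounded sequence} rather than re-running the argument of Theorem \ref{th:lip extendable infinite union} by hand.
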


\begin{proof}
Let $S_n=Q_{4^n}\cap \ZZ_{\ell_1^n}$. This is a bounded set that is weak$^*$ extendable in $\ZZ_{\ell_1}$ with constant $1$: indeed, $S_n$ is extendable in either $Q_{4^n}\cap\ZZ_{\ell_1}$ or $\ZZ_{\ell_1^n}$ by the existence of retractions, and those are extendable in $\ell_1$, hence in its subset $\ZZ_{\ell_1}$, by Lemma \ref{lm:lemma_extension_Z_ell1}. Since $\ZZ_{\ell_1}$ is homogeneous, Proposition \ref{pr:homogeneous unbounded sequence} implies that $(\bigoplus_{n=1}^\infty\Lip_0(S_n))_\infty\compl\Lip_0(\ZZ_{\ell_1})$, and we also have $\Lip_0(\ZZ_{\ell_1})\compl\Lip_0(\ell_1)$ by Lemma \ref{lm:lemma_extension_Z_ell1}. Now notice that $2^{-n}S_n$ is the set of all points $x\in\ell_1^n$ such that each coordinate is a multiple of $2^{-n}$ and bounded by $\pm 2^n$, and that the sets $(2^{-n}S_n)$ form a nested sequence whose union is dense in $\ell_1$. Thus
$$
\Lip_0(\ell_1) \compl \pare{\bigoplus_{n=1}^\infty\Lip_0(2^{-n}S_n)}_\infty \equiv \pare{\bigoplus_{n=1}^\infty\Lip_0(S_n)}_\infty
$$
by Lemma \ref{lm:lip density}. Since $\Lip_0(\ell_1)$ is isomorphic to its countable $\ell_\infty$-sum by Lemma \ref{lm:kaufmann}, we conclude $\Lip_0(\ell_1)\sim\Lip_0(\ZZ_{\ell_1})$ by Pe\l czy\'nski's method.
\end{proof}

Theorem \ref{th:grid ell1} also answers the question raised in \cite[p. 2717]{CCD} on whether $\Lip_0(\ZZ_{\ell_1})$ is isomorphic to its countable $\ell_\infty$-sum. But we can go further and prove the predual statement: $\lipfree{\ZZ_{\ell_1}}$ is isomorphic to its countable $\ell_1$-sum, yielding another case where Question \ref{q:l1 sum} has a positive answer. For the proof, we need to describe a new extension operator in $\ell_1$.

\begin{lemma}
\label{lm:lemma_extension_partition_ell1}
Let $\mathcal{P}$ be a collection of pairwise disjoint subsets of $\NN$. Then $\bigcup_{I\in\mathcal{P}}\ell_1(I)$ is weak$^*$ $2$-Lipschitz extendable in $\ell_1$.
\end{lemma}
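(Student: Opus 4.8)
The plan is to exploit the special geometry of $\ell_1$ rather than the separation hypotheses of Theorem~\ref{th:lip extendable infinite union}: the subspaces $\ell_1(I)$ all pass through the base point $0$, so they are as far as possible from being well-separated and that theorem cannot be applied directly. Instead, I would use that each $\ell_1(I)$ is a $1$-Lipschitz retract of $\ell_1$ via the coordinate projection $P_I\colon\ell_1\to\ell_1(I)$, $P_Ix=\sum_{n\in I}x_ne_n$ (Lemma~\ref{lm:lemma_extension_Z_ell1}(a)), together with the crucial identity $\sum_{I\in\mathcal P}\norm{P_Ix}_1=\norm{x}_1$, which holds precisely because $\mathcal P$ partitions the coordinates. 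Writing $S=\bigcup_{I\in\mathcal P}\ell_1(I)$, the idea is to glue the retraction-based extensions \emph{additively} rather than with bump functions, setting
$$
Ef=\sum_{I\in\mathcal P} f\circ P_I
$$
for $f\in\Lip_0(S)$.

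The verification then proceeds in a few short steps. First, convergence and boundedness: since $f(0)=0$ we have $\abs{f(P_Ix)}\le\lipnorm{f}\norm{P_Ix}_1$, hence $\sum_{I\in\mathcal P}\abs{f(P_Ix)}\le\lipnorm{f}\norm{x}_1<\infty$, so $Ef(x)$ is well defined and $Ef(0)=0$. Second, the extension property: if $x\in\ell_1(I_0)$ then, by disjointness of the parts, $P_Ix=0$ for every $I\neq I_0$, whence $Ef(x)=f(P_{I_0}x)=f(x)$. Third, the Lipschitz estimate: for $x,y\in\ell_1$ I would bound term by term, using $\abs{f(P_Ix)-f(P_Iy)}\le\lipnorm{f}\norm{P_I(x-y)}_1$, and then sum over the partition via the norm-additivity identity to get
$$
\abs{Ef(x)-Ef(y)}\le\lipnorm{f}\sum_{I\in\mathcal P}\norm{P_I(x-y)}_1=\lipnorm{f}\norm{x-y}_1 ,
$$
so that $\lipnorm{Ef}\le\lipnorm{f}$. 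Linearity of $f\mapsto Ef$ is immediate.

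The part demanding the most care is weak$^*$ continuity, and the cleanest route is to produce a preadjoint directly. Define $T\colon\lipfree{\ell_1}\to\lipfree{S}$ on evaluation functionals by $T\delta(x)=\sum_{I\in\mathcal P}\delta(P_Ix)$; this series converges absolutely in $\lipfree{S}$ because $\sum_{I\in\mathcal P}\norm{\delta(P_Ix)}=\sum_{I\in\mathcal P}\norm{P_Ix}_1=\norm{x}_1$. A one-line computation gives $\duality{T\mu,f}=\duality{\mu,Ef}$ for all $f\in\Lip_0(S)$, so $T$ is bounded with $\norm{T}\le1$ and $E=T^*$ is weak$^*$-weak$^*$ continuous. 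This shows that $S$ is in fact weak$^*$ \emph{$1$}-Lipschitz extendable in $\ell_1$, which even improves the stated constant $2$ to $1$. I do not expect a genuine obstacle here: the only real worry in the infinite-partition case is a blow-up of the Lipschitz constant or a failure of pointwise continuity, and both are controlled by the single identity $\sum_{I\in\mathcal P}\norm{P_Ix}_1=\norm{x}_1$. This is exactly the feature of $\ell_1$ that makes the additive gluing succeed and that has no analogue in a general Banach space, which is why the argument is tailored to $\ell_1$ rather than following from Theorem~\ref{th:lip extendable infinite union}.
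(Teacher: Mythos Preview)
Your proof is correct and takes a genuinely different, more elementary route than the paper's. The paper constructs, for each $I\in\mathcal P$, a $2$-Lipschitz retraction $R_I:\ell_1\to\ell_1(I)$ of the form $R_Ix=r_I(x)P_Ix$ that vanishes outside a cone $C_I=\{x:\norm{x-P_Ix}<\norm{P_Ix}\}$; after checking that the cones are pairwise disjoint, it glues piecewise by setting $Ef=f\circ R_I$ on $C_I$ and $Ef=0$ elsewhere. This is the bump-function philosophy of Section~\ref{extensions} adapted to cones through the origin, and the factor $2$ comes from the Lipschitz constant of $R_I$. Your argument replaces this geometric partition-of-unity by the single additive formula $Ef=\sum_{I\in\mathcal P}f\circ P_I$, and the identity $\sum_{I\in\mathcal P}\norm{P_Iz}_1=\norm{z}_1$ does all the work at once: it gives absolute convergence, the Lipschitz bound, and the preadjoint. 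You even obtain constant $1$ rather than $2$, so the statement can be strengthened. What the paper's approach buys is an explicit family of nonlinear retractions $R_I$ onto $\ell_1(I)$ that kill everything outside a cone, a gadget that could be reused elsewhere; what your approach buys is brevity and the optimal constant.

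One small point of presentation in your weak$^*$ argument: defining $T$ ``on evaluation functionals'' and then pairing with a general $\mu$ is slightly circular. The clean fix is to first observe that the map $x\mapsto\sum_{I\in\mathcal P}\delta(P_Ix)$ is a $1$-Lipschitz, base-point-preserving map $\ell_1\to\lipfree{S}$ (same estimate: $\norm{\sum_I(\delta(P_Ix)-\delta(P_Iy))}\le\sum_I\norm{P_I(x-y)}_1=\norm{x-y}_1$), invoke the linearization property of $\lipfree{\ell_1}$ to obtain $T$, and then verify $T^*=E$ on Diracs. Alternatively, pointwise continuity of $E$ on bounded sets follows directly by dominated convergence for the series $\sum_I f_\alpha(P_Ix)$, using the summable majorant $C\norm{P_Ix}_1$.
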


\begin{proof}
First fix some subset $I\subset\NN$ and let $P_I:\ell_1\to\ell_1(I)$ be the standard projection. We will define a $2$-Lipschitz retraction $R_I:\ell_1\to\ell_1(I)$ with the property that $R_Ix=0$ for all $x\in\ell_1$ that do not belong to the cone
$$
C_I := \set{x\in\ell_1 \,:\, \norm{x-P_Ix}<\norm{P_Ix}} .
$$
To that end, we first define the function
$$
r_I(x) := \max\set{1-\frac{\norm{x-P_Ix}}{\norm{P_Ix}},0}
$$
for $x\in\ell_1$, with the understanding that $r_I(x)=0$ if $P_Ix=0$. Note that $r_I$ is continuous and that $r_I(x)>0$ precisely when $x\in C_I$. Now set
$$
R_Ix := r_I(x)P_Ix .
$$
Then $R_I$ is a continuous retraction onto $\ell_1(I)$, as $r_I(x)=1$ if $x=P_Ix\neq 0$. We claim that $R_I$ is $2$-Lipschitz. To see this, fix $x,y\in\ell_1$. Assume first that $x,y\in C_I$, i.e. $r_I(x),r_I(y)>0$. Then
\begin{align*}
\norm{R_Ix-R_Iy} &\leq r_I(x)\norm{P_Ix-P_Iy} + \norm{P_Iy}\abs{r_I(x)-r_I(y)} \\
&\leq \norm{x-y} + \norm{P_Iy}\abs{\frac{\norm{x-P_Ix}}{\norm{P_Ix}} - \frac{\norm{y-P_Iy}}{\norm{P_Iy}}} \\
&= \norm{x-y} + \abs{\norm{x-P_Ix}-\norm{y-P_Iy}-\frac{\norm{P_Ix}-\norm{P_Iy}}{\norm{P_Ix}}\norm{x-P_Ix}} \\
&\leq \norm{x-y} + \big|\norm{x-P_Ix}-\norm{y-P_Iy}\big| + \frac{\norm{x-P_Ix}}{\norm{P_Ix}} \big|\norm{P_Ix}-\norm{P_Iy}\big| \\
&\leq \norm{x-y} + \norm{(x-y)-P_I(x-y)} + \norm{P_I(x-y)} \\
&= 2\norm{x-y} .
\end{align*}
By continuity, the same estimate holds for $x,y\in\cl{C_I}$.
Now assume that $r_I(x)>0,r_I(y)\notin\cl{C_I}$. Let $z$ be the closest point to $y$ in the linear segment $[x,y]$ such that $z\in\cl{C_I}$. Then $R_Iz=0$ by continuity, and
$$
\norm{R_Ix-R_Iy} = \norm{R_Ix-R_Iz} \leq 2\norm{x-z} \leq 2\norm{x-y} .
$$
This establishes our claim.

Next, observe that the cones $C_I$, $C_J$ are disjoint if $I,J$ are disjoint subsets of $\NN$. Indeed, if $x\in C_I$ then $\norm{P_Ix}>\norm{x-P_Ix}=\norm{P_{\NN\setminus I}x}\geq\norm{P_Jx}$, and a symmetric argument shows $\norm{P_Jx}>\norm{P_Ix}$ for $x\in C_J$. Both conditions are clearly incompatible.

Now let $f\in\Lip_0(\bigcup_{I\in\mathcal{P}}\ell_1(I))$ and define a function $Ef:\ell_1\to\RR$ by
$$
Ef(x) = \begin{cases}
f\circ R_I(x) &\text{, if $x\in C_I$}\\
0 &\text{, if $x\notin\bigcup_{I\in\mathcal{P}} C_I$}
\end{cases} .
$$
This function is well-defined because the cones $C_I$ are pairwise disjoint. It is now easy to verify that $\lipnorm{Ef}\leq 2\lipnorm{f}$. To do so, let $x,y\in\ell_1$. If $x,y\in\ell_1(I)$ for some $I\in\mathcal{P}$, then $Ef$ agrees with $f\circ R_I$ on $x$ and $y$, thus $\abs{Ef(x)-Ef(y)}\leq 2\lipnorm{f}\norm{x-y}$. Otherwise we may assume that $Ef(x)\neq 0$, which implies $x\in C_I$ for some $I\in\mathcal{P}$, and thus $y\notin C_I$. Let $u$ be the closest point to $y$ in the linear segment $[x,y]$ such that $u\in\cl{C_I}$, then $R_Iu=0$ and $Ef(u)=0$ by continuity. If $Ef(y)=0$ then $\abs{Ef(x)-Ef(y)}=\abs{Ef(x)-Ef(u)}\leq 2\norm{x-u}\leq 2\norm{x-y}$. Otherwise $y\in C_J$ for some $J\in\mathcal{P}$, $J\cap I=\varnothing$. Let $v$ be the closest point to $x$ in $[x,y]$ such that $v\in\cl{C_J}$; again, $R_Jv=0$ and $Ef(v)=0$. Since $C_I$, $C_J$ are open and disjoint, $u\in [x,v]$. Thus
\begin{align*}
\abs{Ef(x)-Ef(y)} &\leq \abs{Ef(x)}+\abs{Ef(y)} = \abs{Ef(x)-Ef(u)} + \abs{Ef(y)-Ef(v)} \\
&= \abs{f(R_I(x))-f(R_I(u))} + \abs{f(R_J(v))-f(R_J(y))} \\
&\leq 2\lipnorm{f}\norm{x-u}+2\lipnorm{f}\norm{y-v} \leq 2\lipnorm{f}\norm{x-y} .
\end{align*}
This finishes the proof that $\lipnorm{Ef}\leq 2\lipnorm{f}$.
So $E:\Lip_0(\bigcup_{I\in\mathcal{P}}\ell_1(I))\to\Lip_0(\ell_1)$ is a pointwise continuous extension operator with $\norm{E}\leq 2$.
\end{proof}

\begin{theorem}\label{th:gridsum}
$\lipfree{\ZZ_{\ell_1}}$ is isomorphic to its countable $\ell_1$-sum.
\end{theorem}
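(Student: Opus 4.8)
The plan is to apply Pe\l czy\'nski's decomposition method to $\lipfree{\ZZ_{\ell_1}}$ and to the space $Z=\pare{\bigoplus_\NN\lipfree{\ZZ_{\ell_1}}}_1$. Since $Z$ is plainly isometric to its own countable $\ell_1$-sum (a countable sum of countable sums reindexes to a countable sum), it suffices to establish the two complementation relations $\lipfree{\ZZ_{\ell_1}}\compl Z$ and $Z\compl\lipfree{\ZZ_{\ell_1}}$; the conclusion $\lipfree{\ZZ_{\ell_1}}\sim Z$ then follows. The first relation is trivial, as $\lipfree{\ZZ_{\ell_1}}$ is one of the summands of $Z$, so the entire task reduces to producing a complemented copy of $Z$ inside $\lipfree{\ZZ_{\ell_1}}$.

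To build this copy I would first fix a partition $\NN=\bigsqcup_{k\in\NN}I_k$ into countably many infinite blocks. Each coordinate subspace $\ell_1(I_k)$ is linearly isometric to $\ell_1$, hence each grid $\ZZ_{\ell_1(I_k)}=\ZZ_{\ell_1}\cap\ell_1(I_k)$ is isometric to $\ZZ_{\ell_1}$. Because the blocks are disjoint, points lying in different grids $\ZZ_{\ell_1(I_k)}$, $\ZZ_{\ell_1(I_m)}$ have disjoint supports, so $\norm{x}+\norm{y}=\norm{x-y}$; that is, the sets $\ZZ_{\ell_1(I_k)}\setminus\set{0}$ are well-separated with respect to the base point $0$ with constant $\lambda=1$ in the sense of Lemma \ref{lm:metric l1 sum}. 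The heart of the proof is the claim that the union $T=\bigcup_{k}\ZZ_{\ell_1(I_k)}$ is weak$^*$ Lipschitz extendable in $\ZZ_{\ell_1}$. Note that Theorem \ref{th:lip extendable infinite union} cannot be invoked directly, since the copies $\ZZ_{\ell_1(I_k)}$ are unbounded and hence violate its diameter condition (ii); this is precisely the situation for which the tailored Lemma \ref{lm:lemma_extension_partition_ell1} is designed.

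To prove the claim I would realize the extension operator as a composition of three pointwise-continuous maps applied to $f\in\Lip_0(T)$. Step (A): extend $f$ from $T$ to $\bigcup_k\ell_1(I_k)$ by gluing, on each block $\ell_1(I_k)\equiv\ell_1$, the weak$^*$ $1$-Lipschitz extension of $f\restrict_{\ZZ_{\ell_1(I_k)}}$ provided by Lemma \ref{lm:lemma_extension_Z_ell1}(c). The glued function $g$ is well defined and $1$-Lipschitz: the blocks meet only at $0$, where all extensions take the value $f(0)=0$, and across two blocks one estimates $\abs{g(x)-g(y)}\leq\abs{g(x)}+\abs{g(y)}\leq\lipnorm{f}\pare{\norm{x}+\norm{y}}=\lipnorm{f}\norm{x-y}$. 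Step (B): extend $g$ from $\bigcup_k\ell_1(I_k)$ to all of $\ell_1$ using Lemma \ref{lm:lemma_extension_partition_ell1}, at the cost of a factor $2$. Step (C): restrict the result from $\ell_1$ back to $\ZZ_{\ell_1}$. Each step is linear, pointwise continuous and norm-bounded, and the composition restores $f$ on $T$, witnessing that $T$ is weak$^*$ $K$-Lipschitz extendable in $\ZZ_{\ell_1}$ with $K\leq 2$. The delicate points are verifying that the block-wise gluing in (A) yields a \emph{pointwise-continuous} operator and that composing the three maps indeed returns $f$ on $T\subset\bigcup_k\ell_1(I_k)\cap\ZZ_{\ell_1}$.

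With the claim in hand, Fact \ref{clipfact}(b) gives that $\lipfree{T}$ is complemented in $\lipfree{\ZZ_{\ell_1}}$. Since $\delta(0)=0$, we have $\lipfree{T}=\lipfree{\bigcup_k\pare{\ZZ_{\ell_1(I_k)}\setminus\set{0}}}$ and $\lipfree{\ZZ_{\ell_1(I_k)}\setminus\set{0}}=\lipfree{\ZZ_{\ell_1(I_k)}}\sim\lipfree{\ZZ_{\ell_1}}$, so the well-separation established above together with Lemma \ref{lm:metric l1 sum} yields
$$
\lipfree{T}\sim\pare{\bigoplus_\NN\lipfree{\ZZ_{\ell_1}}}_1\oplus\ell_1=Z\oplus\ell_1 .
$$
Hence $Z\compl Z\oplus\ell_1\sim\lipfree{T}\compl\lipfree{\ZZ_{\ell_1}}$, giving the remaining complementation relation by transitivity of $\compl$. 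Combined with the trivial relation $\lipfree{\ZZ_{\ell_1}}\compl Z$ and the fact that $Z$ is isomorphic to its countable $\ell_1$-sum, Pe\l czy\'nski's decomposition method delivers $\lipfree{\ZZ_{\ell_1}}\sim Z$, as desired. The main obstacle is the claim, i.e. organizing the three-step extension of the unbounded grid-copies through $\ell_1$; once the tailored partition lemma is available, the remaining steps are routine bookkeeping with well-separated $\ell_1$-sums and Pe\l czy\'nski's method.
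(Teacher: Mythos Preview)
Your proposal is correct and follows essentially the same route as the paper: partition $\NN$ into infinite blocks, extend blockwise from $\bigcup_k\ZZ_{\ell_1(I_k)}$ to $\bigcup_k\ell_1(I_k)$ via Lemma \ref{lm:lemma_extension_Z_ell1}, then to $\ell_1$ via Lemma \ref{lm:lemma_extension_partition_ell1}, restrict to $\ZZ_{\ell_1}$, and finish with Pe\l czy\'nski. The only cosmetic difference is that the paper invokes the isometric metric $\ell_1$-sum identification $\lipfree{T}\equiv\pare{\bigoplus_k\lipfree{\ZZ_{\ell_1(I_k)}}}_1$ directly (via \cite[Proposition 3.9]{Weaver2}) rather than Lemma \ref{lm:metric l1 sum}, thereby avoiding your extra $\oplus\,\ell_1$ bookkeeping.
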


\begin{proof}
Let $(I_n)_{n\in\NN}$ be a partition of $\NN$ into countably many infinite subsets. It is clear that each $\ell_1(I_n)$ is isometric to $\ell_1$. Moreover, the spaces $\ell_1(I_n)$ are in $\ell_1$-sum with each other. In particular, for any choice of subsets $S_n\subset\ell_1(I_n)$ containing $0$, the sets $(S_n)$ are in \emph{metric $\ell_1$-sum}, meaning that $d(x,y)=d(x,0)+d(y,0)$ for $x,y$ belonging to different sets $S_n$. Thus, $\Lip_0(\bigcup_nS_n)$ can be identified isometrically with $\pare{\bigoplus_n\Lip_0(S_n)}_\infty$ via the natural gluing operation, and $\lipfree{\bigcup_nS_n}\equiv\pare{\bigoplus_n\lipfree{S_n}}_1$ (see e.g. \cite[Propositions 2.8 and 3.9]{Weaver2}). Applying this to $S_n=\ZZ_{\ell_1(I_n)}$, we get
$$
\mathcal{F}\pare{\bigcup_{n=1}^\infty\ZZ_{\ell_1(I_n)}} \equiv \pare{\bigoplus_{n=1}^\infty \lipfree{\ZZ_{\ell_1(I_n)}}}_1 \equiv \pare{\bigoplus_{\NN} \lipfree{\ZZ_{\ell_1}}}_1 .
$$
Now, Lemma \ref{lm:lemma_extension_Z_ell1} shows that each $\ZZ_{\ell_1(I_n)}$ is weak$^*$ $1$-Lipschitz extendable to $\ell_1(I_n)$. Thus, by the Lipschitz space identification mentioned above, $\bigcup_n\ZZ_{\ell_1(I_n)}$ is weak$^*$ $1$-Lipschitz extendable to $\bigcup_n\ell_1(I_n)$ simply by extending from each $\ZZ_{\ell_1(I_n)}$ to $\ell_1(I_n)$ individually. Additionally, Lemma \ref{lm:lemma_extension_partition_ell1} shows that $\bigcup_n\ell_1(I_n)$ is weak$^*$ Lipschitz extendable to $\ell_1$, hence also to its subset $\ZZ_{\ell_1}$. Therefore
$$
\pare{\bigoplus_{\NN} \lipfree{\ZZ_{\ell_1}}}_1 \equiv \mathcal{F}\pare{\bigcup_{n=1}^\infty\ZZ_{\ell_1(I_n)}} \compl \mathcal{F}\pare{\bigcup_{n=1}^\infty\ell_1(I_n)} \compl \lipfree{\ZZ_{\ell_1}} .
$$
Pe\l czy\'nski's method now completes the argument.
\end{proof}

\subsection{Compact Lipschitz retracts in Banach spaces}
\label{sec:compacts}

The question whether every separable Lipschitz-free space $\lipfree{M}$ is isomorphic to the Lipschitz-free space $\lipfree{K}$ over some compact metric space $K$ was recently answered in the negative by Basset \cite{Basset}. All counterexamples $M$ provided in \cite{Basset} are complete countable spaces. Thus, the restricted question whether the same can hold for all Lipschitz-free spaces $\lipfree{X}$ over separable Banach spaces $X$ is still open. One can even ask whether it would be possible to moreover choose $K\subset X$. This question was raised by Garc\'ia-Lirola and Proch\'azka in \cite{GP}, where they gave an affirmative answer when $X$ is Pe\l czy\'nski's universal basis space.

The next theorem is a step towards a possible positive solution to that problem. Recall that a \emph{finite-dimensional (Schauder) decomposition}, FDD for short, of $X$ is a sequence $(X_n)$ of non-trivial, finite-dimensional subspaces of $X$ such that every $x\in X$ has a unique representation $x=\sum_nx_n$ with $x_n\in X_n$. Schauder bases correspond to the particular case where $\dim(X_n)=1$ for all $n$.

\begin{theorem}\label{th:compactfinal}
Let $X$ be a Banach space admitting an FDD. Then there is a compact, convex, linearly dense subset $K\subset X$ that is a Lipschitz retract of $X$ and such that $\Lip_0(X)$ is isomorphic to $\Lip_0(K)$.
\end{theorem}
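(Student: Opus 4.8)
The plan is to prove $\Lip_0(X)\sim\Lip_0(K)$ by Pe\l czy\'nski's decomposition method, establishing the two complementations $\Lip_0(K)\compl\Lip_0(X)$ and $\Lip_0(X)\compl\Lip_0(K)$ and invoking $\Lip_0(X)\sim\pare{\bigoplus_\NN\Lip_0(X)}_\infty$, which dualizes Lemma \ref{lm:kaufmann}. Let $(X_n)$ be the FDD with block projections $Q_n$ and partial-sum projections $P_{E_n}$ onto $E_n=X_1\oplus\cdots\oplus X_n$, and put $C=\sup_n\norm{P_{E_n}}<\infty$. As candidate set I would take a generalized Hilbert cube adapted to the FDD, $K=\set{\textstyle\sum_n x_n : x_n\in X_n,\ \norm{x_n}\le\lambda_n}$, for a sequence with $\lambda_{n+1}\le\lambda_n/16$ and $\sum_n\lambda_n<\infty$. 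Then $K$ is convex (each block constraint is), compact (it is the image of the compact product $\prod_n\lambda_nB_{X_n}$ under the uniformly convergent summation map), and linearly dense (it contains $\lambda_nB_{X_n}$ for each $n$, so its span contains $\bigcup_nX_n$). The base point is $0\in K$.

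For $\Lip_0(K)\compl\Lip_0(X)$ it suffices, by Fact \ref{clipfact}(a), to exhibit a Lipschitz retraction $r:X\to K$, as then $f\mapsto f\circ r$ is an extension operator. \emph{This is the main obstacle.} The naive block-radial retraction $r(x)=\sum_n\rho_n(Q_nx)$, with $\rho_n$ the $2$-Lipschitz radial retraction onto $\lambda_nB_{X_n}$, lands in $K$ and fixes $K$, but its only obvious bound is the $\ell_1$-type estimate $\norm{r(x)-r(y)}\le\sum_n2\norm{Q_n(x-y)}$, which is useless for a conditional FDD (equivalently, coordinatewise truncation onto the box need not be Lipschitz). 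When the FDD is unconditional one can replace this by the genuine norm of the block-diagonal displacement and $r$ is Lipschitz, so the difficulty is concentrated in the conditional case. I would attack it with the paper's own device: partition $X$ into scale regions on which only finitely many blocks overflow their box, retract inside each region by a finite, hence Lipschitz, truncation, and glue the local retractions with a Lipschitz partition of unity subordinate to the scales, exactly as in the proofs of Theorem \ref{th:lip extendable infinite union} and Lemma \ref{lm:lemma_extension_partition_ell1}; the rapid decay of $\lambda_n$ is what keeps the gluing summable. Should the box resist a Lipschitz retraction for some conditional FDD, the fallback is to redefine $K$ as the closed convex hull of the null sequence of copies built below, for which a cone-type retraction in the spirit of Lemma \ref{lm:lemma_extension_partition_ell1} should be available.

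The reverse complementation $\Lip_0(X)\compl\Lip_0(K)$ I expect to go through cleanly. Inside $K$ I place a well-separated null sequence of shrunken finite-dimensional balls: fix anchors $a_n\in X_n$ with $\norm{a_n}=\lambda_n/2$ and scales $s_n=\lambda_n/(8C)$, and set $T_n=a_n+s_nB_{E_n}$. Each $T_n$ is an isometric copy of $s_nB_{E_n}$, so $\Lip_0(T_n)\equiv\Lip_0(B_{E_n})$, and $T_n\subset K$ since its block-$n$ coordinate has norm $\le\lambda_n/2+2Cs_n\le\lambda_n$ while lower coordinates have norm $\le2Cs_n\le\lambda_j$ and higher ones vanish. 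A direct triangle-inequality computation using $\lambda_{n+1}\le\lambda_n/16$ gives $\diam(T_n)\le d(T_n,0)$ and $d(x,0)+d(y,0)\le 4\,d(x,y)$ for $x\in T_n,y\in T_m$ with $n\neq m$, so $\set{T_n}$ satisfies conditions (ii)--(iii) of Theorem \ref{th:lip extendable infinite union} uniformly; moreover each $T_n$ is a $2C$-Lipschitz retract of $K$ via $x\mapsto a_n+s_n\,\mathrm{rad}\big((P_{E_n}x-a_n)/s_n\big)$ (FDD projection followed by radial retraction), hence uniformly weak$^*$ $2C$-Lipschitz extendable in $K$, which is (i). Theorem \ref{th:lip extendable infinite union} then makes $\bigcup_nT_n$ weak$^*$ Lipschitz extendable in $K$, and together with Lemma \ref{lm:metric l1 sum} and duality this yields $\pare{\bigoplus_n\Lip_0(B_{E_n})}_\infty\compl\Lip_0(K)$.

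Finally I reconstruct $\Lip_0(X)$ from these finite-dimensional pieces. Taking $S_n=B_{E_n}\subset B_X$, for any $x\in B_X$ the normalized projections $P_{E_n}x/\max(1,\norm{P_{E_n}x})$ lie in $B_{E_n}$ and converge to $x$, so Lemma \ref{lm:lip density} gives $\Lip_0(B_X)\compl\pare{\bigoplus_n\Lip_0(B_{E_n})}_\infty$. Since $\Lip_0(B_X)\sim\Lip_0(X)$ by Lemma \ref{lm:kaufmann}, chaining this with the previous paragraph produces $\Lip_0(X)\compl\Lip_0(K)$; combined with $\Lip_0(K)\compl\Lip_0(X)$ and the self-$\ell_\infty$-sum property of $\Lip_0(X)$, Pe\l czy\'nski's method delivers $\Lip_0(X)\sim\Lip_0(K)$. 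The entire argument is routine once the Lipschitz retraction onto $K$ is secured; that retraction, especially for conditional FDDs, is the crux.
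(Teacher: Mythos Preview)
Your overall strategy---Pe\l czy\'nski's method, embedding a well-separated sequence of shrunken $E_n$-balls inside $K$ via Theorem \ref{th:lip extendable infinite union}, and rebuilding $\Lip_0(B_X)$ from $(\Lip_0(B_{E_n}))$ through Lemma \ref{lm:lip density}---matches the paper's proof essentially verbatim. Your placement of the $T_n$ inside the generalized cube and the accompanying retraction and separation estimates are the same kind of computation the paper carries out (with slightly different anchors), and that part is fine.

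The genuine gap is exactly the one you flag: you have not produced a Lipschitz retraction $X\to K$, only a sketch of possible attacks. Your block-radial map is, as you note, hopeless for a conditional FDD; the ``scale-region plus partition-of-unity'' idea is not a proof, and it is not at all clear that the devices from Theorem \ref{th:lip extendable infinite union} or Lemma \ref{lm:lemma_extension_partition_ell1} adapt to yield a global retraction onto a convex body---those tools produce extension \emph{operators} on functions, not retractions of the underlying space, and gluing local retractions into a map that still lands in a prescribed convex set is a different (and harder) problem. The paper does not attempt this either: it simply imports the compact, convex, linearly dense Lipschitz retract $K$ from \cite{HM23} (H\'ajek--Medina), together with the extra property that $K\cap E_n$ has non-empty interior in each $E_n$, and then runs the remainder of the argument exactly as you do. So the ``crux'' you identify is not proved in the paper at all but outsourced to a nontrivial external construction; absent that citation, your write-up is incomplete at precisely this point.
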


\begin{proof}
Let $(X_n)$ be an FDD of $X$, and let $P_n$ be the partial sum projection onto $E_n=X_1\oplus\ldots\oplus X_n$. Then $(E_n)$ is an increasing sequence of finite-dimensional subspaces of $X$ whose union is dense in $X$. It is proved in \cite{HM23} that there exists a compact, convex, linearly dense set $K\subset X$ that is moreover a Lipschitz retract of $X$. The construction in \cite{HM23} has one extra property that will be required in our proof: for every $n$, the intersection $K\cap E_n$ has non-empty interior in $E_n$. We may also assume that $0\in K$ for simplicity.

We will construct a sequence $(S_n)$ satisfying the hypotheses of Theorem \ref{th:lip extendable infinite union} (with $x_0=0$ and $X$ as the ambient space) such that $S_n$ is an $E_n$-ball contained in $K\cap E_n$. Let us first see how this will be enough to prove the theorem. By Theorem \ref{th:lip extendable infinite union} we deduce that $\bigcup_n S_n$ is weak$^*$ Lipschitz extendable to $M$, hence also to its subset $K$. Then we have
$$
\pare{\bigoplus_{n=1}^\infty \lipfree{B_{E_n}}}_1 \equiv \pare{\bigoplus_{n=1}^\infty \lipfree{S_n}}_1 \compl \mathcal{F}\pare{\bigcup_{n=1}^\infty S_n} \compl \lipfree{K} \compl \lipfree{X} .
$$
Indeed, the first relation holds because $S_n$ is a translated, rescaled copy of $B_{E_n}$; the second is due to Lemma \ref{lm:metric l1 sum}; and the third and fourth ones hold because of weak$^*$ Lipschitz extendability. On the other hand, since $\bigcup_n B_{E_n}$ is dense in $B_X$, Lemma \ref{lm:lip density} (or, more directly, \cite[Theorem 3.1]{CCD}) yields $\Lip_0(B_X)\compl (\bigoplus_n\Lip_0(B_{E_n}))_\infty$. Thus $\Lip_0(B_X)\compl\Lip_0(K)\compl\Lip_0(X)$, and we conclude that $\Lip_0(K)\sim\Lip_0(X)$ by Lemma \ref{lm:kaufmann} and Pe\l czy\'nski's method.

Now we proceed with the construction. For every $n$, $K\cap E_n$ has non-empty interior and thus, by convexity, contains arbitrarily small closed $E_n$-balls arbitrarily close to $0$. Thus we may iteratively choose balls $S_n = x_n+2^{-(N_n+1)}B_{E_n}\subset K\cap E_n$, where $\norm{x_n}=2^{-N_n}$ and $(N_n)$ is an increasing sequence in $\NN$ such that $N_n\geq N_{n-1}+2$. Let us check that the sets $S_n$ obtained in this way satisfy the conditions of Theorem \ref{th:lip extendable infinite union}:

(i) By the general properties of FDDs, $C:=\sup_n \norm{P_n}<\infty$. Thus each $E_n$ is a $C$-Lipschitz retract of $X$, and $S_n$ is a $2$-Lipschitz retract of $E_n$, proving that condition (i) is satisfied with constant $2C$.

(ii) We have $\diam(S_n)=2^{-N_n}$ and $d(S_n,0)=2^{-(N_n+1)}$, thus (ii) is satisfied with $D=\frac{1}{2}$.

(iii) Suppose that $x\in S_n$ and $y\in S_m$ with $n<m$. Then
$$
d(x,y) \geq d(x,0)-d(y,0) \geq 2^{-(N_n+1)} - 3\cdot 2^{-(N_m+1)} \geq 2^{-(N_n+1)} - 3\cdot 2^{-(N_n+3)} = 2^{-(N_n+3)}
$$
while
$$
d(x,0)+d(y,0) \leq 3\cdot 2^{-(N_n+1)} + 3\cdot 2^{-(N_m+1)} \leq 2\cdot 3\cdot 2^{-(N_n+1)} = 24\cdot 2^{-(N_n+3)}
$$
thus (iii) is satisfied with $\lambda=24$.

This finishes the construction, and the proof.
\end{proof}

\begin{remark}
If we only want to show that $\Lip_0(X)\sim\Lip_0(K)$ for some compact $K\subset X$, without requiring it to be convex or a Lipschitz retract, then we may use a simplified construction that does not depend on the results in \cite{HM23}. For instance, pick points $x_n\in E_n$ with $\norm{x_n}=2^{-2n}$ and consider the balls $S_n=x_n+2^{-(2n+1)}B_{E_n}$. Then $K=\cl{\bigcup_n S_n}=\bigcup_n S_n\cup\set{0}$ is compact (but not convex, nor a Lipschitz retract of $X$). The verification that $\bigcup_n S_n$ is weak$^*$ Lipschitz extendable and that $\Lip_0(X)\sim\Lip_0(K)$ stays essentially the same. We still require the existence of a Schauder decomposition in order to verify condition (i) in Theorem \ref{th:lip extendable infinite union}, and it has to be an FDD so that $K$ is compact.
\end{remark}

\subsection{Bundles of parallel subspaces}
\label{sec:bundles}

Our last result is independent of Section \ref{extensions}.
It is asked in \cite[Question 5]{Aliaga25} whether $\lipfree{\RR\times\ZZ}$ is isomorphic to $\lipfree{\RR^2}$. While we cannot answer that question, we can at least express the former in terms of standard Lipschitz-free spaces as $\lipfree{\RR\times\ZZ}\sim\lipfree{\RR}\oplus\lipfree{\ZZ^2}$. The same argument works, more generally, for higher dimensions.

\begin{proposition} \label{prop:bundles}
For $n,m\in\NN$, $\lipfree{\RR^n\times\ZZ^m}$ is isomorphic to $\lipfree{\RR^n}\oplus\lipfree{\ZZ^{n+m}}$.
\end{proposition}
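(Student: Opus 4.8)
The plan is to apply Pe\l czy\'nski's decomposition method to $\lipfree M$ and $Z:=\lipfree{\RR^n}\oplus\lipfree{\ZZ^{n+m}}$, where $M=\RR^n\times\ZZ^m$. Two preliminary facts set the stage. First, $Z$ is isomorphic to its own countable $\ell_1$-sum: $\lipfree{\RR^n}\sim\pare{\bigoplus_\NN\lipfree{\RR^n}}_1$ by Kaufmann's theorem (Lemma \ref{lm:kaufmann}), and since $\ZZ^{n+m}=\ZZ^n\times\ZZ^m$ is a net in $\RR^{n+m}$, Theorem \ref{th:net l1 sum} gives $\lipfree{\ZZ^{n+m}}\sim\pare{\bigoplus_\NN\lipfree{\ZZ^{n+m}}}_1$; hence $Z\sim\pare{\bigoplus_\NN Z}_1$. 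Second, $M$ is a closed subset of the doubling space $\RR^{n+m}$, so it has finite Nagata dimension and every subset of $M$ is (absolutely) weak$^*$ Lipschitz extendable in $M$ \cite{FG}; by Fact \ref{clipfact}(b) this yields $\lipfree S\compl\lipfree M$ for every $S\subseteq M$. It therefore suffices to prove the two complementations $Z\compl\lipfree M\compl Z$.

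The coordinate projection $(x,k)\mapsto(x,0)$ is a $1$-Lipschitz retraction of $M$ onto the central fiber $\RR^n\times\set{0}\equiv\RR^n$; its linearization is a norm-one projection $P$ of $\lipfree M$ with range $\lipfree{\RR^n}$, so $\lipfree M=\lipfree{\RR^n}\oplus V$ with $V=\ker P=\cl{\lspan}\set{\delta(x,k)-\delta(x,0):(x,k)\in M}$. Everything thus reduces to showing $V\sim\lipfree{\ZZ^{n+m}}$, for which (given the self-$\ell_1$-sum above) it is enough to establish $V\compl\lipfree{\ZZ^{n+m}}$ and $\lipfree{\ZZ^{n+m}}\compl V$. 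For the latter I would use the map $(j,k)\mapsto\delta(j,k)-\delta(j,0)$, which is $2$-Lipschitz from $\ZZ^{n+m}$ into $V$ (the $\RR^n$-coordinate is $1$-Lipschitz), and complement its linearization inside $V$ by composing the extension-operator projection of the net $\ZZ^{n+m}$ with $I-P$.

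The main obstacle is the reverse relation $V\compl\lipfree{\ZZ^{n+m}}$, equivalently $\lipfree M\compl Z$. The difficulty is geometric entanglement: two points lying far out along distinct fibers $\RR^n\times\set{k}$ are close to each other but far from the base point, so the fibers are never well-separated in the sense of Lemma \ref{lm:metric l1 sum} and $V$ does not split as an $\ell_1$-sum over $k$; moreover $V$ carries genuinely continuous information (the families $\set{\delta(x,k)-\delta(x,0):x\in\RR^n}$), which the target $\lipfree{\ZZ^{n+m}}$ must absorb. To handle this I would decompose $\lipfree M$ \`a la Kalton (Lemma \ref{lm:kalton decomposition}) into the free spaces of the balls $B_M(0,2^j)$ --- each comparable to a product of a Euclidean ball with a finite piece of the lattice --- and on each ball use the absolute weak$^*$ extendability coming from the doubling geometry to project the fiber data onto the lattice, reassembling the pieces via $\lipfree{\ZZ^{n+m}}\sim\pare{\bigoplus_\NN\lipfree{\ZZ^{n+m}}}_1$. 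With both complementations in place, Pe\l czy\'nski's method gives $V\sim\lipfree{\ZZ^{n+m}}$ and hence $\lipfree{\RR^n\times\ZZ^m}\sim\lipfree{\RR^n}\oplus\lipfree{\ZZ^{n+m}}$; the same scheme works verbatim for all $n,m$.
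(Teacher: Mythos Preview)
Your scaffolding (Pe\l czy\'nski applied to $\lipfree{M}$ and $Z$, with $Z\sim(\bigoplus_\NN Z)_1$ via Lemma \ref{lm:kaufmann} and Theorem \ref{th:net l1 sum}) matches the paper's, but both complementation arguments have problems. For $\lipfree{\ZZ^{n+m}}\compl V$, the map $\Phi:\delta(j,k)\mapsto\delta(j,k)-\delta(j,0)$ is not injective: it annihilates all of $\lipfree{\ZZ^n\times\set{0}}$, so its range is a quotient of $\lipfree{\ZZ^{n+m}}$, not an isomorphic copy. This direction is easily repaired, however, and the paper does it without ever introducing $V$: one simply exhibits a subset $S\subset M$ with $\lipfree{S}\sim Z$ (take $S=B'\cup\ZZ^{n+m}$ where $B'$ is a small $n$-ball disjoint from the lattice, and use Lemmas \ref{lm:kaufmann}, \ref{lm:metric l1 sum}, \ref{lm:cdw}) and then invokes doubling extendability.

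The substantive gap is the hard direction $\lipfree{M}\compl Z$. Your Kalton plan cannot work as written: the balls $B_M(0,2^j)$ have \emph{growing} radii, so each contains the cube $[-2^j,2^j]^n\times\set{0}$ and hence $\lipfree{\RR^n}\compl\lipfree{B_M(0,2^j)}$. Since $\lipfree{\ZZ^{n+m}}$ has the Radon--Nikod\'ym property and $\lipfree{\RR^n}$ does not, there is no way to ``project the fiber data onto the lattice'' and obtain $\lipfree{B_M(0,2^j)}\compl\lipfree{\ZZ^{n+m}}$; the continuous information must go into the $\lipfree{\RR^n}$ summand of $Z$, but then your reassembly via $\lipfree{\ZZ^{n+m}}\sim\bigoplus\lipfree{\ZZ^{n+m}}$ no longer applies, and you have not explained how to control the $\lipfree{\RR^n}$ pieces uniformly. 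The paper sidesteps this by invoking Gartland's lemma (Lemma \ref{lm:gartland}) with $S=\ZZ^{n+m}$ as a maximal $1$-separated subset of $M$: this yields $\lipfree{M}\compl\lipfree{\ZZ^{n+m}}\oplus(\bigoplus_{x\in\ZZ^{n+m}}\lipfree{B(x,r)\cap M})_1$ for a \emph{fixed} $r$, so each $B(x,r)\cap M$ is a finite union of bounded, well-separated $\RR^n$-pieces and its free space is isomorphic to $\lipfree{\RR^n}$. This fixed-radius decomposition is the missing idea.
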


Proposition \ref{prop:bundles} also holds trivially if $n=0$. The case $m=0$, that is $\lipfree{\RR^n}\oplus\lipfree{\ZZ^n}\sim\lipfree{\RR^n}$, follows easily from Pe\l czy\'nski's method as $\lipfree{\ZZ^n}\compl\lipfree{\RR^n}$ and $\lipfree{\RR^n}\sim(\bigoplus_\NN\lipfree{\RR^n})_1$. For the general case, we will need the following auxiliary lemma by Gartland \cite[Lemma 6.2]{Gartland25}.

\begin{lemma}[Gartland]\label{lm:gartland}
Let $M$ be a metric space with finite Nagata dimension. Then there exists $r\in (0,\infty)$ such that $\lipfree{M}\compl\lipfree{S}\oplus\pare{\bigoplus_{x\in S}\lipfree{B(x,r)}}_1$ for every maximal $1$-separated subset $S\subset M$.
\end{lemma}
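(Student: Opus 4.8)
The plan is to realise $\lipfree{M}$ as a complemented subspace of $Z:=\lipfree{S}\oplus\pare{\bigoplus_{x\in S}\lipfree{B(x,r)}}_1$ by producing operators $T\colon\lipfree{M}\to Z$ and $P\colon Z\to\lipfree{M}$ with $PT=\mathrm{id}_{\lipfree{M}}$. First note that by maximality $S$ is a $1$-net: it is $1$-separated by hypothesis, and $1$-dense since otherwise a farther point could be adjoined. Take the base point of $M$ in $S$, and let each summand $\lipfree{B(x,r)}$ carry the base point $x$. The reassembly operator $P$ is the obvious one: on $\lipfree{S}$ it is the canonical isometric embedding induced by $S\hookrightarrow M$, and on the $x$-th summand it is induced by the base-point-preserving inclusion $(B(x,r),x)\hookrightarrow(M,x)$, sending $\delta(y)\mapsto\delta(y)-\delta(x)$. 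Each of these is non-expansive, so $P$ is bounded (indeed of norm $1$ for the natural $\ell_1$-norm on $Z$), and it is well defined on the $\ell_1$-sum since $\sum_x\norm{\mu_x}<\infty$ forces the corresponding sum of images to converge in $\lipfree{M}$.

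The operator $T$ is the linearisation of a single Lipschitz map built from a partition of unity; this is where finite Nagata dimension enters. That hypothesis furnishes, for the $1$-net $S$, a family $\set{\phi_x}_{x\in S}$ with $\supp\phi_x\subset B(x,r)$, $\sum_{x\in S}\phi_x\equiv1$, a uniform Lipschitz bound $\lipnorm{\phi_x}\leq L$, and bounded multiplicity $\#\set{x:\phi_x(m)>0}\leq N$ at every $m\in M$, where $r,L,N$ depend only on the Nagata dimension and its constant (and hence not on which maximal $1$-separated set $S$ is chosen). Define $\Psi\colon M\to Z$ by
$$
\Psi(m)=\Big(\textstyle\sum_{x\in S}\phi_x(m)\,\delta(x),\ \big(\phi_x(m)\,\delta(m)\big)_{x\in S}\Big),
$$
reading $\delta(x)\in\lipfree{S}$ in the first coordinate and $\delta(m)\in\lipfree{B(x,r)}$ in the $x$-th coordinate (a summand that vanishes unless $\phi_x(m)>0$, in which case $m\in B(x,r)$). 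Bounded multiplicity makes each $\Psi(m)$ a genuine element of $Z$. Since $\sum_x\phi_x(m)=1$, applying $P$ gives $P\Psi(m)=\sum_x\phi_x(m)\delta(x)+\sum_x\phi_x(m)\big(\delta(m)-\delta(x)\big)=\delta(m)$; thus, setting $T\delta(m)=\Psi(m)-\Psi(0)$ and linearising, one gets $PT\delta(m)=\delta(m)-\delta(0)=\delta(m)$, i.e. $PT=\mathrm{id}$, provided $\Psi$ is Lipschitz.

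The Lipschitz estimate for $\Psi$ is the crux, and I expect it to be the main obstacle. Split $\norm{\Psi(m)-\Psi(m')}_Z$ into the coarse part $\mu:=\sum_x(\phi_x(m)-\phi_x(m'))\delta(x)\in\lipfree{S}$ and the fine part $(\nu_x)_x$ with $\nu_x:=\phi_x(m)\delta(m)-\phi_x(m')\delta(m')$. The fine part is routine: for each active index $\norm{\nu_x}\leq\phi_x(m)\,d(m,m')+\abs{\phi_x(m)-\phi_x(m')}\,r$ (checking separately the case where only one of $m,m'$ lies in $B(x,r)$), so summing over the at most $2N$ active indices with $\sum_x\phi_x(m)=1$ and $\lipnorm{\phi_x}\leq L$ yields $\norm{(\nu_x)_x}_1\leq(1+2NLr)\,d(m,m')$. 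The coarse part is the delicate one and is exactly where bounded multiplicity is indispensable. Note $\mu$ is \emph{balanced}: $\sum_x(\phi_x(m)-\phi_x(m'))=0$. For $d(m,m')\leq r$ the active set $A:=\set{x:\phi_x(m)>0\text{ or }\phi_x(m')>0}$ has $\#A\leq2N$ and $\diam(A)\leq 3r$, so choosing any $x_0\in A$ and writing $\mu=\sum_{x\in A}(\phi_x(m)-\phi_x(m'))(\delta(x)-\delta(x_0))$ (legitimate by balancedness) gives $\norm{\mu}_{\lipfree{S}}\leq\sum_{x\in A}\abs{\phi_x(m)-\phi_x(m')}\,d(x,x_0)\leq 6NLr\,d(m,m')$. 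For $d(m,m')>r$ one instead uses that the free-space norm of a balanced measure equals the transport (Kantorovich--Rubinstein) distance between the probability measures $P_m=\sum_x\phi_x(m)\delta(x)$ and $P_{m'}$, which are supported in $B(m,r)$ and $B(m',r)$; the trivial coupling bounds this by $d(m,m')+2r\leq3\,d(m,m')$. Combining the regimes, $\norm{\mu}_{\lipfree{S}}\leq\max\set{6NLr,3}\,d(m,m')$.

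Putting the two parts together, $\lipnorm{\Psi}\leq K$ with $K$ depending only on $N,L,r$, hence only on the Nagata data. Therefore $T$ is a bounded operator with $PT=\mathrm{id}_{\lipfree{M}}$, so $TP$ is a projection of $Z$ onto a copy of $\lipfree{M}$; this is precisely the assertion $\lipfree{M}\compl Z$, with constant independent of $S$. The only substantive inputs beyond bookkeeping are the partition of unity with supports in $B(x,r)$, uniform Lipschitz constant, and bounded multiplicity (the standard geometric consequence of finite Nagata dimension, cf. the construction underlying the extendability results recalled in Section \ref{extensions}), together with the transport estimate controlling the balanced coarse measure $\mu$ in $\lipfree{S}$---the multiplicity bound being exactly what forces the small-scale coarse estimate to be linear in $d(m,m')$.
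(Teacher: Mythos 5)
Your proof is correct. Note first that the paper itself does not prove this lemma: it is quoted verbatim from Gartland \cite[Lemma 6.2]{Gartland25}, so there is no in-paper argument to compare against; your construction (a Lipschitz partition of unity $\set{\phi_x}_{x\in S}$ at the net scale, the map $\Psi(m)=\pare{\sum_x\phi_x(m)\delta(x),(\phi_x(m)\delta(m))_x}$, and the factorization $PT=\mathrm{id}$) is the natural proof and is essentially the mechanism behind Gartland's original argument. I checked the details: the reassembly operator $P$ is norm one on the $\ell_1$-sum; $P\Psi(m)=\delta(m)$ follows from $\sum_x\phi_x(m)=1$; the fine-part estimate $(1+2NLr)\,d(m,m')$ is right, including the boundary case where only one of $m,m'$ lies in $B(x,r)$ (there $\norm{\nu_x}=\phi_x(m)\,d(m,x)\leq\abs{\phi_x(m)-\phi_x(m')}\,r$, consistent with your unified bound); the coarse part is correctly handled by the two-regime split, with $\diam(A)\leq 3r$ valid when $d(m,m')\leq r$ and the product coupling giving $d(m,m')+2r\leq 3\,d(m,m')$ when $d(m,m')>r$ (the Kantorovich--Rubinstein upper bound via an explicit coupling is all you need, so even the claimed equality is more than required). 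The one input you quote rather than prove --- a partition of unity indexed by $S$ with $\supp\phi_x\subset B(x,r)$, uniform Lipschitz constant $L$, and multiplicity $N$, all depending only on the Nagata data --- is indeed standard (it is the Lang--Schlichenmaier construction; one obtains it from the Nagata covers and then reassigns each cover element to a nearby net point, summing the corresponding functions, which preserves all three uniform bounds since at any pair of points only boundedly many summands are active). With that citation made explicit, your argument is a complete, self-contained proof, and it correctly yields a complementation constant independent of the choice of maximal $1$-separated set $S$, as the lemma requires.
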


\begin{proof}[Proof of Proposition \ref{prop:bundles}]
First, let $B$ a ball in $\RR^n$ that does not intersect $\ZZ^n$, and let $B'=B\times\set{(0,\stackrel{m}{\ldots},0)}\subset\RR^{n+m}$. Then we have
$$
\lipfree{\RR^n}\oplus\lipfree{\ZZ^{n+m}} \sim \lipfree{B'}\oplus\lipfree{\ZZ^{n+m}} \sim \lipfree{B'\cup\ZZ^{n+m}}\oplus\ell_1^2 \sim \lipfree{B'\cup\ZZ^{n+m}}
$$
by Lemmas \ref{lm:kaufmann}, \ref{lm:metric l1 sum} and \ref{lm:cdw}, respectively; we may apply Lemma \ref{lm:metric l1 sum} as $B'$ and $\ZZ^{n+m}$ are easily seen to be well-separated because $B'$ is bounded and separated from $\ZZ^{n+m}$ by a positive distance. Since the set $B'\cup\ZZ^{n+m}$ is a subset of $\RR^n\times\ZZ^m$ in the doubling metric space $\RR^{n+m}$, we deduce
$$\lipfree{\RR^n}\oplus\lipfree{\ZZ^{n+m}}\compl\lipfree{\RR^n\times\ZZ^m} .
$$

For the converse, equip $\RR^{n+m}$ with the $\ell_\infty$-norm metric. Then $\ZZ^{n+m}$ is a maximal $1$-separated subset of $\RR^n\times\ZZ^m$, so by Lemma \ref{lm:gartland} there exists $r\in (0,\infty)$ such that
$$
\lipfree{\RR^n\times\ZZ^m} \compl \lipfree{\ZZ^{n+m}}\oplus\pare{\bigoplus_{x\in\ZZ^{n+m}}\lipfree{B(x,r)\cap (\RR^n\times\ZZ^m)}}_1 .
$$
Note that all sets $B(x,r)\cap(\RR^n\times\ZZ^m)$, $x\in\ZZ^{n+m}$ are isometric and we may just write
$$
\lipfree{\RR^n\times\ZZ^m} \compl \lipfree{\ZZ^{n+m}}\oplus\pare{\bigoplus_\NN\lipfree{rB_{\RR^{n+m}}\cap (\RR^n\times\ZZ^m)}}_1 .
$$
Since $r<\infty$, the set $rB_{\RR^{n+m}}\cap (\RR^n\times\ZZ^m)$ is a union of finitely many (say $k$) isometric, parallel subsets of $\RR^n$, separated by a positive distance (hence well-separated), each of which has non-empty interior. Thus
$$
\lipfree{rB_{\RR^{n+m}}\cap (\RR^n\times\ZZ^m)} \sim  \lipfree{\RR^n}\oplus\ldots\oplus\lipfree{\RR^n}\oplus\ell_1^k \sim \lipfree{\RR^n}\oplus\ell_1^k \sim \lipfree{\RR^n}
$$
by Lemmas \ref{lm:metric l1 sum}, \ref{lm:kaufmann} and \ref{lm:cdw}. It follows that $\lipfree{\RR^n\times\ZZ^m} \compl \lipfree{\ZZ^{n+m}}\oplus\lipfree{\RR^n}$, using Lemma \ref{lm:kaufmann} again.

Finally, both $\lipfree{\RR^n}$ and $\lipfree{\ZZ^{n+m}}$ are isomorphic to their countable $\ell_1$-sums (e.g. by Lemma \ref{lm:kaufmann} and Theorem \ref{th:net l1 sum}), so we may apply Pe\l czy\'nski's method to conclude.
\end{proof}

\section*{Acknowledgments}

This research was carried out during visits of the first author to the Universidad P\'ublica de Navarra and of the second author to the Universitat Polit\`ecnica de Val\`encia in 2025. Both authors would like to thank the respective hosting institutions for their excellent conditions. The first author would also like to thank Gilles Lancien for some insights into the topic of grids in $\ell_1$. We also thank the anonymous referee for spotting a subtle issue in the proof of Theorem \ref{th:lip extendable infinite union}.

R. J. Aliaga was partially supported by Grant PID2021-122126NB-C33 and R. Medina has been supported by Grant PID2021-122126NB-C31, both funded by MICIU/AEI/10.13039/501100011033 and by ERDF/EU.


\end{document}